\documentclass{amsart}
\usepackage{amssymb}
\usepackage[all]{xy}
\newcommand\datver[1]{\def\datverp%
 {\par\boxed{\boxed{\text{#1; Run: \today}}}}}
\setlength{\marginparwidth}{0.72in}

\newcommand\boxb[1]{\square_b}

\newcommand\ff{\operatorname{ff}}

\numberwithin{equation}{section}

\newcommand\paperbody%
        {
        
        }
\setcounter{tocdepth}{1}
\renewcommand{\geq}{\geqslant} \renewcommand{\leq}{\leqslant}
\newtheorem{lemma}{Lemma}
\newtheorem{proposition}[lemma]{Proposition}
\newtheorem{corollary}[lemma]{Corollary}
\newtheorem{theorem}[lemma]{Theorem}
\newtheorem{non-theorem}{Non-Theorem}

\theoremstyle{remark}
\newtheorem{definition}[lemma]{Definition}
\newtheorem{remark}[lemma]{Remark}

\numberwithin{lemma}{section}

\newcommand\Sm{\operatorname{Sm}}
\newcommand\Bi{\operatorname{Bi}}
\newcommand\Nc{\operatorname{Nc}}

\newcommand\ver{\mathrm{v}}
\newcommand\nver{\mathrm{nv}}

\newcommand\scat{\operatorname{sc}}

\newcommand\Vb{\mathcal{V}_{\operatorname{b}}}
\newcommand\bV{\mathcal{V}_{\operatorname{b}}}

\newcommand\bo{\operatorname{b}}

\newcommand\tot{\operatorname{tot}}

\newcommand\bT{{}^{\bo}T}


\newcommand\coF{{}^{\mathcal{C}}\kern-2pt\Lambda}

\newcommand\cFTs{{}^{\Phi}\overline{T}\kern-1pt{}^*}
















\hyphenation{para-met-rix}

\newcommand\tB{\tilde{B}}
\newcommand\tI{\tilde{I}}
\newcommand\tJ{\tilde{J}}





%
%
\newcommand\fa{\mathfrak{a}}
\newcommand\fb{\mathfrak{b}}
\newcommand\fc{\mathfrak{c}}

%
%

\newcommand\cB{\mathcal{B}}
\newcommand\cD{\mathcal{D}}
\newcommand\cE{\mathcal{E}}

\newcommand\cH{\mathcal{H}}
\newcommand\caH{\mathcal{H}}
\newcommand\cG{\mathcal{G}}

\newcommand\cM{\mathcal{M}}
\newcommand\cP{\mathcal{P}}



\newcommand\bbN{\mathbb N}

\newcommand\bbR{\mathbb R}
\newcommand\bbS{\mathbb S}

\newcommand\cC{\mathcal C}

\newcommand\CI{{\mathcal{C}}^{\infty}}

\newcommand\Diag{\operatorname{Diag}}





\newcommand\cFNs{{}^{\Phi}\overline N\kern-1pt{}^*}

\newcommand\codim{\operatorname{codim}}

\newcommand\pa{\partial}

\newcommand\Mand{\text{ and }}

\newcommand\Meither{\text{ either }}

\newcommand\Mif{\text{ if }}

\newcommand\Min{\text{ in }}

\newcommand\Mor{\text{ or }}
\newcommand\Morif{\text{ or if }}

\newcommand\Mst{\text{ s.t. }}

\newcommand\Mthen{\text{ then }}

\newcommand\Mwith{\text{ with }}
\newcommand\ob{\operatorname{ob}}

\newcommand\ncnt{n\@.c\@.n\@.t\@.}

\datver{1.0B; Revised: 14-8-2008}
\begin{document}
\title{Scattering configuration spaces}

\author{Richard Melrose}
\address{Department of Mathematics, Massachusetts Institute of Technology}
\email{rbm@math.mit.edu}
\author{Michael Singer}
\address{School of Mathematics, University of Edinburgh}
\email{M.Singer@ed.ac.uk}
\dedicatory{\datverp}
\begin{abstract} For a compact manifold with boundary $X$ we introduce the
  $n$-fold scattering stretched product $X^n_{\scat}$ which is a compact
  manifold with corners for each $n,$ coinciding with the previously known
  cases for $n=2,3.$ It is constructed by iterated blow up of boundary faces and
  boundary faces of multi-diagonals in $X^n.$ The resulting space is shown to map
  smoothly, by a b-fibration, covering the usual projection, to the lower
  stretched products. It is anticipated that this manifold with corners, or
  at least its combinatorial structure, is a universal model for phenomena
  on asymptotically flat manifolds in which particle clusters emerge at
  infinity. In particular this is the case for magnetic monopoles on
  $\bbR^3$ in which case these spaces are closely related to
  compactifications of the moduli spaces with the boundary faces mapping to
  lower charge idealized moduli spaces.
\end{abstract}
\maketitle

\tableofcontents

\section*{Introduction}

One of the natural tools for the study of asymptotically
translation-invariant phenomena on Euclidean spaces is the passage to the
`radial compactification' $X=\overline{\bbR^m} = \bbR^m \cup
S^{m-1}_\infty.$ This translates asymptotic behaviour to behaviour at the
boundary of $X$ and allows similar phenomena to be considered on arbitrary
compact manifolds with boundary, in terms of the intrinsic scattering
structure at the boundary. In this approach, emphasized in
\cite{MR95k:58168}, typical kernels and functions, such as Euclidean
distance, $d(z_1,z_2)^2= |z_1-z_2|^2$ which are quite singular near the
corner of the compact space $X\times X$ are resolved to `normal crossings',
i\@.e\@.~conormal singularities, by lifting to the the scattering stretched
product $X^2_{\scat}.$ This space is obtained by iterated real blow-up of $X^2.$
The corresponding triple product $X^3_{\scat}$ has also been discussed and
here we consider the `higher scattering products' of an arbitrary compact
manifold with boundary. These inherit the permutation invariance of $X^n$
and, apart from their construction, the most important result here is
that the projections onto smaller products also lift to be smooth
b-fibrations, giving
\begin{equation}
\xymatrix{
X^n_{\scat}\ar@<2ex>[r]_-{\cdots}
\ar@<-2ex>[r]^-{\cdots}&
X^{n-1}_{\scat}\quad \ldots\quad
X^4_{\scat}\ar@<1.5ex>[r]\ar@<.5ex>[r]\ar@<-.5ex>[r]\ar@<-1.5ex>[r]
&X^3_{\scat}\ar@<1ex>[r]\ar[r]\ar@<-1ex>[r]
&X^2_{\scat}\ar@<.5ex>[r]\ar@<-.5ex>[r]
&X.
}
\label{24.5.2008.134}\end{equation}
It is our basic contention that the spaces $X^n_{\scat},$ despite the
apparent complexity or their definition, are universal for asymptotically
translation-invariant phenomena.

There is of course a relation between the spaces considered here and the
seminal work of Fulton and MacPherson, \cite{MR95j:14002}. This
relationship is strongest at a combinatorial level but the differences are
also quite substantial. Apart from the distinction between real and complex
spaces in the two settings, it should be noted that all the blow ups
carried out here are in the boundary. As a result the space $X^n_{\scat}$
can be retracted to $X^n$ and in this sense the topology has not changed at
all. These spaces are designed to give `more room' for geometric and
analytic objects.

One justification for the introduction of these `higher' stretched products
is that they are anticipated to serve as at least combinatorial models for
a natural compactification of the moduli space of magnetic monopoles on
$\bbR^3$ and allow the detailed asymptotic description of the
hyper-K\"ahler metric. This will be shown elsewhere and is closely related
to the existence of the maps in \eqref{24.5.2008.134}. Given the
permutation-invariance of the spaces, the existence of these maps can be
reduced to one case in each dimension and then corresponds to a commutative
diagramme
\begin{equation}
\xymatrix{
X^n_{\scat}\ar[r]\ar@/^2pc/[rr]^{\pi_{n,\scat}}\ar[d]
&
X^{n-1}_{\scat}\times X\ar[r]
&
X^{n-1}_{\scat}\ar[d]\\\
X^n\ar^{\pi_n}[rr]
&
&
X^{n-1}
}
\label{ScatProd.102}\end{equation}
where $\pi_{n,\scat}$ is the new map and $\pi_n$ is projection off the last
factor. In fact this lifted map is a b-fibration, meaning in particular
that push-forward under it of a function with complete asymptotic expansion
(so in essence `smooth' up to the boundary faces) again has such an
expansion.

In the rest of this Introduction, we give an outline of this construction,
deferring the proofs to the main body of the article. Given their
fundamentally combinatorial nature the constructions here also extend to
the more general `fibred-cusp' configuration spaces $X^n_{\Phi},$
corresponding to the double and triple spaces introduced by Mazzeo and the
first author in \cite{MR2000m:58046}. These arise naturally when $\pa X$
comes with a fibration over a space $Y.$ The scattering case appears when
this fibration is the identity map $\pa X \longrightarrow\pa X.$

The main issue in what follows is the fundamental fact about iterated
blow-ups, which is that {\em different orders generally lead to
  non-diffeomorphic spaces}.  Thus while it is clear which submanifolds of
the boundary must be blown up in the construction of $X^n_{\scat},$ the
order in which this is done has to be specified. Moreover, having
determined this order, the existence of the map $\pi_{n,\scat}$ in
\eqref{ScatProd.102} corresponds to the possibility of obtaining the same
space by a performing these blow ups in a different order. In a manifold
with corners, $M,$ such as $X^n$ and the manifolds obtained by subsequent blow
up from it, admissible `centres' of blow up, $H\subset M,$ which is to say
manifolds which have a collar neighbourhood, are called p-submanifolds (for 
product-) and are always required here to be closed. The blow up of a
p-submanifold is then always possible and is denoted, with its blown-down map,
\begin{equation}
\beta: [M;H]\longrightarrow M.
\label{ScatProd.171}\end{equation}

Thus we are interested in the circumstances in which two
p-submanifolds $H_1$ and $H_2$ of a manifold with corners \emph{commute} in
the sense that the natural identification of the complement of the preimage
of $H_1\cup H_2$ in the blown up spaces extends to a smooth diffeomorphism
allowing us to identify
\begin{equation}
[M;H_1;H_2] = [M;H_2;H_1].
\label{e1.29.4.8}\end{equation}
Here, and throughout the paper, we have identified submanifolds with their
lifts to the blow-up. The lift of $H_2$ to $[X;H_1]$ is the closure in
$[X;H_1]$ of $\beta_1^{-1}(H_2\setminus H_1)$ if this is non-empty and
is $\beta_1^{-1}(H_2)$ in the opposite case (i.e. if $H_2\subset H_1$). 
In particular to conclude \eqref{e1.29.4.8} we need to know that
each lifts to be a p-submanifold under the blow-up of the other.  In fact, 
as is well-known, $H_1$ and $H_2$ commute in this sense if and only if they
are either transversal (including the case that they are disjoint) or
\emph{comparable}, meaning either $H_1\subset H_2$ or $H_2\subset H_1.$
To prove our results, we need to show that whole families of blow-ups
commute and this is where the combinatorial complexity lies.

\subsection{Boundary products} The `scattering structure' on a manifold
with corners can be identified with the intrinsic Lie algebra of
`scattering vector fields' consisting precisely of the products $fV$ where
$V\in\bV(X),$ meaning that $V$ is a smooth vector field which is tangent
to the boundary, and $f\in\CI(X)$ vanishes at the boundary. The larger Lie
algebra $\Vb(X)$ is the `boundary (b-) structure'; it can also be thought of as
representing the asymptotic multiplicative structure near the boundary or
geometrically as the `cylindrical end' structure on the manifold.

Not surprisingly then, our construction begins from the corresponding
$n$-fold stretched b-product $X^n_{\bo}.$ This resolves, near the diagonal
in the boundary, the pairwise distance functions (and of course much more
besides) for a cylindrical-end metric, also called a b-metric, on $X.$
Assuming, for simplicity and from now on, that $\partial X$ is connected,
$X^n_{\bo}$ is obtained from $X^n$ by the blow up of all boundary faces.

In order to describe the construction, consider the collection of all
boundary faces, $\cM(X^n),$ of $X^n.$ Blowing up boundary hypersurfaces of
a manifold with corners does noting, so let $\cB_{\bo}\subset\cM(X^n)$ be
the subset of boundary faces of $X^n$ of codimension at least $2;$ if $\pa
X$ is not connected $\cB_{\bo}$ is a smaller collection.

As standard notation we shall not distinguish between 
boundary faces of $X^n$ and their lifts to blown up versions of the
manifold, except where absolutely necessary.   
\begin{definition}
The b-stretched products of $X,$ $X^n_{\bo}$ are defined to be 
\begin{equation}
X^n_{\bo} = [X^n; \cB_{n}; \cB_{n-1}; \ldots ; \cB_{2}].
\label{ScatProd.172}\end{equation}
where $\cB_{r}\subset\cM(X^n)$ is the collection of boundary faces of
codimension $r.$ 
\end{definition}
\noindent Note the contracted notation in \eqref{ScatProd.172} for iterated
blow ups.  Since we have not specified an ordering of elements within each
of the families $\cB_r,$ it is implicit that the result does not depend on
these choices. In fact at the stage at which the elements of $\cB_{r}$ are
blown up they are disjoint so the order is immaterial and $X^n_{\bo}$ is
well-defined. It also follows from this that the permutation group lifts to
$X^n_{\bo}$ as diffeomorphisms.

Consider the analogue of \eqref{ScatProd.102} in this simpler setting:
\begin{equation}
\xymatrix{
X^n_{\bo}\ar[r]\ar@/^2pc/[rr]^{\pi_{n,\bo}}\ar[d]
&
X^{n-1}_{\bo}\times X\ar[r]
&
X^{n-1}_{\bo}\ar[d]\\\
X^n\ar^{\pi_n}[rr]
&
&
X^{n-1}
}
\label{ScatProd.104}\end{equation}
To show the existence of $\pi_{n,\bo}$ we divide $\cB_r$ into two pieces,
the {\em vertical} and {\em non-vertical} boundary faces (with respect to
the projection $\pi_n).$ Namely $\cB_r^{\ver}$ consists of those boundary
faces $B$ of $X^n$ of codimension $r$ such that the $n$-th factor of
$B$ is $X.$ Similarly, $\cB_r^{\nver}$ consists of those boundary faces $B$
of $X^n$ of codimension $r$ such that the $n$-th factor of $B$ is $\pa
X.$ Thus $B\in\mathcal{B}^{\ver}_{\bo}$ if $B=B'\times X$ with
$B'\in\mathcal{M}(X^{n-1}),$ i\@.e\@. $B=\pi_n(B)\times X.$ Otherwise the
$n$th factor is necessarily $\pa X$ and then
$B\in\mathcal{B}^{\nver}_{\bo}.$ Equivalently, the vertical boundary faces
are those that are unions of fibres of $\pi_n.$ 

Now observe that
\begin{equation}
X^{n-1}_{\bo}\times X=[X^n;\cB^{\ver}_{n-1};\ldots;\cB^{\ver}_{2}]
\label{24.5.2008.106}
\end{equation}
since the last factor of $X$ is unchanged throughout. Thus the existence of
$\pi_{n,\bo}$ in \eqref{ScatProd.104} follows if we show that the
non-vertical boundary faces can all be commuted to come last and hence that
$$
X^n_{\bo}=[X^{n-1}_{\bo} \times X;\cB_{n}^\nver;\cB_{n-1}^\nver;\ldots;\cB_{2}^\nver],
$$
so exhibiting $X^n_{\bo}$ is an iterated blow-up of $X^{n-1}_{\bo}\times X.$

\subsection{Multi-diagonals}\label{Sect-Md}

The space $X^n_{\scat}$ is constructed from $X^n_{\bo}$ by the blow up of
the intersections of the (lifts of the) `multi-diagonals' in $X^n$ with the
various boundary components of $X^n_{\bo},$ again with strong restrictions
on the order in which this is done. The total diagonal $\Diag\subset X^m$
is diffeomorphic to $X$ and the \emph{simple diagonals} in $X^n$ are the
images of $\Diag(X^m)\times X^{n-m}$ under the factor permutation maps. The
multi-diagonals (later called simply diagonals) are the intersections of
these simple diagonals. Since we are assuming the boundary of
$X$ to be connected, we can identify a simple diagonal, involving equality
for some collection of factors, with the boundary face $B\in\cB_{(2)}$
which has a factor of $\pa X$ exactly in each of the factors involving
equality. Then the multi-diagonals $D_{\fb}$ can be identified with
transversally-intersecting subsets $\fb\subset\cB_{(2)},$ meaning that
different elements do not have a boundary factor in common.

It is important to understand that the diagonals are not p-submanifolds in
$X^n.$ Nor, in general, are their boundary faces (which is what we are most
interested in). Indeed there are always points analogous to the boundary of
the diagonal in $[0,1]^2$ and hence they do not have a local product structure
consistent with that of the manifold. However, the effect of the
construction above is to resolve these `singularities'.
\begin{equation}
\text{The lift from }X^n\text{ to }X^n_{\bo}\text{ of each }D_{\fb}\text{
  is a p-submanifold}
\end{equation}
Now, the lift of $D_{\fb}$ will generally meet many boundary faces of 
$X^n_{\bo}.$ In particular it meets the lift of every boundary face
$B\in\cB_{(2)}$ with $B\subset\cap\fb$ and these intersections are the
`boundary diagonals' which are to be blown up. Thus,
(recalling that the lift of $B\in\cB_{(2)}$ to $X^n_{\bo}$ is also denoted
simply as $B)$ set
\begin{equation}
\caH =\{H_{B,\fb}=B\cap D_{\fb}\subset X^n_{\bo};B\in \cB\Mwith B \subset\cap\fb\}.
\end{equation}

To blow up all these submanifolds we need to choose an order and this is
required to respect the `lexicographic' partial ordering of $\caH$  
\begin{equation}
H_{A,\fa} \leq H_{B,\fb}\Longleftrightarrow D_{\fa}\subset D_{\fb}\Morif
D_{\fa}=D_{\fb}\Mthen A\subset B.
\label{ScatProd.174}\end{equation}
Then the scattering configuration space $X^n_{\scat}$ is defined to be
\begin{equation}
X^n_{\scat}=[X^n_{\bo};\caH].
\label{ScatProd.175}\end{equation}
with blow up in such an order. Of course it needs to be checked that the
result is independent of the choice of order consistent with
\eqref{ScatProd.174}. As in the case of $X^n_{\bo}$ follows from the fact
that the changes in order correspond to transversal intersections 
(including disjointness).

As noted above, these results are already known in the cases $n=2$ and
$n=3.$ Two new phenomena make the general case more complicated. The first
is the necessity to blow-up multi-diagonals with the first non-trivial
multi-diagonal occuring for $n=4.$ The second is the issue of the ordering
of $\caH.$

\subsection{Stretched projections}

As noted above, the existence of the `stretched projections' in
\eqref{24.5.2008.134} and \eqref{ScatProd.102} is the crucial property of
the spaces $X^n_{\scat}.$ The proof of their existence is in essence the
same as that outlined above, see \eqref{24.5.2008.106}, for the simpler
$\pi_{n,\bo}$ maps but the required commutation results are necessarily
more intricate. In particular we need to consider various spaces
intermediate between $X^n$ and $X^n_{\scat}$ which arise in this
argument. For instance, the p-submanifold $H_{A,\fa}\subset X^n_{\bo}$
actually makes sense already in $[X^n;\fa]$ from which it can be lifted
under further blow up. These issues are discussed extensively in the
article below, here we ignore such niceties to explain the procedure used
later.

The notion of vertical and non-vertical for boundary faces with respect
to the last factor discussed above extends to the boundary diagonals
$H_{A,\fa}.$ First extend it to transversal subsets $\fa\subset\cB_{(2)}$
where $\fa$ is vertical if and only if $\cap\fa$ is vertical.
Thus
\begin{equation}
\begin{gathered}
\caH_{\ver,\ver} = 
\{ H_{A,\fa} \in \caH;\fa\Mand B\text{ are vertical}\},\\
\caH_{\nver,\ver} =
\{ H_{A,\fa} \in \caH;\fa\text{ is vertical but }B\text{ is non-vertical}\},\\
\caH_{\nver,\nver} =
\{ H_{A,\fa} \in \caH;\fa\Mand B\text{ are non-vertical}\}.
\end{gathered}
\end{equation}
Notice that $A\subset\cap\fa$ in the definition of $H_{A,\fa}$ so if $\fa$
is non-vertical, then so is $A.$ From the definitions, it is clear that we have
\begin{equation}
X^{n-1}_{\scat} \times X
= [X^n; \cB^\ver; \caH_{\ver,\ver}]
\end{equation}
with the appropriate order on the blow ups. So the task is to recognize
$X^n_{\scat}$ as an iterated blow-up of this space. To do this, we first
show that all the `purely non-vertical' boundary diagonals can be blown
down so that, as always with appropriate orders on the collections of centres,
\begin{equation*}
\begin{gathered}
X^n_{\scat}
=[X^n_{\bo}; \caH_{\ver,\ver}\cup\caH_{\nver,\ver};\caH_{\nver,\nver}]\\
=[X^{n-1}_{\bo}\times X;\cB^{\nver};\caH_{\ver,\ver}\cup\caH_{\nver,\ver};
\caH_{\nver,\nver}].
\end{gathered}
\end{equation*}
Thus all the boundary faces of non-vertical diagonals are first removed.

To proceed further, we remove the `last' (which means originally largest)
boundary face from $B\in\cB^{\nver}$ by showing that it can be commuted
past all the subsequent boundary diagonals. Then all the $H_{B,\fa}$
corresponding to this boundary face are commuted out and blown down. Then
this procedure is iterated, at each step removing the last remaining
non-vertical boundary face and \emph{then} the boundary diagonals contained
in it.

The rest of this article is devoted to providing a rigorous discussion of
this outline. In \S\ref{mwc} material on manifolds with corners and blow up
is briefly recalled and in \S\ref{Bbu} the effect of the blow up of
boundary faces is considered. This is extended in \S\ref{Intord} to get a
basic result on the reordering of boundary blow up, which is used
extensively in the remainder of the article. In particular in
\S\ref{b-conf} the results described above for the boundary configuration
spaces $X^n_{\bo}$ are derived. The diagonals and their resolution is
examined in \S\ref{Multidiags} and the properties of these submanifolds are
slightly abstracted in \S\ref{D-coll} to aid the discussion of iterative
blow up. Collections of boundary diagonals are described in \S\ref{b-diag}
and used to construct the spaces $X^n_{\scat}$ in \S\ref{Scatcon}. Three
results on the reordering of blow ups of boundary diagonals are given in
\S\ref{Reorder} and these are used to carry out the construction of the
stretched projections in \S\ref{Sect-Ssp}. A simple application of these
spaces in \S\ref{Vspaces} is inserted to indicate why these resolutions
should prove useful.

\paperbody
\section{Manifolds with corners}\label{mwc}

Since we make heavy use of conventions for manifolds with corners , we give
a brief description of the basic results which are used below. These can
also be found in the \cite{daomwc}.

\subsection{Definition and boundary faces} By a manifold with corners we
shall mean a space modelled locally on products
$[0,\infty)^k\times\bbR^{n-k}$ with smooth transition maps (meaning they
  have smooth extensions across boundaries.) For such a space, $M,$
  $\CI(M)$ is well-defined by localization and at each point the boundary
  has a definite codimension, corresponding to the number, $k$ of functions
  in $\CI(M)$ vanishing at the point which are non-negative nearby and have
  independent differentials. We will insist that the boundary
  hypersurfaces, the closures of the sets of codimension $1,$ be
  embedded. This corresponds to the existence of functions $\rho
  _i\in\CI(M)$ which are everywhere non-negative and have $d\rho_i\not=0$
  on $\{\rho _i=0\}$ and such that $\pa M=\{\prod_i\rho _i=0\}.$ The
  connected components of the sets $\{\rho _i=0\}$ are the boundary
  hypersurfaces, the collection of which is denoted $\mathcal{M}_1(M).$ The
  components of the intersections of these hypersurfaces form boundary
  faces, all are closed and are the closures of their interiors the points
  of which have fixed codimension; thus $\mathcal{M}_k(M)$ consists of all
  the (connected) boundary faces of (interior) codimension $k$ and
  $\mathcal{M}_{(k)}(M)$ denotes the collection of codimension at least
  $k.$ By convention, we shall include $M\in\cM(M)$ as a `boundary face of
  codimension zero'.

Near a point of $M,$ where the boundary has codimension $k,$ it is
generally natural to use coordinates adapted to the boundary. That is,
local coordinates $x_i\ge0,$ $i=1,\dots,k$ and $y_j,$ $j=1,\dots,n-k$ where
the boundary hypersurfaces through the given point are the $\{x_i=0\}.$

If $B_1,$ $B_2\in\mathcal{M}(M)$ then their intersection is a boundary face
(possibly empty) but their union is not. However the union is contained in
a smallest boundary face which we will denote 
\begin{equation}
B_1\dotplus B_2=\bigcap\{B\in\mathcal{M}(M);B\supset B_1\cup B_2\}.
\label{24.5.2008.144}\end{equation}

\subsection{p-submanifolds} Embedded submanifolds of a manifold with
corners can be rather more complicated locally than in the boundaryless
case. The simplest type is a p-submanifold. This is a closed subset
$Y\subset M$ which has a local product decomposition near each point,
consistent with a local product decomposition of $M.$ An interior
p-submanifold (not necessarily contained in the interior) is distinguished
by the fact that locally in a neighbourhood $U$ of each of its points there
are $l$ independent functions $Z_i\in\CI(U)$ which define it and which are
independent of the local boundary defining functions, i\@.e\@.~it is
defined by the vanishing of interior coordinates. A general p-submanifold
is an interior p-submanifold of a boundary face. Any p-submanifold $Y$ of
$M$ can be locally put in {\em standard form} near a point $p$ in the sense
that there are adapted coordinates $x_i,$ $y_j,$ based at $p$ such that in the
coordinate neighbourhood $U$ 
\begin{equation}
U\cap Y = \{(x,y) \in U; x_j=0,\ 1\le j\le l,\ y_i = c_i\mbox{ for }i\in I\}
\label{24.5.2008.161}\end{equation}
where $I$ is some subset of the index set for interior coordinates and the
$c_i$ are constants.

\subsection{Blow-up and lifting of manifolds and maps}

The (radial) blow up of a p-submanifold is always well-defined and yields a
new compact manifold with corners $[M;Y]$ with a blow-down map $\beta
:[M;Y]\longrightarrow M$ which is a diffeomorphism from the complement of
the preimage of $Y$ to the complement of $Y.$

\begin{lemma}\label{ScatProd.61} Under blow up of a boundary face of a
manifold with corners, any p-submanifold $H$ lifts to a p-submanifold which
is contained in the lift of its boundary-hull, the smallest boundary face
containing $H.$
\end{lemma}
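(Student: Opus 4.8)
The plan is to reduce everything to a local computation in the projective coordinates that resolve the blow-up, since being a p-submanifold and the operation of lifting are both local, and away from the front face $\beta^{-1}(F)$ the blow-down $\beta$ is a diffeomorphism under which $H$ lifts to (a piece of) itself. Write $G$ for the boundary-hull of $H$, the smallest boundary face containing $H$; this is well-defined because any intersection of boundary faces is again a boundary face. Near a point $p$ of $M$ I would choose adapted coordinates $x_1,\dots,x_k\ge0$, $y_1,\dots,y_{n-k}$ so that $H$ is in the standard form \eqref{24.5.2008.161}, say $H=\{x_1=\dots=x_l=0,\ y_i=c_i,\ i\in I\}$; then locally $G=\{x_1=\dots=x_l=0\}$ and the face to be blown up is $F=\{x_i=0,\ i\in J\}$ for some $J\subset\{1,\dots,k\}$. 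Because $F$ only constrains the $x$-coordinates, the interior equations $y_i=c_i$ are untouched by the blow-up, and the whole question reduces to tracking the equations $x_1=\dots=x_l=0$ through the standard charts $U_a$, $a\in J$, in which $x_a\ge0$ is radial and $s_i=x_i/x_a\ge0$ ($i\in J\setminus\{a\}$) are the remaining coordinates, with $\beta$ given by $x_i=x_as_i$.

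I would then split into the two cases dictated by the definition of the lift. First, if $H\subset F$---equivalently $G\subset F$, i.e. $J\subset\{1,\dots,l\}$---the lift is $\beta^{-1}(H)$. In every chart $U_a$ one has $a\in J\subset\{1,\dots,l\}$, so the equation $x_a=0$ forces $x_i=x_as_i=0$ for all $i\in J$, and the conditions cutting out $H$ become $\{x_a=0,\ x_i=0\ \text{for } i\le l,\ i\notin J,\ y_i=c_i\}$, which is again standard form (and lies in the front face, as it must since $H\subset F$). Second, if $H\not\subset F$ the lift is the closure of $\beta^{-1}(H\setminus F)$. In a chart $U_a$ with $a\in J\cap\{1,\dots,l\}$ the submanifold $H$ already lies in $\{x_a=0\}$, whose preimage is the front face mapping into $F$; since $H\setminus F$ misses $F$, its preimage in $U_a$ is empty and no part of the lift appears there. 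In a chart $U_a$ with $a\in J\setminus\{1,\dots,l\}$ (such $a$ exists precisely because $H\not\subset F$) the surviving constraints $x_i=0$ with $i\le l$ become $s_i=0$ for $i\in J$ and $x_i=0$ for $i\notin J$, which extend smoothly across $\{x_a=0\}$, so the closure is $\{s_i=0\ (i\in J,\ i\le l),\ x_i=0\ (i\le l,\ i\notin J),\ y_i=c_i\}$---once more standard form.

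Finally, the containment in the lift of $G$ would be read off from exactly the same computation with the interior equations $y_i=c_i$ deleted: in each chart the lift of $H$ is cut out by the defining equations of the lift of $G$ together with $y_i=c_i$, so it sits inside the lift of $G$. The step I expect to be the main obstacle is the case $H\not\subset F$, where the closure must be handled correctly: the essential points are that $H\setminus F$ is disjoint from $F$ (so its preimage misses the front face exactly in the charts $U_a$ with $a\le l$, making the lift empty there) while in the charts with $a\in J\setminus\{1,\dots,l\}$ the projective substitution $s_i=x_i/x_a$ converts the persistent boundary equations $x_i=0$ into the smooth equations $s_i=0$, so that passing to the closure introduces no new components and preserves the product structure. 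Verifying that these chart-by-chart descriptions patch to a single p-submanifold, and that no spurious piece of the front face is picked up, is the only point that really requires care.
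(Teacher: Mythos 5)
Your argument is correct, but there is nothing in the paper to compare it against: Lemma~\ref{ScatProd.61} sits in the background section \S\ref{mwc} and is stated as a standard fact, with the details deferred to \cite{daomwc}, so what you have done is supply a deliberately omitted proof. Your projective-chart computation does this correctly, and it is in the same spirit as the one blow-up computation the paper does write out (the proof of Lemma~\ref{ScatProd.34}, which uses the equivalent coordinates $t_B=\sum_{i\in I}x_i,$ $t_j=x_j/t_B$ in place of your charts $s_i=x_i/x_a$). Two points carry the real weight and deserve to be made explicit. First, the identification of the germ of the hull, $G=\{x_1=\dots=x_l=0\}$ locally, and with it the equivalence between the global dichotomy ($H\subset F$ or not), which is what the definition of the lift actually refers to, and the local one ($J\subset\{1,\dots,l\}$ or not), is not a consequence of the standard form \eqref{24.5.2008.161} alone; it uses the paper's convention that a p-submanifold is an \emph{interior} p-submanifold of a boundary face, which is what makes the number $l$ of boundary equations constant along $H$ and equal to $\codim G,$ so that a connected $H$ with $H\not\subset F$ satisfies $J\not\subset\{1,\dots,l\}$ at \emph{every} point of $H\cap F.$ Second, your chart-by-chart determination of the closure is legitimate precisely because the charts are open, so that $\overline{S}\cap U_a=\overline{S\cap U_a}\cap U_a;$ this single observation is what excludes spurious closure points, both in the charts $U_a,$ $a\in J\cap\{1,\dots,l\},$ where $\beta^{-1}(H\setminus F)\cap U_a=\emptyset$ forces the lift to miss $U_a$ entirely (limit points coming from other charts cannot enter), and in the charts $U_b,$ $b\in J\setminus\{1,\dots,l\},$ where passing to the closure merely adjoins $\{x_b=0\}$ to the locally closed set you wrote down. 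Finally, for the containment assertion alone there is a shortcut: since $H\subset F$ if and only if $G\subset F,$ the lifts of $H$ and of $G$ are defined by the same clause of the definition of the lift, and the containment follows at once from monotonicity of preimages and closures; your coordinate argument proves the sharper local statement that the lift of $H$ is cut out inside the lift of $G$ by the interior equations $y_i=c_i,$ which is worth retaining.
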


Note that the lift, also called the `proper transform', of a subset
$S\subset M$ under the blow up of a centre, $B\subset M,$ which is required
to be p-submanifold for this to make sense, is the subset of $[M;B]$ which
is \emph{either} the inverse image $\beta^{-1}(S),$ if $S\subset B,$ or
else the closure in $[M;B]$ of $\beta ^{-1}(S\setminus B)$ if it is not. Of
course this is a useful notion only for sets which are `well-placed'
with respect to $B.$

\subsection{Comparable and transversal submanifolds}

In the sequel the intersection properties of boundary faces, and later
manifolds related to multi-diagonals, play an important role. The manifolds
we consider here will always intersect cleanly, in the sense of Bott. That
is, at each point of intersection they are modelled by linear spaces and
their intersection is therefore also a manifold. This is immediate from the
definition for boundary faces and almost equally obvious for the diagonal-like
manifolds we consider later. We will say that two such manifolds, and this
applies in particular to boundary faces, $H_1$ and $H_2,$ of $M$ are
\begin{itemize}
\item Comparable if $H_1\subset H_2$ or $H_2\subset H_1.$
\item Transversal, written $H_1\pitchfork H_2$ if $N^*B_1$ and $N^*B_2$ are
  linearly independent at each point of intersection.
\item Neither comparable nor transversal, abbreviated to `\ncnt' otherwise.
\end{itemize}

More generally a collection of submanifolds $H_i,$ $i=1,\dots,J$ is
transversal if at every point $p$ of intersection of at least two of them, the
conormals $N^*_pH_i$ for those $i$ for which $p\in H_i$ are
independent. This in particular implies that the intersection is a manifold.

If $B_1\pitchfork B_2$ are two p-submanifolds of $M$ then the lift of $B_2$
to $[M;B_1]$ which is defined above to be the closure of $B_2\setminus B_1$
in the blown up manifold, is also equal to the inverse image, $\beta
^{-1}(B_2).$

\subsection{b-maps and b-fibrations} A general class of maps between
manifolds with corners which leads to a category are the b-maps. These are
maps $f:M\longrightarrow M'$ which are smooth in local coordinates and have the
following additional property. Let $\rho '_i$ be a complete collection of
boundary defining functions on $M'$ and $\rho _j$ a similar collection on
$M.$ Then there should exist non-negative integers $\alpha_{ij}$ and
positive functions $a_i\in\CI(M)$ such that

\begin{equation}
f^*\rho' _i= a_i\prod_{j}\rho _j^{\alpha _{ij}}.
\label{24.5.2008.116}\end{equation}
Such a map is b-normal if for each $j,$ $\alpha_{ij}\not=0$ for at most one
$i.$ This means that no boundary hypersurfaces of $M$ is mapped completely
into a boundary face of codimension greater than $1$ in $M'.$

The real vector fields on $M$ which exponentiate locally to diffeomorphisms
of $M$ are the elements of $\Vb(M),$ meaning smooth vector fields on $M$
which are tangent to all boundary faces. These form all the smooth sections
of a natural vector bundle $\bT M$ over $M$ and each b-map has a
b-differential at each point $f_*:\bT_pM\longrightarrow \bT_{f(p)}M'.$ A
b-map is said to be a b-submersion if this map is always
surjective. Blow-down maps are always b-maps and for boundary faces they
are b-submersions but not for other p-submanifolds. A b-map which is both a
b-submersion and b-normal is a b-fibration; blow down maps are never
b-fibrations. With the notion of smoothness extended to include classical
conormal functions on a manifold with corners, b-fibrations are the
analogues of fibrations in the sense that such regularity is preserved
under push-forward.

\section{Boundary blow up}\label{Bbu}

Each boundary face $B\in\mathcal{M}(M)$ of a manifold with corners is, as a
consequence of the assumption that the boundary hypersurfaces are embedded,
a p-submanifold. Thus, it is always permissible to blow it up. If $B$ has
codimension one (or zero), this does nothing. If $k=\codim(B)\ge2,$
i\@.e\@.~$B\in\mathcal{M}_{(2)}(M),$ one gets a new manifold with corners,
$[M;B],$ with one new boundary hypersurface $\ff([M;B])$ -- which is the
positive $2^{k}$th part of a trivial $(k-1)$-sphere bundle over $B,$ $k$
being the codimension of $B.$ This fractional-sphere bundle is the
inward-pointing part of the normal sphere bundle to $B$ and is trivialized
by the choice of a defining function for each of the boundary hypersurfaces
of $M$ containing $B.$ More generally, the other boundary hypersurfaces of
$[M;B]$ are in 1-1 correspondence with the boundary hypersurfaces of $M$
where again the assumption that the boundary hypersurfaces are embedded
means that the connectedness cannot change on blow up of $B.$ More
generally, the boundary faces of $[M;B]$ not contained in $\ff([M;B])$ are
the lifts (closures of inverse images of complements w.r.t.\ $B)$ of the
boundary faces of $M$ not contained in $B.$ The boundary faces of $[M;B]$
contained in $\ff([M;B])$ are either preimages (also called lifts) of
boundary faces of $B$ -- hence are the restrictions of the
fractional-sphere bundles to boundary faces of $B$ -- or else are proper
boundary faces of the fractional balls over one of these faces (including
of course $B$ itself). The latter ones are `new' boundary faces, not the
lifts of old ones. We identify, at least notationally, each boundary face
$B'$ of $M$ with its lift to a boundary face of $[M;B],$ even though the
latter may be either a blow-up of $B',$ if $B'$ is not contained in $B$
initially, or a bundle over $B'$ if it is -- in which case the dimension
has increased.

For later reference we examine the effect of the blow up of a boundary face
on the intersection of two others.

\begin{lemma}\label{ScatProd.34} Consider two distinct boundary faces $B_1$
  and $B_2$ in a manifold with corners $M$ and their lifts to $[M;B]$ where
  $B\in\mathcal{M}_{(2)}(M):$
\begin{enumerate}
\item[(i)] If $B_1\pitchfork B_2$ then their lifts are transversal in
  $[M;B];$ they are disjoint if and only if $B_1\cap B_2\subset B$ but
  $B_1\setminus B\not=\emptyset$ and $B_2\setminus B\not=\emptyset.$
\item[(ii)] If $B_1\subset B_2$ in $M,$ then their lifts to $[M;B]$ are
  never disjoint, they are comparable in $[M;B]$ if and only if 
\begin{equation}
\begin{gathered}
B_1\setminus B\not=\emptyset,\\
B_1\subset B\Mand B\pitchfork B_2\\
\Mor B_2\subset B,
\end{gathered}
\label{ScatProd.36}\end{equation}
the lifts are transversal if  
\begin{equation}
B_1\subset B\subsetneq B_2
\label{ScatProd.38}\end{equation}
and are otherwise \ncnt\ (that is if $B=B_2$ or $B_1\subset B$ but $B$
and $B_2$ are neither transversal nor is $B_2\subset B).$ 
\item[(iii)] If $B_1$ and $B_2$ are \ncnt\ in $M$ then their lifts are
  never comparable, are disjoint in $[M;B]$ if and only if
\begin{equation}
B_1\cap B_2\subset B\text{ and both }B_1\setminus
B\not=\emptyset\Mand B_2\setminus B\not=\emptyset,
\label{ScatProd.35}\end{equation}
they lift to meet transversally if and only if (using
\eqref{24.5.2008.144}) 
\begin{equation}
\Meither B_1\subset B\subsetneq B_1\dotplus B_2\Mor B_2\subset
B\subsetneq B_1\dotplus B_2
\label{24.5.2008.145}\end{equation}
and otherwise their lifts are \ncnt.
\end{enumerate}
\end{lemma}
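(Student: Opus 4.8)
The plan is to reduce every assertion to a local computation in coordinates adapted simultaneously to $B$, $B_1$ and $B_2$, and then to read off each relation from the projective coordinates on $[M;B]$. I would fix a point $p$ in the triple intersection (outside it the statements are vacuous or follow from the generic behaviour away from $B$) and choose adapted coordinates $x_i\ge 0$, $y_j$ at $p$ in which the three faces are cut out by index sets, $B=\{x_i=0:i\in S\}$, $B_1=\{x_i=0:i\in S_1\}$, $B_2=\{x_i=0:i\in S_2\}$. Since the conormal of such a face is spanned by the $dx_i$ with $i$ in its index set, the trichotomy becomes set theory: $B_1\pitchfork B_2\iff S_1\cap S_2=\emptyset$, comparability $\iff$ one of $S_1,S_2$ contains the other, and \ncnt\ otherwise; similarly $B_1\dotplus B_2$ has index set $S_1\cap S_2$. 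The one place this dictionary is not purely combinatorial is that faces with a common index may still be disjoint when they lie over different components of $\pa X$; this is the origin of the side conditions such as $B_1\setminus B\ne\emptyset$ and $B_1\cap B_2\subset B$, which I would carry as connectedness bookkeeping alongside the index sets.

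Next I would record once and for all the projective description of the blow-up. Choosing a radial index $i_0\in S$ and setting $x_{i_0}=r$, $x_i=r\,t_i$ for $i\in S\setminus\{i_0\}$ (with $t_i\ge 0$) and leaving the other $x_i$ and all $y_j$ alone, the front face is $\ff=\{r=0\}$, with conormal spanned by $dr$. By Lemma~\ref{ScatProd.61} each face lifts in one of two ways: if $B_\ell\not\subset B$ its proper transform is cut out, in a suitable chart, by the surviving defining functions and is not forced into $\ff$; if $B_\ell\subset B$ its lift is $\beta^{-1}(B_\ell)=\{r=0\}\cap\{x_i=0:i\in S_\ell\setminus S\}$, a sub-face of $\ff$. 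The whole lemma is then the determination, configuration by configuration, of whether the two resulting ideals are nested, jointly independent, or neither, with the essential subtlety that for a proper transform one must pass to the chart whose radial index lies outside the relevant face in order to see it off the front face.

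With this in hand I would run the three cases. In (i), $S_1\cap S_2=\emptyset$ keeps the conormals of the two lifts independent in every chart, so transversality persists; the lifts become disjoint exactly when both faces must cross $\ff$ but do so into complementary fibre directions, which is precisely $B_1\cap B_2\subset B$ together with $B_1\setminus B\ne\emptyset$ and $B_2\setminus B\ne\emptyset$. In (ii), with $S_2\subseteq S_1$, the decisive question is where $B$ lies: if $B$ does not contain the smaller face both lifts are proper transforms and stay nested; if $B$ lies strictly between them, so that the larger face is a proper transform cutting across the fibre while the smaller sinks into $\ff$, the new conormal $dr$ produces transversality; the remaining placements give comparable or \ncnt\ lifts, with the coincidence $B=B_2$, where $B_2$ is itself the centre, isolated as a degenerate case. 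Case (iii) I would treat by the same mechanism together with the hull $B_1\dotplus B_2$: no comparability can be created because $S_1$ and $S_2$ stay incomparable after adjoining the single new $\ff$-index, while disjointness and transversality occur under the stated hull conditions and all remaining placements are \ncnt\ lifts.

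The step I expect to be the main obstacle is the transversal branch of (ii) and (iii). There one must verify not merely that the two lifts still meet, but that their conormals --- now including $dr$ --- are genuinely independent along the entire intersection, and must simultaneously decide precisely when the intersection predicted by the index sets is in fact empty. This is exactly where the clean combinatorial argument has to be reconciled with the connectedness and `$\setminus B\ne\emptyset$' conditions, and where the degenerate centre case $B=B_2$ must be separated out and checked directly rather than read off the index sets.
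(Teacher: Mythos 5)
Your plan is correct and takes essentially the same route as the paper's own proof: work in adapted local coordinates, encode $B,$ $B_1,$ $B_2$ by index sets so that transversality, comparability and the hull $B_1\dotplus B_2$ become set-theoretic, write the blow-up explicitly, and case-split on whether each $B_i$ is contained in $B,$ with lifts appearing either as preimages inside the front face or as proper transforms. The only difference is presentational: you use projective charts with a chosen radial index (hence the chart-switching bookkeeping you flag), while the paper uses the symmetric coordinates $t_B=\sum_{i\in I}x_i,$ $t_j=x_j/t_B$ with the relation $\sum_{j\in I}t_j=1,$ which makes the disjointness criterion a one-line consequence of that relation rather than a chart-by-chart check.
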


\begin{proof} When discussing the local effect of the blow up of a boundary
  face we may always choose adapted coordinates $x_i\ge0,$ $y_j;$ in fact
  the `interior coordinates' $y_j$ play no part in the discussion here.

There exist subsets $I$, $I_1$, $I_2$ of $\{1,\ldots,k\}$ such that
\begin{equation}\label{e1.16.7.8}
B_i = \{x_j = 0: j \in I_i\},\; i=1,2,\;
B = \{x_j = 0: j \in I\}.
\end{equation}
The three parts of the Lemma correspond to the mutually exclusive
cases where (i) $I_1\cap I_2 = \emptyset$; (ii) $I_1\subset I_2$ or
$I_2\subset I_1$; (iii) neither of these conditions hold. Of course we only
need consider the first of the cases in (ii).

Near any point of the front face of $[X;B]$ there are adapted coordinates
with the interior coordinates, $y_j,$ lifted from $X$, and the boundary
coordinates $x_j$ replaced by 
\begin{equation}
t_B=\sum\limits_{i\in I}x_i,\Mand
t_j =\begin{cases}
x_j/t_B\mbox{ if }j\in I\\
x_j \mbox{ if }j\not\in I.
\end{cases} 
\end{equation}
Note that 
\begin{equation}
\sum\limits_{j\in I}t_j=1.
\label{24.5.2008.159}\end{equation}

Here $t_B$ defines the new front face, i\@.e\@.~the lift of $B.$
The lifts $\tB_i$ of the $B_i$ to $[X;B]$ are given by
\begin{equation} \label{e2.16.7.8}
\tB_i=\begin{cases}
\{t_B=0,\; t_j=0,\; j \in I_i\setminus I\}&\Mif B_i\subset B
\Longleftrightarrow I\subset I_i\\
\{t_j=0,\; j \in I_i\}
&\Mif B_i\setminus B \neq \emptyset,\ \Longleftrightarrow
I\not\subset I_i.
\end{cases}
\end{equation}
 
We can now examine the intersection properties of $\tB_1$ and
$\tB_2$.
\begin{enumerate}
\item[(a)] First assume $B_1$ and $B_2$ are both contained in $B$. Then
from \eqref{e2.16.7.8} it is clear that the lifts $\tB_1$ and $\tB_2$
are never transversal, since $t_B,$ the defining function of the front
face, vanishes on the both of them. They are clearly comparable if and
only $B_1$ and $B_2$ are comparable and are otherwise \ncnt.
\item[(b)] Second, suppose $B_1\subset B$ but $B_2\setminus B\not=\emptyset$.
From \eqref{e2.16.7.8} we see that $\tB_1$ and
$\tB_2$ meet transversally if and only if $I_1\setminus I$ and $I_2$ are
disjoint. The lifts can only be comparable if $\tB_1 \subset \tB_2$
and this occurs if and only if $I_1\setminus I$ contains $I_2.$  Otherwise,
$\tB_1$ and $\tB_2$ are \ncnt.
\item[(c)] Finally, suppose that $B_1\setminus B$ and $B_2\setminus B$ are both
non-empty. Then each $\tB_i$ is given by the vanishing of the $t_j$ for
$j\in I_i.$ Now $I_1\cup I_2 \supset I$ if and only if $B_1\cap B_2 \subset B.$
Hence $\tB_1$ and $\tB_2$ are disjoint in this case in view of
\eqref{24.5.2008.159}. Otherwise, the transversal, comparable or \ncnt
according as this is the case for the original submanifolds $B_1$ and
$B_2.$
\end{enumerate}

It is now a simple matter to use these observations to prove the
Lemma. Consider part (i) in which $B_1$ and $B_2$ are transversal, or
equivalently, $I_1$ and $I_2$ are disjoint. Running through cases above,
(a) cannot arise and in (b) and (c) $\tB_1$ and $\tB_2$ are transversal and
are disjoint exactly as claimed.

Next consider part (ii) of the Lemma: without loss of generality, suppose
$B_1\subset B_2$, so $I_1\supset I_2.$ Then in case (a) the lifts are
comparable if both are contained in $B$ and according to (c) they are
comparable if both are {\em not} contained in $B.$ Comparable lifts arise
in case (b) if $I_2\subset I_1\setminus I,$ or equivalently if $I$ and
$I_2$ are disjoint subsets of $I.$ Thus the lifts are comparable in this
case if $B_1\subset B_2,$ $B_1\subset B,$ $B$ and $B_2$ are transversal.

We also see from (b) that the lifts of $\tB_1$ and $\tB_2$ are
transversal if and only if $I_1\cap I_2 \setminus I\cap I_2 = \emptyset$ or
equivalently if $I_2\subset I.$ This proves \eqref{ScatProd.38}.

Finally consider part (iii) of the Lemma. Under case (a) the lifts are
always \ncnt. Under case (b), the lifts are transversal if and only if 
$(I_1\setminus I)\cap I_2 = \emptyset,$ that is, if and only if $I_1\cap
I_2\subset I\cap I_2.$ Since $I\subset I_1$, this just means that
$I_1\cap I_2 \subset I$ which gives \eqref{24.5.2008.145}.  Otherwise,
they are \ncnt.  Under (c), the lifts are \ncnt unless the $B_1\cap
B_2\supset B,$ in which case they are disjoint, giving
\eqref{ScatProd.35}.
\end{proof}

We are most interested in the transversality of the intersections of the
lifts, meaning either they are disjoint or meet transversally.

\begin{corollary}\label{24.5.2008.143} If $B_1$ and $B_2$ are distinct
  boundary faces of a manifold with corners and $B\in\mathcal{M}_{(2)}(M)$
  then $B_1$ and $B_2$ are (i\@.e\@. lift to be) transversal in $[M;B]$ if
  and only if they are initially transversal or if not then
\begin{equation}
B_1\subset B\subsetneq B_1\dotplus B_2\Mor
B_2\subset B\subsetneq B_1\dotplus B_2.
\label{24.5.2008.146}\end{equation}
Two boundary faces lift to be disjoint if  
\begin{equation}
B_1\cap B_2\subset B\text{ and both }B_1\setminus
B\not=\emptyset\Mand B_2\setminus B\not=\emptyset.
\label{24.5.2008.147}\end{equation}
\end{corollary}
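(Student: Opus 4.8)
The corollary is essentially a reorganization of Lemma I.8 (labelled `ScatProd.34'), extracting from its three cases the precise conditions under which two boundary faces $B_1$ and $B_2$ lift to be \emph{transversal} (meaning disjoint or meeting transversally). So the plan is straightforward: I would simply collate the relevant conclusions of the three parts of the preceding Lemma, since every case has already been analyzed there.

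First, I would address the transversality claim. The assertion is that $B_1$ and $B_2$ lift to be transversal in $[M;B]$ if and only if they were already transversal, or else condition \eqref{24.5.2008.146} holds. If $B_1\pitchfork B_2$ initially, then part (i) of Lemma~\ref{ScatProd.34} immediately gives that the lifts are transversal (disjoint counting as a transversal intersection). If they are \emph{not} initially transversal, then either they are comparable (part (ii)) or \ncnt{} (part (iii)). In the comparable case, Lemma~\ref{ScatProd.34}(ii) tells us transversality of the lifts occurs exactly when \eqref{ScatProd.38} holds, i.e. $B_1\subset B\subsetneq B_2;$ observing that when $B_1\subset B_2$ one has $B_1\dotplus B_2=B_2,$ this is precisely one of the disjuncts in \eqref{24.5.2008.146}. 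In the \ncnt{} case, Lemma~\ref{ScatProd.34}(iii) gives transversality exactly under \eqref{24.5.2008.145}, which is literally \eqref{24.5.2008.146}. Assembling these, the two non-comparable-initially subcases both yield \eqref{24.5.2008.146}, so the stated equivalence follows.

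For the disjointness claim, I would note that \eqref{24.5.2008.147} is identical to \eqref{ScatProd.35} of part (iii), so the content is that this condition \emph{suffices} for the lifts to be disjoint. One should check it against all three parts: in part (i), condition \eqref{24.5.2008.147} is exactly the stated disjointness criterion for transversal faces; in part (iii) it is \eqref{ScatProd.35} verbatim. The case $B_1\subset B_2$ (part (ii)) cannot produce disjoint lifts—the Lemma explicitly states they are ``never disjoint''—but there the hypothesis $B_1\cap B_2\subset B$ together with $B_1\setminus B\neq\emptyset$ forces $B_1\subset B$ to fail while $B_1\subset B_2,$ so \eqref{24.5.2008.147} is not actually satisfiable in a way that contradicts the Lemma; I would remark briefly that the sufficient condition is therefore consistent across all cases.

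The main (very minor) obstacle is purely bookkeeping: matching the set-theoretic inclusions of \eqref{24.5.2008.146} against the index-set conditions ($I_2\subset I$, $I_1\cap I_2\subset I$) that appear in the Lemma's proof, and confirming that the $\dotplus$-formulation correctly encodes ``$B\subsetneq B_1\dotplus B_2$'' as the negation of the degenerate cases where the lifts remain \ncnt. No genuinely new argument is required—the corollary is a clean restatement, and I would present it as such.
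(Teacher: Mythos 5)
Your proposal is correct and takes the same route as the paper, which in fact states Corollary~\ref{24.5.2008.143} without any proof at all, treating it as an immediate collation of the three cases of Lemma~\ref{ScatProd.34}; your write-up just makes that collation explicit, including the two small verifications the reader is expected to supply (that $B_1\dotplus B_2=B_2$ when $B_1\subset B_2$, so \eqref{ScatProd.38} becomes \eqref{24.5.2008.146}, and that \eqref{24.5.2008.147} is unsatisfiable for comparable faces, so the ``never disjoint'' clause of part (ii) causes no conflict). No gap.
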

\noindent Note that if $B_1\pitchfork B_2$ then $B_1\dotplus B_2=M.$ 

\section{Intersection-orders}\label{Intord}

Since boundary faces lift to boundary faces under blow up of any boundary
face, any collection, $\mathcal{C}\subset\mathcal{M}(M),$ in any manifold
with corners, can be blown up in any preassigned order, leading to a
well-defined manifold with corners. Of course this actually means that
after the first blow-up the lift of the second boundary face is blown up,
and so on. Let the order of blow up be given by an injective function
\begin{equation}
o:\mathcal{C}\longrightarrow \bbN
\label{ScatProd.33}\end{equation}
where for simplicity we also assume that the range is an interval $[1,N]$ in
the integers. Denote the total blow-up as $[M;\mathcal{C},o];$ in general
the choice of order does make a difference to the final result but the
interior is always canonically identified with the interior of $M.$

From now on we will assume that the initial collection of boundary faces,
$\mathcal{C},$ of $M$ which are to be blown up is closed under
non-transversal intersection in $M:$
\begin{equation}
B,\ B'\in\mathcal{C}\subset\mathcal{M}_{(2)}(M)\Longrightarrow B\pitchfork
B'\Mor B\cap B'\in\mathcal{C}.
\label{ScatProd.43}\end{equation}
It should be noted that this is a condition in $M$ and can fail for the
lifts under blow up of boundary faces in that the intersection of the lifts
may not be equal to the lift of the intersection.

\begin{lemma}\label{24.5.2008.148} If $\mathcal{C}\subset\mathcal{M}(M)$ is
  closed under non-transversal intersection then it is a disjoint union of
  collections $\mathcal{C}_i\subset\mathcal{C}$ which are also closed
  under non-transversal intersection, each contain a unique minimal
  element, and are such that all intersections between elements of 
  different $\mathcal{C}_i$ are transversal.
\end{lemma}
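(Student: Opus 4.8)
The plan is to realise the decomposition as the connected components of a graph on $\mathcal{C}$, and to identify the minimal element of each component with an honest intersection. The one combinatorial fact I would record first is the characterisation of transversality for boundary faces. Writing $\sigma(B)\subset\mathcal{M}_1(M)$ for the set of boundary hypersurfaces containing a face $B$, the conormal $N^*B$ is spanned at each of its points by the differentials of the defining functions of the hypersurfaces in $\sigma(B)$; hence two faces that meet are non-transversal exactly when $\sigma(B)\cap\sigma(B')\neq\emptyset$, i.e.\ they share a common boundary hypersurface. The map $B\mapsto\sigma(B)$ is order-reversing: $B\subset B'$ forces $\sigma(B)\supseteq\sigma(B')$. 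In particular any two distinct comparable faces of codimension $\geq2$ are non-transversal, a fact I will use repeatedly.

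Next I would form the graph on the vertex set $\mathcal{C}$ in which $B$ and $B'$ are joined whenever they are \emph{not} transversal, and let $\mathcal{C}_1,\dots,\mathcal{C}_p$ be its connected components. These partition $\mathcal{C}$ by construction, giving the disjoint-union statement. The transversality of faces in distinct components is then immediate: a non-transversal pair is joined by an edge and so lies in a single component, so faces from different $\mathcal{C}_i$ cannot be non-transversal. Closure of each $\mathcal{C}_i$ under non-transversal intersection is almost as quick: if $B,B'\in\mathcal{C}_i$ are non-transversal then $B\cap B'\in\mathcal{C}$ by the standing hypothesis \eqref{ScatProd.43}, and since $B\cap B'\subset B$ is either equal or comparable to $B$ it shares an edge with $B$ and therefore lies in $\mathcal{C}_i$.

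The substantive point is the unique minimal element, which I would obtain by exhibiting a genuine minimum $P:=\bigcap_{B\in\mathcal{C}_i}B$ and showing $P\in\mathcal{C}_i$. Since $\mathcal{C}_i$ is connected, enumerate its members $B_1,\dots,B_m$ so that each $B_k$ with $k\geq2$ is non-transversal with some $B_j$, $j<k$ (a breadth-first ordering of the component). Put $P_k=B_1\cap\dots\cap B_k$ and induct, the claim being $P_k\in\mathcal{C}_i$. Given $P_{k-1}\in\mathcal{C}_i$, the containment $P_{k-1}\subset B_j$ yields $\sigma(P_{k-1})\supseteq\sigma(B_j)$, so $\sigma(B_k)\cap\sigma(P_{k-1})\supseteq\sigma(B_k)\cap\sigma(B_j)\neq\emptyset$ and $B_k$ is non-transversal with $P_{k-1}$. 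Then \eqref{ScatProd.43} gives $P_k=P_{k-1}\cap B_k\in\mathcal{C}$, and as in the previous paragraph $P_k\subset P_{k-1}$ forces $P_k\in\mathcal{C}_i$. Hence $P=P_m\in\mathcal{C}_i$ is contained in every member of $\mathcal{C}_i$, so it is the unique minimal element.

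The step I expect to demand the most care is this induction: to apply \eqref{ScatProd.43} at stage $k$ I need $B_k$ and $P_{k-1}$ to be genuinely non-transversal, which by the first paragraph means not only that they share a hypersurface but that they actually meet, so that $P_k$ is a nonempty boundary face of codimension $\geq2$ and the closure hypothesis is applicable. The order-reversing property $\sigma(P_{k-1})\supseteq\sigma(B_j)$ supplies the shared hypersurface; the nonemptiness of $P_{k-1}\cap B_k$ is automatic once $\partial X$ is connected and otherwise has to be read off from the way $\mathcal{C}$ is generated. Everything else is formal graph-theoretic bookkeeping, so this intersection-nonemptiness is the real content to pin down.
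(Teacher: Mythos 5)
Your argument takes a genuinely different route from the paper's. The paper works top-down: it takes the minimal elements $A_i$ of $\mathcal{C}$, notes they are pairwise transversal (else their intersection would lie in $\mathcal{C}$ and contradict minimality), sets $\mathcal{C}_i=\{F\in\mathcal{C};F\supset A_i\}$, and argues these up-sets are disjoint and mutually transversal because the defining functions of any $F\in\mathcal{C}_i$ are among those of $A_i$ and distinct $A_i$ ``cannot have a defining function in common.'' You work bottom-up, taking components of the non-transversality graph and manufacturing the minimum by intersecting along a breadth-first ordering; when the lemma holds the two decompositions necessarily coincide, but the mechanisms are different. Your closure and cross-transversality paragraphs are correct as written.

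The gap you flag in your last paragraph is, however, genuine, and it cannot be closed at the stated level of generality, because the lemma as stated is false for a general manifold with corners. Take $M=[0,1]^3$, the edge $E=\{x=y=0\}$, its two end vertices $V_0=\{x=y=z=0\}$ and $V_1=\{x=y=0,\ z=1\}$, and $\mathcal{C}=\{E,V_0,V_1\}\subset\mathcal{M}_{(2)}(M)$. This is closed under non-transversal intersection ($E\cap V_j=V_j\in\mathcal{C}$, while $V_0\cap V_1=\emptyset$ is transversal by the paper's convention that disjoint faces are transversal), yet $E$ is non-transversal to both vertices, so any decomposition with transversal cross-intersections must put all three faces in one piece, and that piece has two minimal elements. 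In your induction this is exactly the step that breaks: with $B_1=E$, $B_2=V_0$, $B_3=V_1$ one gets $P_2=V_0$ and $P_2\cap B_3=\emptyset$, even though $\sigma(P_2)\cap\sigma(B_3)\neq\emptyset$. Note, though, that the paper's own proof founders on the same point: the assertion that elements of different $\mathcal{C}_i$ cannot share a defining function tacitly assumes that transversal faces have no common boundary hypersurface, which is true when they intersect but can fail when they are disjoint --- in the example above $E$ lies in both up-sets, and $V_0,V_1$ share the hypersurfaces $\{x=0\}$ and $\{y=0\}$. Both proofs become complete under the supplementary hypothesis that any two elements of $\mathcal{C}$ sharing a boundary hypersurface actually intersect; this holds for the collections $\mathcal{C}\subset\mathcal{B}_{\bo}(X^n)$ arising in the paper's construction (whether or not $\pa X$ is connected), since a common hypersurface of $X^n$ forces a common boundary-component factor, and two products of copies of $X$ and a fixed boundary component always meet. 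So your instinct was right: the intersection-nonemptiness is the real content, and it must be added to the hypotheses (or the statement restricted to such collections) for either your argument or the paper's to go through.
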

\noindent These $\mathcal{C}_i$ may be called the transversal components of
$\mathcal{C}.$

\begin{proof} Consider the minimal elements $A_i\in\mathcal{C},$ those
  which contain no other element. These must intersect transversally, since
  otherwise the intersection would be in $\mathcal{C}$ and they would not
  be minimal. Then set $\mathcal{C}_i=\{F\in\mathcal{C};F\supset A_i\};$
  these are certainly closed under non-transversal intersection. On the
  other hand the defining functions for elements of $\mathcal{C}_i$ are
  amongst the defining functions for $A_i.$ It follows that the different
  $\mathcal{C}_i$ are disjoint, since their elements cannot have a defining
  function in common, and also that intersections between their elements
  are transversal.
\end{proof}

\begin{definition}
An order $o$ on a collection $\cC$ of boundary faces is an
\emph{intersection-order} if for any pair $B_1$ and $B_2\in\mathcal{C}$
which are not transversal or comparable, $B_1\cap B_2$ comes earlier than
at least one of them, i\@.e\@.
\begin{equation}
B_1,\ B_2\in\mathcal{C} \Longrightarrow B_1\pitchfork B_2\Mor o(B_1\cap
B_2)\le\max(o(B_1),o(B)_2)).
\label{ScatProd.47}\end{equation}
\end{definition}
Of course if $B_1$ and $B_2$ are comparable then the intersection
is equal to one of them so \eqref{ScatProd.47} is automatic. On the other
hand if $B_1$ and $B_2$ intersect non-transversally then
\eqref{ScatProd.47} implies that the intersection comes strictly before the
second of them with respect to the order. We will not repeatedly say that
$\mathcal{C}$ is closed under non-transversal intersection, just that it has an
intersection-order which is taken to imply the closure condition.

\begin{definition}
An order $o$ on a collection $\cC$ of boundary faces is a
\emph{size-order} if the codimension is weakly  
decreasing with the order, i.e.\
\begin{equation}
o(B_1) < o(B_2) \Longrightarrow \codim(B_1) \geq \codim(B_2).
\end{equation}
\end{definition}
Clearly a size-order is an intersection-order
since the intersection of \ncnt\ boundary faces necessarily has larger
codimension than either of them and so must occur first in the order of the
three.

\begin{lemma}\label{ScatProd.41} The iterated blow up in a manifold with
  corners $M$ of a collection of boundary faces $\mathcal{C},$ which is
  closed under non-transversal intersection, with respect to any two size
  orders gives canonically diffeomorphic manifolds, with the diffeomorphism
  being the extension by continuity from the identifications of the interiors.
\end{lemma}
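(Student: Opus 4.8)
The plan is to prove that two size-orders on $\mathcal{C}$ give canonically diffeomorphic iterated blow-ups by reducing to the commutation of adjacent transpositions. Any two size-orders can be connected by a sequence of transpositions, each swapping two consecutive blow-ups, and I would like to arrange matters so that every such transposition swaps two centres that are transversal or comparable at the moment of blow-up — for then Lemma~\ref{ScatProd.34} (via the characterization in the introduction) guarantees the two orders produce the same space. The essential structural input is that a size-order blows up by weakly decreasing codimension, so within the list the centres of any fixed codimension form a contiguous block. Two size-orders differ only by permuting elements \emph{within} each such block (elements of different codimension keep their relative order, since codimension is monotone along both orders). Thus it suffices to show that any two centres of the \emph{same} codimension can be commuted.

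The key step is to check that two distinct boundary faces $B_1, B_2 \in \mathcal{C}$ of equal codimension, \emph{as lifted to the partially blown-up manifold just before they are to be blown up}, are always transversal. In $M$ itself, two distinct boundary faces of the same codimension cannot be comparable (comparability forces distinct codimensions), so they are either transversal or \ncnt. If they are already transversal in $M$, Lemma~\ref{ScatProd.34}(i) shows transversality is preserved under all the intervening blow-ups of strictly larger codimension, and the two remain commutable. The delicate case is when $B_1$ and $B_2$ are \ncnt\ in $M$: here I must use the size-order hypothesis together with the closure condition \eqref{ScatProd.43}. Since $B_1 \cap B_2$ has strictly larger codimension than $B_1, B_2$ and lies in $\mathcal{C}$ by closure, the size-order forces $B_1\cap B_2$ (and indeed every element of $\mathcal{C}$ lying inside the larger-codimension faces through which $B_1,B_2$ pass) to be blown up \emph{before} $B_1$ and $B_2$. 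The claim I would establish is that after blowing up all these smaller, higher-codimension faces, the lifts of $B_1$ and $B_2$ have been separated into transversal position.

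Concretely, I would verify this separation using the transversality criterion of Corollary~\ref{24.5.2008.143}. The corollary says a blow-up of $B$ renders \ncnt\ faces $B_1, B_2$ disjoint precisely when $B_1\cap B_2 \subset B$ with $B_1\setminus B, B_2 \setminus B$ both nonempty, and renders them transversal when $B_1 \subset B \subsetneq B_1 \dotplus B_2$ or symmetrically. Since the intersection $B_1 \cap B_2$ is itself in $\mathcal{C}$ and is blown up earlier (being of larger codimension), blowing it up already has the disjoint-making effect, so that at the stage where $B_1, B_2$ are to be blown up their lifts meet transversally or are disjoint. I would phrase this as an induction on codimension processed in decreasing order: assuming all centres of codimension $>c$ have been blown up in a way independent of their internal order (using that those, in turn, were commutable by the same argument one codimension up), the surviving lifts of the codimension-$c$ centres are pairwise transversal, hence may be blown up in any order with the same result.

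The main obstacle I anticipate is the bookkeeping in that induction: I must confirm that the transversality of the codimension-$c$ lifts is genuinely unaffected by \emph{which} admissible (size-)order was used for the higher-codimension centres, and that the closure condition \eqref{ScatProd.43} is not spoiled by lifting — the paper itself warns that the intersection of lifts need not equal the lift of the intersection. To control this I would track, for each pair $B_1, B_2$, the minimal boundary face $B_1 \dotplus B_2$ and show that the relevant separating centre $B_1 \cap B_2 \in \mathcal{C}$ is always available and processed first, so that Corollary~\ref{24.5.2008.143} applies cleanly at each stage. Once the pairwise transversality at the point of blow-up is secured, the commutation of adjacent same-codimension transpositions is immediate, and chaining these transpositions yields the canonical diffeomorphism, defined by extension from the common interior as asserted.
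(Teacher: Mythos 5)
Your skeleton is sound and is, in essence, the same mechanism the paper uses: any two size-orders differ by permutations within the blocks of fixed codimension, two distinct faces of equal codimension are never comparable, transversal pairs stay transversal (or become disjoint) under boundary blow-up by Lemma~\ref{ScatProd.34}(i), and an \ncnt\ pair is to be separated by the earlier blow-up of its intersection, which lies in $\mathcal{C}$ by \eqref{ScatProd.43} and precedes the pair in any size-order. The genuine gap is at the step you yourself flag as the ``main obstacle,'' and your proposed fix does not close it. The separation claim requires that \emph{at the stage when the lift of $B_1\cap B_2$ is blown up}, the lifts of $B_1$ and $B_2$ still intersect inside it, with neither contained in it; only then does the disjointness criterion \eqref{ScatProd.35} fire. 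Lemma~\ref{ScatProd.34} and Corollary~\ref{24.5.2008.143} are statements about a \emph{single} blow-up of a fixed manifold applied to its own boundary faces, so invoking them ``at each stage'' presupposes that you already know the configuration of the lifts at that stage --- and that is precisely what has to be proved. Moreover, tracking $B_1\dotplus B_2$ is misdirected: in a size-ordered blow-up no centre can strictly contain a not-yet-blown-up element of $\mathcal{C}$ (it would have to have strictly smaller codimension), so the transversality mechanisms \eqref{ScatProd.38} and \eqref{24.5.2008.145} never operate; everything rests on the disjointness mechanism \eqref{24.5.2008.147}, that is, on controlling intersections of lifts --- which, as you note, need not be lifts of intersections, and for which your proposal offers no argument.

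The missing idea is an invariant re-established after every single blow-up, and it is exactly the content of the first two paragraphs of the paper's proof. When the first element $B$ of a size-order (necessarily of maximal codimension, hence containing no other element of $\mathcal{C}$) is blown up, every remaining element lifts to the closure of its complement, so codimensions are unchanged and the induced order is again a size-order; comparable pairs lift to comparable pairs; transversal pairs lift to transversal (or disjoint) pairs; and an \ncnt\ pair either becomes disjoint --- which by \eqref{ScatProd.35} happens precisely when its intersection equals the centre --- or lifts to an \ncnt\ pair whose intersection is the \emph{lift} of the original intersection, hence is again an element of the lifted collection. The problematic cases \eqref{ScatProd.38} and \eqref{24.5.2008.145} are excluded because the centre contains no remaining element. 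With this invariant in hand, the hypotheses you need are available verbatim in the current manifold at every later stage, and either your transposition scheme or the paper's own induction on the number of elements (peel off a maximal-codimension element, recurse, and compare orders with different first elements by exchanging two transversal maximal elements) completes the proof. Without it, your key claim --- that the codimension-$c$ lifts are pairwise transversal or disjoint when their block is reached, independently of the order used for the higher-codimension centres --- is unsupported.
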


\begin{proof} The first element, $B,$ in the order necessarily has maximal
  codimension so cannot contain any other. Thus all lifts of elements of
  $\cC'=\cC\setminus\{B\}$ are closures of complements with respect to $B;$
  their lifts therefore have the same dimension as before and hence in the
  induced order on $\cC'$ in $[M;B]$ the codimension is weakly decreasing.

Now, we proceed to show that the lift of the elements of $\cC'$ to $[M;B]$
is closed under non-transversal intersection. So, consider two distinct
elements $B_1,$ $B_2\in\cC'.$ If they are comparable then $B$ cannot
contain the smaller so by, Lemma~\ref{24.5.2008.148} they lift to be
comparable. If they are transversal then again by Lemma~\ref{ScatProd.34}
they lift to be transversal. Finally, suppose $B_1$ and $B_2$ are \ncnt. Since
\eqref{24.5.2008.145} cannot arise here, either \eqref{ScatProd.35} holds,
and hence $B_1\cap B_2=B$ and they lift to be disjoint, or else $B_1\cap
B_2\setminus B\not=\emptyset$ and they lift to be \ncnt\ with intersection
the lift of $B_1\cap B_2\in\cC'.$

Thus after the blow up of the first element of $\cC$ the remaining elements
lift to a collection of boundary faces closed under non-transversal
intersection and in size-order. Now we can proceed by induction on the
number of elements of $\cC$ and hence assume that we already know that the
result of the blow up of $\cC'$ in $[M;B]$ is independent of the
size-order. If $B$ is the only element of maximal codimension in $\cC$ the
result follows. If there are other elements of the same codimension then by
Lemma~\ref{24.5.2008.148} they meet $B$ transversally. Thus, the order of
$B$ and the second element can be exchanged. Applying discussion above
twice it follows that the same manifold results from blow up in any
size-order on $\cC.$
\end{proof}

We proceed to show that the the same manifold results from the blow up in
any intersection-order.

\begin{proposition}\label{ScatProd.1} The iterated blow up of $M,$
  $[M;\mathcal{C},o],$ of an intersection-ordered collection of boundary
  faces is a manifold with corners independent of the choice of
  intersection-order in the sense that different orders give canonically
  diffeomorphic manifolds, with the diffeomorphism being the extension by
  continuity from the identifications of the interiors.
\end{proposition}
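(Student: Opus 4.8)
\emph{Strategy.} The plan is to reduce the statement to the already-established case of size-orders, Lemma~\ref{ScatProd.41}, by connecting an arbitrary intersection-order to a size-order through a sequence of transpositions of \emph{adjacent} elements, each of which exchanges two centres that, at the stage of the blow-up at which they are swapped, are transversal or comparable. Since transversal and comparable p-submanifolds commute under blow-up (the basic commutation fact recalled in the Introduction), each such transposition extends by continuity from the interior to a canonical diffeomorphism and therefore does not change the resulting manifold. As all size-orders already yield canonically diffeomorphic spaces by Lemma~\ref{ScatProd.41}, it will then follow that any two intersection-orders do as well, the identifying diffeomorphism in each case being the extension of the identity on the interior.

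To carry out the reduction I would argue by induction on the number of elements of $\mathcal{C},$ mirroring the proof of Lemma~\ref{ScatProd.41}. The one genuinely new point is that the first element of an intersection-order need not have maximal codimension, so the induction cannot be started directly. To remedy this, I would first commute an element $A\in\mathcal{C}$ of maximal codimension to the front of the order. Such an $A$ is minimal for inclusion, and a minimal element can never be \ncnt\ with another element of $\mathcal{C}$: were it so, then by the closure hypothesis \eqref{ScatProd.43} the intersection would lie in $\mathcal{C}$ and be strictly smaller than $A,$ contradicting minimality. Hence $A$ is transversal or comparable (necessarily $A\subset B'$) to every other element of $\mathcal{C}$ in $M.$ After $A$ has been brought to the front and blown up first, the remaining centres lift to a collection in $[M;A]$ which, exactly as in the proof of Lemma~\ref{ScatProd.41} via Lemma~\ref{ScatProd.34}, is again closed under non-transversal intersection and inherits an intersection-order. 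Iterating this extraction of a maximal-codimension element places the centres in weakly decreasing codimension, that is, transforms the order into a size-order using only valid transpositions, and the conclusion then follows from Lemma~\ref{ScatProd.41}.

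\emph{Main obstacle.} The hard part will be the persistence of transversality and comparability through the intermediate blow-ups performed while $A$ is slid to the front (and, more generally, while any adjacent pair is exchanged). Transversality is stable: by Lemma~\ref{ScatProd.34}(i) transversal faces, and in particular disjoint faces, lift to transversal faces under the blow-up of any boundary face. Comparability, however, can degenerate to \ncnt\ under an intervening blow-up, as case~(ii) of Lemma~\ref{ScatProd.34} shows, and this is precisely where the intersection-order hypothesis must be used. The key mechanism is that whenever two faces $B_1$ and $B_2$ are \ncnt\ in $M,$ condition \eqref{ScatProd.47} forces $B_1\cap B_2$ (which lies in $\mathcal{C}$ by \eqref{ScatProd.43}) to be blown up strictly before the later of the two; by case~(iii) of Lemma~\ref{ScatProd.34}, namely \eqref{ScatProd.35}, this blow-up makes their lifts \emph{disjoint}, after which transversality is stable. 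I therefore expect the bulk of the work to be a careful bookkeeping argument: at each transposition one identifies the original relationship in $M$ of the two centres being exchanged and uses \eqref{ScatProd.47} together with Lemma~\ref{ScatProd.34} to confirm that, at that stage, their lifts have already become transversal or comparable. Assembling these checks along the sequence of transpositions, and verifying at each inductive step that the lifted collection again admits an intersection-order, completes the argument.
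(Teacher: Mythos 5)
Your overall strategy \mhy\ connect an arbitrary intersection-order to a size-order by adjacent transpositions, verify that each transposition is a legitimate commutation of blow-ups, and then quote Lemma~\ref{ScatProd.41} \mhy\ is the same as the paper's. The implementation differs, however: the paper does not extract a maximal-codimension element and slide it to the front. It locates the \emph{first} adjacent pair $(I,J)$ at which codimension increases, swaps only that pair, and uses a defect function to show that finitely many such swaps reach a size-order. The point of that choice is that the predecessors of $I$ are then automatically in size-order, and in a size-ordered prefix no centre can strictly contain any element blown up after it (strict containment of distinct boundary faces forces strictly larger codimension of the contained one). Hence comparable pairs never degenerate to \ncnt\ during the prefix blow-ups, and for pairs \ncnt\ in $M$ the intersection, having larger codimension, is blown up before \emph{both} of them, making their lifts disjoint. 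This is what keeps the persistence bookkeeping finite and local.

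In your scheme the prefix $P_1,\dots,P_{j-1}$ through which $A$ is slid is an \emph{arbitrary} intersection-ordered collection, and there the degeneration you flag actually occurs, in a way your cited mechanism does not control. Take $\cC=\{B_1,B_{12},B_2,A\}$ in $[0,1)^4$ with $B_1=\{x_1=x_2=0\},$ $B_2=\{x_2=x_3=0\},$ $B_{12}=B_1\cap B_2,$ $A=\{x_1=x_2=x_3=x_4=0\},$ blown up in the order listed. This is an intersection-order (the unique \ncnt\ pair $(B_1,B_2)$ has its intersection preceding $B_2$), $A$ has maximal codimension and $A\subset B_2$; yet after the very first blow-up, of $B_1,$ the lifts of $A$ and $B_2$ are \ncnt\ by Lemma~\ref{ScatProd.34}(ii), because $A\subset B_1$ and $B_1,B_2$ are \ncnt. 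Your mechanism \mhy\ that \eqref{ScatProd.47} forces the intersection of an \ncnt-in-$M$ pair to be blown up before the later of the two, after which they are disjoint \mhy\ is about $(B_1,B_2),$ not $(A,B_2),$ and it only places $B_{12}$ before $B_2,$ not before $B_1$; so it cannot prevent $B_1$ from being blown up while still \ncnt\ to $B_2,$ which is exactly what creates the damage. In this example the pair $(A,B_2)$ does recover: once $B_{12}$ is blown up the lifts become transversal, but via \eqref{24.5.2008.145}, not via the disjointness clause \eqref{ScatProd.35}, a mechanism your sketch never invokes. The statement your proof actually needs \mhy\ that any \ncnt-ness of $(A,P_j)$ created at an intermediate stage is always resolved before the stage at which $A$ and $P_j$ are exchanged \mhy\ is the real content of the proposition in your approach; it is history-dependent (comparable $\to$ \ncnt\ $\to$ transversal), so it cannot be settled, as you propose, by classifying ``the original relationship in $M$'' of the two centres and applying Lemma~\ref{ScatProd.34} once. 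To complete your route you must formulate and prove this inductive invariant for arbitrary intersection-ordered prefixes; alternatively, adopt the paper's first-violation swap, where the size-ordered prefix makes the invariant immediate.
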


\begin{proof} Let $o$ be the order in the form \eqref{ScatProd.33}. For
such an order we define the defect to be
\begin{equation}
d(o)=\sum\limits_{J\in\mathcal{C}}o(J)\max\{(\codim(J)-\codim(I))_+;o(I)<o(J)\}.
\label{1.1.2008.5}\end{equation}
Here the codimensions are as boundary faces of $M,$ not after
blow-up. 
Thus the defect is the sum over all sets of the maximum difference (if
positive) between the codimensions of the `earlier' sets and of that
set, weighted by the position of the set. Thus, for a size-order the
defect vanishes, 
because all these differences are non-positive, otherwise it is strictly
positive.

For a general intersection-order take the first set, with respect to the
order, $I,$ such that its successor, $J,$ had larger codimension in $M,$ and
consider the order $o'$ obtained by reversing the order of $I$ and $J.$ We
claim that this is an intersection-order and of strictly smaller
defect, and that $[M,\cC,o] = [M,\cC,o']$.

The last point will be checked first. Certainly $I$ cannot be the last
element with respect to the order. Note also that the boundary faces up to,
and including, $I$ are in size-order, by the choice of $I.$ Let
$\cP\subset\cC$ be the subcollection of strict predecessors of $I.$ Let
$M'$ be the manifold obtained from $M$ by blowing up $\cP.$ In order to be
able to commute the lifts, $\tI$ and $\tJ,$ of $I$ and $J$ to $M',$ we need
to rule out the possibility that they are \ncnt; Lemma~\ref{ScatProd.34}
will be used repeatedly for this.

Suppose first that $I$ and $J$ are comparable in $M.$ Then $J\subset I.$
According to Lemma~\ref{ScatProd.34}, such a comparable pair of
submanifolds can remain comparable, can become transversal, or can become
\ncnt in $M'.$ If they ever become transversal, then they remain so under
all subsequent blow-ups. Now comparable submanifolds $B_1\subset B_2$ can
only become \ncnt under a blow-up with centre $B$ satisfying $B_1\subset
B=B_2$ or $B_1\subset B,$ $B\setminus B_2\neq\emptyset$. Since the centres
of the blow-ups leading to $M'$ are all of smaller dimension than $I,$
which here plays the role of $B_2,$ we see that these conditions can never
be met by elements $B\in\cP.$ So if $I$ and $J$ are comparable in $M,$
their lifts to $M'$ cannot be \ncnt.

The only remaining possibility is that $I$ and $J$ are \ncnt in $M.$ In
this case, $I\cap J$ has strictly dimension than $I$ and so, because $o$ is
an intersection-order, this must be an element of $\cP.$ Moreover, because
$\cP$ is in a size-order, the lifts of $I$ and $J$ meet in the lift of
$I\cap J$ until this manifold is blown up, at which point they become
disjoint and then remain disjoint under all subsequent blow-ups. This shows
that $\tI$ and $\tJ$ commute in $M'.$

To show that $o'$ is an intersection-order, consider an \ncnt pair $A,$
$B.$ The only possibility of a failure of the intersection-order condition
for $o'$ is if $I$ was the intersection and $J$ the second element, in the
order, of such a pair. However this means that, initially in $M,$
$\codim(I)>\codim(J)$ and from the discussion above, this cannot occur.

Now, to compute the defect of $o'$ observe that each of the sets which came
after $J$ initially still have the same overall collection of sets
preceding them, and the same order, hence make the same contribution to
the defect. The same is true for the sets which preceded $I.$ Thus we only
need to recompute the contributions from $I$ and $J$ after reversal. In its
new position, $J$ has one less precedent, viz\@. $I$ now comes later, so the
set of differences $\codim(J)-\codim(I')$ where $o(I')<o'(J)$ is smaller
and the order of $J$ has gone down, so it makes a strictly smaller
contribution. The contribution of $I$ was zero before and is again zero,
since the only extra set preceding it, namely $J,$ has larger codimension
than it.

Thus this `move' strictly decreases the defect. Repeating the procedure a
finite number of times (note that after the first rearrangement, $J$
might well be the `new $I$') must reduce the defect to $0.$ Hence the blow
up for any intersection-order is (canonically) diffeomorphic to one for
which $\codim(B)$ is weakly decreasing, i\@.e\@. to a size-order and hence by
Lemma~\ref{ScatProd.41} all intersection-orders lead to the same blown-up
manifold.
\end{proof}

\begin{definition}\label{ScatProd.6} We denote by $[M,\mathcal{C}]$ the
iterated blow-up of any collection of boundary faces which is closed under
non-transversal intersection, with respect to any intersection-order.
\end{definition}

One simple rearrangement result which follows from this is:

\begin{lemma}\label{l1.21.7.8} Suppose $\cC_1\subset\cC\subset\cM(M)$ are
  both closed under non-transversal intersection, then there is an
  intersection order on $\cC$ in which the elements of $\cC_1$ come before
  all elements of $\cC\setminus\cC_1.$
\end{lemma}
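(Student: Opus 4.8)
The plan is to produce the order by concatenating two \emph{size-orders}, one on $\cC_1$ and one on the complement $\cC\setminus\cC_1,$ with the whole of $\cC_1$ placed before the whole of $\cC\setminus\cC_1.$ Concretely, writing $N_1=\#\cC_1$ and $N=\#\cC,$ I would choose any order $o$ on $\cC$ whose restriction to $\cC_1$ uses the labels $1,\dots,N_1$ and is weakly decreasing in codimension, and whose restriction to $\cC\setminus\cC_1$ uses the labels $N_1+1,\dots,N$ and is again weakly decreasing in codimension. Such orders exist trivially, by sorting each block by decreasing codimension, and by construction every element of $\cC_1$ precedes every element of $\cC\setminus\cC_1.$ It then remains only to check that $o$ is an intersection-order on $\cC.$

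For this I would verify the defining inequality $o(B_1\cap B_2)\le\max(o(B_1),o(B_2))$ for an arbitrary \ncnt\ pair $B_1,B_2\in\cC.$ Since $\cC$ is closed under non-transversal intersection, $B_1\cap B_2\in\cC,$ and as the intersection of a \ncnt\ pair it has strictly larger codimension than either $B_1$ or $B_2.$ If both $B_1$ and $B_2$ lie in $\cC_1,$ then $B_1\cap B_2\in\cC_1$ as well, because $\cC_1$ is itself closed under non-transversal intersection; the first block is size-ordered and $B_1\cap B_2$ has the largest codimension of the three, so it receives the smallest label and the condition holds. Otherwise, relabelling so that $B_2$ is the later of the two, at least one face lies in $\cC\setminus\cC_1$ and hence $B_2\in\cC\setminus\cC_1$ with $\max(o(B_1),o(B_2))=o(B_2).$ Here the decisive point is where the intersection lands: if $B_1\cap B_2\in\cC_1$ then it precedes the entire second block, so $o(B_1\cap B_2)<o(B_2);$ if instead $B_1\cap B_2\in\cC\setminus\cC_1,$ then it and $B_2$ both lie in the size-ordered second block, and since $\codim(B_1\cap B_2)>\codim(B_2),$ weak monotonicity of codimension again forces $o(B_1\cap B_2)<o(B_2).$ In every case the intersection-order inequality holds, so $o$ is an intersection-order; by Proposition~\ref{ScatProd.1} the resulting blow-up does not even depend on the particular choice made.

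The step I expect to be the crux is exactly this case distinction for \ncnt\ pairs straddling the two blocks. The subtlety is that $\cC\setminus\cC_1$ need \emph{not} itself be closed under non-transversal intersection, so the intersection of two faces of $\cC$ may fall back into $\cC_1.$ This is precisely why I would insist on \emph{size}-orders, rather than arbitrary intersection-orders, within each block: sorting by codimension guarantees that a deeper intersection surfacing inside the second block still precedes the faces producing it, while an intersection dropping into $\cC_1$ is handled automatically by the block precedence. Any attempt to use a general intersection-order on $\cC\setminus\cC_1$ would lose control of this second-block case, since the relevant intersection is that of one first-block face with one second-block face and is not governed by the intersection-order condition internal to $\cC\setminus\cC_1.$
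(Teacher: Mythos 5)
Your proof is correct and takes essentially the same route as the paper: the paper builds its order by taking one global size-order on $\cC$ and shifting the labels of $\cC\setminus\cC_1$ upward by $N=\max(o)$, which yields precisely the kind of order you construct by concatenating two block size-orders, and its verification rests on the same two points as yours (closure of $\cC_1$ under non-transversal intersection disposes of pairs inside the first block, while block precedence together with the size-order on the second block disposes of the straddling cases). Your closing observation about why size-orders, rather than arbitrary intersection-orders, are needed within the blocks is a correct and worthwhile gloss on the same argument.
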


\begin{proof} Let $o$ be a size-order on $\cC$ and consider the new order
  $o'$ on $\cC$ defined by $o'(B) = o(B)$ if $b\in \cC_1,$ $o'(B) = o(B) +
  N$ otherwise, where $N=\max(o).$ Then $o'$ has the desired property that every
  element of $\cC_1$ comes before every element of $\cC\setminus \cC_1.$
  Moreover, $o'$ must be an intersection-order. Thus we wish to show that
  if $B_1$ and $B_2$ are \ncnt then
  \begin{equation}\label{e11.21.7.8}
o'(B_1) < o'(B_2) < o'(B_1\cap B_2)
  \end{equation}
is not possible. This certainly cannot happen unless $B_1,$ $B_2 \in
\cC_1,$ $B_1\cap B_2\in \cC\setminus \cC_1,$ because $o'$ restricts to give
a size-order on each of $\cC_1$ and $\cC\setminus \cC_1.$ However, if $B_1$
and $B_2$ lie in $\cC_1$ then so does $B_1\cap B_2$ because $\cC_1$ is
closed under non-transversal intersection. Thus \eqref{e11.21.7.8} is
indeed impossible.
\end{proof}

\begin{corollary}\label{ScatProd.45} If
  $\mathcal{C}_1\subset\mathcal{C}_2$ are two collections of boundary faces 
  of $M,$ both closed under non-transversal intersection, then there is an
  iterated blow-down map 
\begin{equation}
[M;\mathcal{C}_2]\longrightarrow [M;\mathcal{C}_1].
\label{ScatProd.46}\end{equation}
\end{corollary}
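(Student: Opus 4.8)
The plan is to reduce the statement to a suitable choice of intersection-order and then read off the map from the factorisation of the blow-up. Concretely, I would apply Lemma~\ref{l1.21.7.8} with $\cC=\mathcal{C}_2$ and the given $\mathcal{C}_1$ to produce an intersection-order $o$ on $\mathcal{C}_2$ in which every element of $\mathcal{C}_1$ precedes every element of $\mathcal{C}_2\setminus\mathcal{C}_1$. By Proposition~\ref{ScatProd.1} the space $[M;\mathcal{C}_2]$ is canonically diffeomorphic to the iterated blow-up of $M$ carried out in this particular order, so there is no loss in computing it this way.

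The next step is to recognise the intermediate stage of this blow-up. First I would check that the restriction $o|_{\mathcal{C}_1}$ is again an intersection-order on $\mathcal{C}_1$: if $B_1,B_2\in\mathcal{C}_1$ are \ncnt, then $B_1\cap B_2\in\mathcal{C}_1$ since $\mathcal{C}_1$ is closed under non-transversal intersection, and the inequality $o(B_1\cap B_2)\le\max(o(B_1),o(B_2))$ holds simply because $o$ is an intersection-order on the larger collection $\mathcal{C}_2$. Because all of $\mathcal{C}_1$ is blown up before anything else in the order $o$, the result of performing just these blow-ups is, by Definition~\ref{ScatProd.6}, canonically the space $[M;\mathcal{C}_1]$, appearing as an intermediate stage of the construction of $[M;\mathcal{C}_2]$.

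Finally, the remaining blow-ups — those of the lifts of the elements of $\mathcal{C}_2\setminus\mathcal{C}_1$, in the order induced by $o$ — proceed from $[M;\mathcal{C}_1]$ and terminate at $[M;\mathcal{C}_2]$; here I would use that boundary faces lift to boundary faces under blow-up of a boundary face, so each such stage is again a genuine blow-up with its own blow-down map. Composing the blow-down maps of these later stages yields the desired iterated blow-down map \eqref{ScatProd.46}, and it is evidently the extension by continuity of the identity on the common interior. The one point needing care is that the intermediate space be identified with $[M;\mathcal{C}_1]$ itself, and not with some order-dependent variant: this is exactly what Proposition~\ref{ScatProd.1} provides once $o|_{\mathcal{C}_1}$ is known to be an intersection-order, and so the closure of $\mathcal{C}_1$ under non-transversal intersection is the hypothesis that makes the whole argument go through.
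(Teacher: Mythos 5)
Your proposal is correct and follows essentially the same route as the paper: the paper's proof simply invokes Lemma~\ref{l1.21.7.8} to put all of $\mathcal{C}_1$ first in an intersection-order on $\mathcal{C}_2$ and then asserts the blow-down map exists. Your additional verifications \mhy\ that the restricted order is an intersection-order on $\mathcal{C}_1,$ that Proposition~\ref{ScatProd.1} identifies the intermediate stage with $[M;\mathcal{C}_1],$ and that the remaining blow-ups compose to give the map \mhy\ are exactly the details the paper leaves implicit in ``follows immediately.''
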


\begin{proof} By the preceding lemma, there is an intersection-order on
  $\cC_2$ with respect to which all elements of $\cC_1$ come first. The
  existence of the blow-down map follows immediately from this.
\end{proof}

We will use the freedom to reorder blow ups frequently below. For instance
if $\mathcal{C}$ is closed under non-transversal intersection then any
given element is first or last in some intersection-order. In fact if the
elements are first given a size-order then any one element can be moved to
any other point in the order and the result is an
intersection-order. Another use of the freedom to change order established
above is to examine the intersection properties of boundary faces, as in
Lemma~\ref{ScatProd.34}, but after a sequence of boundary blow ups.

\begin{proposition}\label{ScatProd.39}
If $B_1,$ $B_2$ are distinct
  boundary faces of $M$ and $\mathcal{C}\subset\mathcal{M}(M)$ is closed under
  non-transversal intersection then
\begin{enumerate}
\item The lifts of $B_1$ and $B_2$ to $[M;\mathcal{C}]$ are disjoint if
  they are disjoint in $M$ or there exists $B\in\mathcal{C}$ 
  satisfying \eqref{24.5.2008.147}.
\item The lifts of $B_1$ and $B_2$ meet transversally in $[M;\mathcal{C}]$ if
they are transversal in $M$ or there exists $B\in\mathcal{B}$
  satisfying \eqref{24.5.2008.146}.
\item If $B_1\subset B_2$ in $M$ then this remains true for the lifts to
  $[M;\mathcal{C}]$ if
\begin{equation}
B\in\mathcal{C},\ B_1\subset B\Longrightarrow B_2\subset B\Mor
B_2\pitchfork B.
\label{ScatProd.40}\end{equation}
\end{enumerate}
\end{proposition}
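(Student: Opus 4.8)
The plan is to prove all three assertions by induction on the number of elements of $\mathcal{C}$, using the freedom in choice of intersection-order granted by Proposition~\ref{ScatProd.1} to reduce each case to a single blow-up, after which Lemma~\ref{ScatProd.34} and Corollary~\ref{24.5.2008.143} apply directly. The key reduction is this: since $[M;\mathcal{C}]$ is independent of the intersection-order, I may arrange whichever element $B\in\mathcal{C}$ is relevant to be blown up \emph{last}, so that $[M;\mathcal{C}] = [[M;\mathcal{C}\setminus\{B\}];\tilde B]$ and the final blow-up is of a single boundary face. One subtlety must be dealt with first: Lemma~\ref{ScatProd.34} concerns the blow-up of a boundary face, but after the preceding blow-ups the centre $\tilde B$ and the lifts of $B_1,B_2$ are boundary faces of the \emph{intermediate} manifold, so I must confirm that the intersection relations named in \eqref{24.5.2008.147} and \eqref{24.5.2008.146} are read off correctly against that intermediate manifold; the hypotheses as stated refer to the codimensions and containments in $M$ itself.

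\textbf{Disjointness (i).} If $B_1$ and $B_2$ are already disjoint in $M$ they remain so, since blow-up is a diffeomorphism away from the centre and disjointness is preserved (a boundary blow-up can only separate, never join, the lifts of boundary faces). If instead there is some $B\in\mathcal{C}$ with \eqref{24.5.2008.147}, I choose an intersection-order on $\mathcal{C}$ in which $B$ comes last; this is possible by the remark following Corollary~\ref{ScatProd.45} that any element may be moved to the end. Writing $M' = [M;\mathcal{C}\setminus\{B\}]$, the lifts of $B_1,B_2$ to $M'$ still satisfy $B_1\cap B_2\subset B$ with $B_1\setminus B$ and $B_2\setminus B$ nonempty—here I must verify these relations persist, which follows because containment in $B$ and the nonemptiness of the complements are detected by boundary defining functions that are unchanged by blow-ups of other centres. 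Then Corollary~\ref{24.5.2008.143}, equation~\eqref{24.5.2008.147}, gives disjointness in $[M';\tilde B] = [M;\mathcal{C}]$.

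\textbf{Transversality (ii).} The argument is parallel. Transversality in $M$ is preserved under every boundary blow-up by Lemma~\ref{ScatProd.34}(i), so that case is immediate. Otherwise, with $B$ satisfying \eqref{24.5.2008.146}, I again order $B$ last and apply Corollary~\ref{24.5.2008.143}, which states precisely that \eqref{24.5.2008.146} forces the lifts to be transversal in $[M;B]$. For \textbf{(iii)}, suppose $B_1\subset B_2$ in $M$ and \eqref{ScatProd.40} holds. Here I induct through the whole order rather than isolating one blow-up: I must show the containment $\tilde B_1\subset\tilde B_2$ survives each successive blow-up. By Lemma~\ref{ScatProd.34}(ii), a comparable pair can fail to remain comparable only when the centre $B$ satisfies $B_1\subset B=B_2$ or $B_1\subset B$ with $B\setminus B_2\neq\emptyset$; equivalently, comparability is preserved exactly when $B_1\subset B\Rightarrow (B_2\subset B$ or $B_2\pitchfork B)$, which is \eqref{ScatProd.40}. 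So the condition \eqref{ScatProd.40} is designed precisely to exclude the bad cases of Lemma~\ref{ScatProd.34}(ii) at every stage.

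\textbf{The main obstacle.} The delicate point throughout is the persistence of the intersection hypotheses under the preliminary blow-ups of $\mathcal{C}\setminus\{B\}$. Lemma~\ref{ScatProd.34} is stated for a single blow-up, and its conclusions are governed by inclusions and the boundary-hull operation $\dotplus$ \emph{in the ambient manifold}; after several blow-ups, the boundary-hull of the lifts need not lift the boundary-hull computed in $M$ (indeed the warning after \eqref{ScatProd.43} flags exactly this). I expect the real work to lie in checking that when the relevant $B$ is commuted to the last position, the lifts of $B_1,B_2$ to $M'=[M;\mathcal{C}\setminus\{B\}]$ still stand in the inclusion relation to $\tilde B$ recorded by \eqref{24.5.2008.147} or \eqref{24.5.2008.146}. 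This is where the closure of $\mathcal{C}$ under non-transversal intersection is used: it guarantees, via the analysis in the proof of Lemma~\ref{ScatProd.41}, that non-transversal intersections among lifts are themselves lifts of elements of $\mathcal{C}$, so the relevant containments are never destroyed before $B$ is blown up.
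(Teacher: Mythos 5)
Your strategy for parts (i) and (ii) \mhy\ commute the distinguished element $B$ to the \emph{end} of the intersection-order and apply Corollary~\ref{24.5.2008.143} to the final blow-up \mhy\ founders on exactly the point you flag as the main obstacle, and the patch you offer is false. The hypotheses \eqref{24.5.2008.147} and \eqref{24.5.2008.146} are relations in $M$, and they do \emph{not} persist to the lifts in $M'=[M;\mathcal{C}\setminus\{B\}]$: containments among lifted boundary faces are genuinely destroyed by earlier blow-ups (that is the content of parts (ii) and (iii) of Lemma~\ref{ScatProd.34}), and closure of $\mathcal{C}$ under non-transversal intersection does not prevent this. Concretely, take $M=[0,1)^4$ with boundary hypersurfaces $F_i=\{x_i=0\},$ and set $B_1=F_1\cap F_2\cap F_3,$ $B_2=F_1\cap F_4,$ $B=F_1\cap F_2,$ $\mathcal{C}=\{B_1,B\}.$ Then $\mathcal{C}$ is closed under non-transversal intersection (its two elements are comparable), and $B_1\subset B\subsetneq B_1\dotplus B_2=F_1,$ so \eqref{24.5.2008.146} holds in $M.$ But any order with $B$ last forces $B_1$ to be blown up first, and by \eqref{ScatProd.38} (centre $B_1,$ with $B_1\subset B_1\subsetneq B$) the lifts of $B_1$ and $B$ are \emph{transversal} in $[M;B_1],$ so $\tilde B_1\not\subset \tilde B$ and neither alternative in \eqref{24.5.2008.146} holds for the lifts in $M';$ your final appeal to Corollary~\ref{24.5.2008.143} therefore has no hypotheses to work with. (The conclusion is of course still true \mhy\ in this example because the first blow-up happens to create transversality of $\tilde B_1$ and $\tilde B_2$ directly \mhy\ but your argument does not deliver it, and showing that failure of persistence always forces transversality to have been created earlier is a genuinely new argument, since a containment can also be destroyed by becoming \ncnt.) The paper's proof avoids all of this by putting $B$ \emph{first}: any element of a collection closed under non-transversal intersection can be moved to the front of a size-order and the result is still an intersection-order, Corollary~\ref{24.5.2008.143} is then applied once in $M$ itself where the hypotheses are given, and afterwards one needs only the easy facts that disjointness, and transversality by Lemma~\ref{ScatProd.34}(i), persist under all subsequent boundary blow-ups.

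The same conflation undermines your treatment of (iii). You assert that \eqref{ScatProd.40} excludes the bad cases of Lemma~\ref{ScatProd.34}(ii) at every stage, but \eqref{ScatProd.40} constrains relations in $M,$ whereas at a later stage the relevant question is whether the lift of the current centre contains $\tilde B_1$ and how it then sits relative to $\tilde B_2;$ these relations evolve under the earlier blow-ups, so an inductive invariant has to be formulated and proved, not merely asserted. This is precisely why the paper does not induct through an arbitrary intersection-order in part (iii): it chooses a particular one, blowing up first the elements of $\mathcal{C}$ containing $B_1$ \mhy\ which by \eqref{ScatProd.40} either contain $B_2$ or are transversal to it, and which form a collection closed under non-transversal intersection \mhy\ organized via the transversal components of Lemma~\ref{24.5.2008.148}; once the minimal element of each component is blown up, the remaining centres no longer contain the lift of $B_1,$ so every subsequent blow-up falls into a harmless case of \eqref{ScatProd.36}. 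Your outline could likely be repaired along these lines, but as written the key persistence claims carrying the whole argument are unproved and, in the form stated, false.
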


\begin{proof} We can assume that $B_1$ and $B_2$ are both proper boundary
  faces. If there exists an element of $\mathcal{C}$ satisfying
  \eqref{24.5.2008.147} then, as noted above, there is an
  intersection-order on $\mathcal{C}$ in which a given element comes
  first. Lemma~\ref{ScatProd.34} shows that blowing it up first separates
  $B_1$ and $B_2$ which thereafter must remain disjoint. Thus shows the
  sufficiency of ~\ref{ScatProd.34}.

As above, if there is an element of $\mathcal{C}$ satisfying
\eqref{24.5.2008.146} then it can be blown up first in an
intersection-order which makes $B_1$ and $B_2$ transversal; then
Lemma~\ref{ScatProd.34} shows that persists under subsequent blow
up.

In the third part of the Proposition the sufficiency of the condition
follows immediately from Lemma~\ref{ScatProd.36} since the elements of
$\mathcal{C}$ of codimension two or greater containing $B_1,$ and so by
hypothesis either containing $B_2$ or transversal to it, form a collection
closed under non-transversal intersection. In fact this is separately true
of those containing $B_2$ and those which contain $B_1$ but are transversal
to $B_2$ since no intersection of the latter can contain $B_2.$ So all
these blow ups can be done first. In each case once the minimal element of
a transversal component is blown up the other elements do not contain $B_1$
so all blow ups preserve the inclusion of $B_1$ in $B_2.$
\end{proof}

\begin{lemma}\label{24.5.2008.150} For any two boundary faces $B_1,$
  $B_2\in\mathcal{C},$ with lifts denoted $\tilde B_i,$ $i=1,2$ it is
  always the case that 
\begin{equation}
\tilde B_1\cap \tilde B_2\subset \widetilde{B_1\cap B_2}\Min[M;\mathcal{C}].
\label{24.5.2008.151}\end{equation}
\end{lemma}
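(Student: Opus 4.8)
\emph{Overall approach.} The natural framework is induction on the number of blow-ups, with the single–blow-up statement as the base case. For one blow-up of a boundary face $B$, working in the coordinates $t_B,t_j$ of the proof of Lemma~\ref{ScatProd.34} (so $B_i=\{x_j=0:j\in I_i\}$, $B=\{x_j=0:j\in I\}$, and $B_1\cap B_2$ has index set $I_1\cup I_2$), one checks directly that $\tilde B_1\cap\tilde B_2\subseteq\widetilde{B_1\cap B_2}$, with \emph{equality} in cases (a) and (c) of that proof, the intersection dropping to $\emptyset$ precisely when $B_1\setminus B,\,B_2\setminus B\neq\emptyset$ and $B_1\cap B_2\subseteq B$ (by \eqref{24.5.2008.159}), and possibly becoming a strict subset only in the mixed case (b) where one of the faces lies in $B$ and the other does not. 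The difficulty — and the main obstacle — is that iterating this naively would require that blow-up preserve an inclusion $F\subseteq G$ of boundary faces, and this is \emph{false} in general (e.g.\ if $F$ equals the centre, its lift is the whole front face, which need not sit inside the lift of $G$). The remedy is to avoid relying on inclusion-preservation by splitting according to the mutually exclusive relation of $B_1$ and $B_2$ in $M$: comparable, transversal, or \ncnt.

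\emph{The easy cases.} If $B_1$ and $B_2$ are comparable, say $B_1\subseteq B_2$, then $B_1\cap B_2=B_1$, so $\widetilde{B_1\cap B_2}=\tilde B_1\supseteq\tilde B_1\cap\tilde B_2$ and \eqref{24.5.2008.151} is immediate. If $B_1\pitchfork B_2$, then by Corollary~\ref{24.5.2008.143} (and its persistence form Proposition~\ref{ScatProd.39}) the lifts stay transversal or become disjoint under each blow-up. Here I would carry the stronger invariant that, at every intermediate stage, $\tilde B_1\cap\tilde B_2$ equals $\widetilde{B_1\cap B_2}$ or is empty: since $I_1\cap I_2=\varnothing$, the strict sub-case of the single–blow-up computation cannot occur (it needs $I_2\cap I\neq\varnothing$ with $I\subseteq I_1$), so each blow-up either preserves the equality of these three boundary faces or empties the intersection, and once empty it stays empty. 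Either way \eqref{24.5.2008.151} holds in the limit.

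\emph{The crux: $B_1$ and $B_2$ are \textnormal{\ncnt}.} This is where the closure hypothesis is essential. By \eqref{ScatProd.43}, $B_1\cap B_2\in\mathcal{C}$, and since neither face contains the other, $\codim(B_1\cap B_2)>\codim(B_i)$ strictly. Using Lemma~\ref{ScatProd.41} and Proposition~\ref{ScatProd.1}, I would compute $[M;\mathcal{C}]$ with a size-order, so that $B_1\cap B_2$ is blown up \emph{before} both $B_1$ and $B_2$. Every centre preceding $B_1\cap B_2$ then has codimension $\geq\codim(B_1\cap B_2)$, hence strictly exceeds $\codim(B_i)$, so neither $B_i$ can be contained in it; by Lemma~\ref{ScatProd.34}(iii) the pair therefore stays in the ``both faces meet the complement of the centre'' case, and — because no such earlier centre contains $B_1\cap B_2$ — they never separate early and their lifts meet exactly in the lift of $B_1\cap B_2$. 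At the step where $\widetilde{B_1\cap B_2}$ is itself blown up, condition \eqref{24.5.2008.147} is met ($\tilde B_1\cap\tilde B_2$ lies in the centre while $\tilde B_i\setminus\widetilde{B_1\cap B_2}\neq\varnothing$, this last surviving from $B_i\setminus(B_1\cap B_2)\neq\varnothing$ under proper transform), so the two lifts become disjoint and remain so under all subsequent blow-ups by Proposition~\ref{ScatProd.39}. Hence $\tilde B_1\cap\tilde B_2=\varnothing\subseteq\widetilde{B_1\cap B_2}$ in $[M;\mathcal{C}]$, completing this case and the proof.
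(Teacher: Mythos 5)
Your proof is correct, and it takes a genuinely different route from the paper's. The paper never splits on the mutual relation of $B_1$ and $B_2$: it decomposes $\mathcal{C}$ itself into the elements which do not and which do contain $B_1\cap B_2,$ blows up the former first (checking that the invariant ``intersection of lifts equals lift of the intersection'' survives those blow-ups), and then runs a nested induction over the transversal components of the remainder (Lemma~\ref{24.5.2008.148}), the delicate case being a minimal centre containing exactly one of $B_1,$ $B_2.$ Your trichotomy localizes the work differently: the comparable case is observed to be trivial (a point the paper's argument never exploits); the transversal case is carried by the sharper ``equal or empty'' invariant, which does persist through each blow-up by the index-set computation in the proof of Lemma~\ref{ScatProd.34}, precisely because $I\cap I_2\subset I_1\cap I_2=\emptyset$ rules out the strict sub-case of (b); and the \ncnt\ case is where the closure hypothesis and the reordering freedom of Proposition~\ref{ScatProd.1} enter. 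Two remarks. First, your \ncnt\ case can be compressed: $B=B_1\cap B_2\in\mathcal{C}$ satisfies \eqref{24.5.2008.147}, so part (1) of Proposition~\ref{ScatProd.39}, which precedes this lemma in the paper, already gives that the lifts are disjoint in $[M;\mathcal{C}]$; your size-order tracking essentially re-proves that statement (whose own proof is simpler still, putting $B_1\cap B_2$ \emph{first} in an intersection-order rather than merely before $B_1$ and $B_2$). Second, the persistence assertions you make across several blow-ups (that earlier centres in a size-order, having strictly larger codimension, can never come to contain the lifts of $B_1,$ $B_2$ or $B_1\cap B_2,$ so the pair never separates before $B_1\cap B_2$ is blown up) are stated at the same level of rigor as the corresponding one-sentence claims inside the paper's proofs of Lemma~\ref{ScatProd.41} and Proposition~\ref{ScatProd.1}, so this is not a gap by the paper's own standard; note that both you and the paper implicitly use that the canonical diffeomorphism of Proposition~\ref{ScatProd.1} identifies lifts computed with respect to different intersection-orders, which is what makes ``choose a convenient order'' a legitimate move for a statement about $[M;\mathcal{C}].$
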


\begin{proof} Consider the decomposition
  $\mathcal{C}=\mathcal{C}'\cup\mathcal{C}''$ into the collections of
  elements which do not contain $B_1\cap B_2$ and those which do contain
  it. These must separately be closed under non-transversal
  intersection. Under blow up of an element of $\mathcal{C}',$ $B_1,$ $B_2$
  and $B_1\cap B_2$ all lift to the closure of their complements with
  respect to the centre so the lift of the intersection is the intersection
  of the lifts. Moreover the other elements of $\mathcal{C}'$ lift not to
  contain the intersection while the elements of $\mathcal{C}'$ lift to
  contain it. Thus after blowing up all the elements of $\mathcal{C}'$ we
  are reduced to the case that $\mathcal{C}'$ is empty, so we may assume
  that $B_1\cap B_2$ is contained in each element of $\mathcal{C}.$ Now,
  consider the decomposition of $\mathcal{C}$ as in
  Lemma~\ref{24.5.2008.148}. Consider the effect of the blow up of the
  minimal element $A_1\in\mathcal{C}_1.$ Now the lift of $B_1\cap B_2$ to
  $[M,A_1]$ is its preimage, the lifts of $B_1$ and $B_2$ depend on whether
  they are, or are not, contained in $A_1$ but in any case
  \eqref{24.5.2008.151} holds after this single blow up. If $A_1$ contains
  neither $B_1$ nor $B_2$ then by \eqref{24.5.2008.147} the lifts are
  disjoint and we need go no further. On the other hand if $A_1\supset
  B_1\cup B_2$ then all three manifolds lift to their preimages and
  equality of intersection of lifts and the lift of the intersections
  persists. The lifts of the other elements of $\mathcal{C}_1$ contain no
  fibres of the front face of $[M;A_1]$ over $A_1$ and so cannot contain
  the intersection. Hence these blow ups again preserve the equality. The
  only case remaining is where $A_1$ contains one, but not both, of $B_1$
  and $B_2.$ We can assume that $A_1\supset B_1$ and then all the elements
  of $\mathcal{C}_1$ satisfy this. Let
  $\mathcal{C}_1=\mathcal{C}_1'\cup\mathcal{C}_1''$ be the decomposition
  into those (before blow up of $A_1)$ which do not contain $B_2$ and do
  contain $B_2,$ where the second collection may be empty. Blowing up in
  size-order for each of these subcollections, observe that after the blow
  up of $A_1,$ the other elements of $\mathcal{C}_1'$ lift to the closures
  of their complements with respect to $A_1$ and hence cannot contain
  fibres of $A_1$ and hence cannot contain the lift of $B_1$ or the
  intersection. They intersection of the lifts in $[M;A_1]$ is the
  intersection of the lift of $B_2$ and the front face. No other element of
  $\mathcal{C}_1'$ can contain this, since then it would contain $B_2$
  contrary to assumption. Thus the elements of $\mathcal{C}_1'$ lifted to
  $[M;A_1]$ do not contain the intersection of the lifts of $B_1$ and $B_2$
  so after their blow up the inclusion \eqref{24.5.2008.151} still
  holds. On the other hand the elements of $\mathcal{C}_1''$ do contain the
  lift of $B_2$ and continue to do so after all elements of
  $\mathcal{C}_1'$ have been blown up. They therefore contain the
  intersection of the lifts but cannot contain the lift of $B_1.$ Again
  $\mathcal{C}_1''$ can be decomposed using Lemma~\ref{24.5.2008.148} and a
  minimal element can be blown up. For the elements which contain this
  minimal one the argument now proceed as for $A_1$ and $\mathcal{C}_1$
  above, except that the lift of $B_1$ can never be contained in these
  centres. This means that \eqref{24.5.2008.151} holds at the end of the
  blow up of one of the transversal parts of $\mathcal{C}_1''.$ However the
  other transversal components lift under these blow ups to their
  preimages, so they contain the lift of $B_2$ but not of $B_1$ and the
  argument can be repeated. Thus at the end of the blow up of
  $\mathcal{C}_1,$ \eqref{24.5.2008.151} holds. However the other
  transversal components of $\mathcal{C}$ again lift to their preimages so
  contain the intersection of the lift of $B_1$ and $B_2$ (and even the
  lift of the intersection). So the argument above for $\mathcal{C}_1$ can
  be repeated a finite number of times to finally conclude that
  \eqref{24.5.2008.151} remains true in $[M;\mathcal{C}].$
\end{proof}

\section{Boundary configuration spaces}\label{b-conf}

Let $X$ be a compact manifold with boundary and consider $M=X^n$ for some
$n\ge2.$ The boundary faces of $X^n$ are just $n$-fold products with each
factor either $X$ or a component of its boundary. Define
$\mathcal{B}_{\bo}\subset\mathcal{M}_{(2)}(X^n)$ to be equal to
$\mathcal{M}_{(2)}(X^n)$ if the boundary of $X$ is connected, otherwise to
be the proper subset consisting of those $n$-fold products where each
factor is either $X$ or the same component of the boundary in the remaining
factors, and where there are at least two of these factors.

\begin{lemma}\label{ScatProd.44} The collection
  $\mathcal{B}_{\bo}\subset\mathcal{M}(X^n)$ is closed under
  non-transversal intersection.
\end{lemma}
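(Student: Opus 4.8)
The plan is to reduce the closure condition \eqref{ScatProd.43} to a combinatorial statement about the "profiles" of the boundary faces. Each $B\in\mathcal{M}(X^n)$ is a product $B=C_1\times\cdots\times C_n$ with each $C_i$ either $X$ or a component of $\partial X$; I would record this as the set $S(B)=\{i:C_i\neq X\}$ of boundary indices together with the component assigned to each $i\in S(B),$ and note that $\codim(B)=|S(B)|.$ Writing the boundary defining functions of $X^n$ as $\rho_{i,D}$ (for the $i$-th factor and component $D$ of $\partial X$), the local coordinate description of boundary faces used in Lemma~\ref{ScatProd.34} identifies $B$ with the index set $\{(i,C_i):i\in S(B)\},$ transversality being disjointness of index sets. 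I would first distill from this the two facts I need: $B\pitchfork B'$ if and only if there is no factor $i\in S(B)\cap S(B')$ at which $B$ and $B'$ specify the \emph{same} component; and when $B\cap B'\neq\emptyset$ its profile is the "join", carrying the common component on $S(B)\cap S(B'),$ and $C_i,$ $C_i'$ respectively on $S(B)\setminus S(B')$ and $S(B')\setminus S(B).$ In particular $\codim(B\cap B')=|S(B)\cup S(B')|\geq\codim(B)$ whenever the intersection is non-empty.

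With these in hand the connected case is immediate: here $\mathcal{B}_{\bo}=\mathcal{M}_{(2)}(X^n),$ and if $B,B'$ are not transversal then $B\cap B'$ is a non-empty boundary face of codimension at least $2,$ hence again lies in $\mathcal{M}_{(2)}(X^n).$

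For the disconnected case, where $\mathcal{B}_{\bo}$ consists of the faces all of whose boundary factors are a \emph{single} component $D$ and which have at least two such factors, I would split on whether $B$ (component $D$) and $B'$ (component $D'$) use the same component. If $D\neq D'$ then any shared boundary index forces conflicting component assignments, so $B\cap B'=\emptyset,$ while no shared index gives disjoint conormal directions; either way $B\pitchfork B'$ and there is nothing to verify. If $D=D'$ then failure of transversality forces $S(B)\cap S(B')\neq\emptyset,$ and then the join profile of $B\cap B'$ carries the single component $D$ on $S(B)\cup S(B'),$ a set of size at least $2,$ so $B\cap B'\in\mathcal{B}_{\bo}.$

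The one point demanding care — and the step I expect to be the main obstacle — is the transversality bookkeeping in the disconnected case: one must use that distinct components of $\partial X$ are disjoint in $X,$ so that conflicting component assignments make the intersection \emph{empty} (which counts as transversal under the convention adopted here), rather than naively reading off non-transversality from a shared factor index. Once this is handled, the verification of \eqref{ScatProd.43} is exactly the case analysis above.
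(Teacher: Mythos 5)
Your proposal is correct and follows essentially the same route as the paper: split according to whether the two faces of $\mathcal{B}_{\bo}$ carry the same boundary component of $X$, use that distinct components of $\partial X$ are disjoint (since $X$ has no corners, so conflicting component assignments force an empty, hence transversal, intersection), and note that in the same-component case the intersection is again a face of the required type. One small caveat: your general criterion for $B\pitchfork B'$ is overstated for arbitrary faces of $X^n$ when $\partial X$ is disconnected (a pair can share an index carrying the \emph{same} component and still be disjoint, hence transversal, because of a conflict at another index), but this mixed situation cannot occur for two elements of $\mathcal{B}_{\bo}$, so every instance in which you invoke the criterion is valid.
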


\begin{proof} The intersection of two elements where all boundary factors
  arise from the same boundary component of $X$ are certainly in
  $\mathcal{B}_{\bo}.$ So consider two elements $B_1,$ $B_2$ of
  $\mathcal{B}_{\bo}$ with different boundary components, $A_1\subset X$ for
  the first and $A_2\subset X$ for the second. Then $A_1\cap
  A_2=\emptyset,$ since $X$ is a manifold with boundary, so has no
  corners. Thus if different boundary components occur in any one factor in
  $B_1$ and $B_2$ then $B_1\cap B_2=\emptyset.$ The only remaining case is
  when each boundary factor in $B_1$ corresponds to a factor of $X$ in
  $B_2$ and then the intersection is transversal. Thus $\mathcal{B}_{\bo}$
  is closed under non-transversal intersection.
\end{proof}

\begin{definition}\label{ScatProd.3} The $n$-fold b-stretched
  product of $X$ is defined to be  
\begin{equation}
X^n_{\bo}=[X^n;\mathcal{B}_{\bo}].
\label{ScatProd.4}\end{equation}
\end{definition}
\noindent This definition relies on Proposition~\ref{ScatProd.1} and
Definition~\ref{ScatProd.6} to make it meaningful. Boundary faces of
codimension one, or indeed the whole of $X^n,$ could be included since
blow up of these `boundary faces' is to be interpreted as the trivial operation.

\begin{remark}\label{ScatProd.42} We will generally concentrate on the case
  that $X$ has one boundary component so \eqref{ScatProd.4} amounts to
  blowing up all the boundary faces; in this case $\cB_{\bo}=\cB_{(2)}.$
  Even if the boundary of $X$ is not connected then blowing up all elements
  of $\cB_{(2)}=\mathcal{M}_{(2)}(X^n),$ in an intersection order, is
  perfectly possible. The result may be called the `overblown' product
\begin{equation}
X^n_{\ob}=[X^n;\cB_{(2)}(X^n)],\ \pa X\text{ not connected.}
\label{ScatProd.49}\end{equation}
Since we are mainly interested in considering the resolution of
diagonals, the smaller manifold in \eqref{ScatProd.4} is more appropriate here.
\end{remark}

Next we give a more significant application of Proposition~\ref{ScatProd.1}.

\begin{proposition}\label{ScatProd.5} If $m<n,$ each of the projections off
$n-m$ factors of $X,$ $\pi:X^n\longrightarrow X^m,$ fixes a unique
  `b-stretched projection' $\pi_{\bo}$ giving a commutative diagramme
\begin{equation}
\xymatrix{
X^n_{\bo}\ar[r]^{\pi_{\bo}}\ar[d]_{\beta}& X^m_{\bo}
\ar[d]^{\beta }\\
X^n\ar[r]_{\pi}&X^m}
\label{9.1.2008.1}\end{equation}
and furthermore $\pi_{\bo}$ is a b-fibration.
\end{proposition}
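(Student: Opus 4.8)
The plan is to reduce the statement to a single elementary projection and then to verify the three defining conditions of a b-fibration in local blow-up coordinates. By the permutation-invariance of the construction I may assume that $\pi$ projects off the last $n-m$ factors, so that $\pi=\pi_{m+1}\circ\cdots\circ\pi_n$ is a composite of one-factor projections $\pi_k\colon X^k\to X^{k-1}$. Since b-fibrations compose and the two squares of \eqref{9.1.2008.1} then stack into the required diagramme, it suffices to treat $m=n-1$ and the map $\pi_n$. Recalling the splitting $\mathcal{B}_{\bo}=\mathcal{B}^{\ver}\sqcup\mathcal{B}^{\nver}$ into vertical faces $B=\pi_n(B)\times X$ and non-vertical faces (whose $n$th factor is $\pa X$), the observation \eqref{24.5.2008.106} gives $X^{n-1}_{\bo}\times X=[X^n;\mathcal{B}^{\ver}]$, because blow-up of $B'\times X$ inside $X^{n-1}\times X$ leaves the last factor untouched.

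The collection $\mathcal{B}^{\ver}$ is closed under non-transversal intersection inside $\mathcal{B}_{\bo}$: the intersection of two vertical faces again carries $X$ in the last slot and lies in $\mathcal{B}_{\bo}$ by Lemma~\ref{ScatProd.44}. Hence Lemma~\ref{l1.21.7.8} supplies an intersection-order on $\mathcal{B}_{\bo}$ in which all vertical faces precede all non-vertical ones, and Corollary~\ref{ScatProd.45} yields an iterated blow-down $\beta_{\nver}\colon X^n_{\bo}=[X^n;\mathcal{B}^{\ver};\mathcal{B}^{\nver}]\to[X^n;\mathcal{B}^{\ver}]=X^{n-1}_{\bo}\times X$. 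I define $\pi_{n,\bo}$ to be the composite of $\beta_{\nver}$ with the projection $\operatorname{pr}\colon X^{n-1}_{\bo}\times X\to X^{n-1}_{\bo}$. Commutativity of \eqref{9.1.2008.1} is immediate on the dense interiors, where all blow-downs are diffeomorphisms; since $\pi_{n,\bo}$ is continuous this fixes it uniquely, so both commutativity and the asserted uniqueness follow once smoothness and the b-fibration property are established.

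It remains to prove that $\pi_{n,\bo}=\operatorname{pr}\circ\beta_{\nver}$ is a b-fibration, which is the crux. The map $\operatorname{pr}$ is itself a b-fibration, being projection off a product factor, so I argue by induction on the non-vertical blow-ups, maintaining at each stage a b-fibration $g\colon M'\to X^{n-1}_{\bo}$ and blowing up the next lifted non-vertical centre $C$, the lift of some $B=B'\times\pa X$ with $B'\in\cM(X^{n-1})$. The single structural fact driving the argument is that $g(C)$ lies in one boundary \emph{hypersurface} $\hat B'$ of the fixed base: $g(C)$ is the lift of $B'$ to $X^{n-1}_{\bo}$, and since $B'$ has codimension at least one and, when of codimension at least two, is itself blown up in forming $X^{n-1}_{\bo}$, that lift is the resulting front face, of codimension exactly one. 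Working in the front-face coordinates $t_C,\,t_j$ introduced in the proof of Lemma~\ref{ScatProd.34} for the blow-up of $C$, the new defining function $t_C$ therefore enters the pullback under $g\circ\beta_C$ of the single boundary defining function $\rho_{\hat B'}$ and of no other; this preserves b-normality, the monomial factorization \eqref{24.5.2008.116} persists because each $g^{*}\rho'$ was already a positive multiple of a monomial and is merely multiplied by a power of $t_C$, and b-submersivity survives because the radial field $t_C\pa_{t_C}$ maps onto $\rho_{\hat B'}\pa_{\rho_{\hat B'}}$ while the surjective b-differential of $g$ is undisturbed in the complementary directions.

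The main obstacle is exactly this verification that the front faces created by non-vertical blow-ups map into the boundary of $X^{n-1}_{\bo}$ \emph{one hypersurface at a time}: b-normality would fail were $B'$ permitted to lift to a higher-codimension face of the base, and it is precisely the prior resolution of $X^{n-1}$ into $X^{n-1}_{\bo}$ that rules this out. The remaining labour is bookkeeping: one must check that the order forced by Lemma~\ref{l1.21.7.8} keeps every intermediate centre $C$ in the clean product form to which the coordinates of Lemma~\ref{ScatProd.34} apply, using Lemma~\ref{ScatProd.34} and Proposition~\ref{ScatProd.39} to control how the lifted non-vertical centres meet one another. Once this is in hand the induction closes and $\pi_{n,\bo}$ is a b-fibration.
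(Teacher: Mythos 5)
Your proposal is correct, and the construction of the map coincides with the paper's: split $\cB_{\bo}$ into vertical and non-vertical faces, observe that the vertical collection is closed under non-transversal intersection, and invoke the reordering results (Lemma~\ref{l1.21.7.8}, Corollary~\ref{ScatProd.45}) to realize $X^n_{\bo}$ as an iterated blow-up of $[X^n;\cB^{\ver}]=X^{n-1}_{\bo}\times X,$ then compose the resulting blow-down with the projection. (The paper treats general $m$ in one step instead of reducing to $m=n-1$ and composing b-fibrations; this is immaterial, and indeed Theorem~\ref{ScatProd.94} iterates in exactly your way.) Where you genuinely depart from the paper is the b-fibration verification. The paper gets b-submersivity abstractly --- blow-downs of boundary faces and projections are b-submersions, and b-submersions compose --- and then checks b-normality once, on the final space, by classifying the boundary hypersurfaces of $X^n_{\bo}$ and where each maps: hypersurfaces not produced by blow-up behave locally like $\pi,$ while each front face maps into the interior of a single hypersurface of $X^m_{\bo}$ or onto the whole base. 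You instead run an induction through the tower of non-vertical blow-ups, maintaining a b-fibration to the fixed base $X^{n-1}_{\bo}$ at every stage, powered by a one-blow-up criterion: blowing up a boundary face whose image lies in at most one boundary hypersurface of the base preserves b-fibrations, via the monomial/exponent bookkeeping you describe. The geometric input is the same as the paper's --- a non-vertical centre lies over the lift of $B'=\pi_n(B),$ which is a single hypersurface of the base precisely because the base has already been resolved --- but your packaging isolates it as a reusable lemma and makes the b-normality check systematic; the price is that you must identify the images $g(C)$ in every intermediate space, which you assert rather than prove. The assertion does hold (each blow-down maps the lift of $B$ onto its lift in the preceding space, so $g(C)$ equals the lift of $B'$ at every stage), so your argument closes at essentially the paper's level of rigor.
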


\begin{proof} The existence of $\pi_{\bo}$ follows from
  Corollary~\ref{ScatProd.45}. Namely, taking $\pi$ to be the projection
  off the last $n-m$ factors for simplicity of notation, the subcollection
  of $\mathcal{B}^{\ver(\pi)}_{\bo}$ for $X^n,$ consisting of the boundary faces of
  $X^n$ in which the last $n-m$ factors consist of $X,$ is closed under
  non-transversal intersection. Thus, using Corollary~\ref{ScatProd.45},
  there is an iterated blow-down map
\begin{equation}
f:[X^n;\mathcal{B}_{\bo}]\longrightarrow [X^m;\mathcal{B}_{\bo}\times X^{n-m}].
\label{ScatProd.50}\end{equation}
Composing this with projection off the last $n-m$ factors gives a map
$\pi_{\bo}$ for which the diagramme \eqref{9.1.2008.1} commutes. Since
both the iterated blow-down map and the projection are b-submersions, so is
$\pi_{\bo}.$ To see that it is a b-fibration it suffices to show that each
boundary hypersurface of $X^n_{\bo}$ is mapped into either a boundary
hypersurface of $X^m_{\bo}$ or onto the whole manifold; this is
`b-normality'. As a b-map $\pi_{\bo}$ maps each boundary face into a
boundary face so it is enough to see what happens near the interior of each
boundary hypersurface of $X^n_{\bo}.$ If the boundary hypersurface in
question is not the result of some blow up then $\pi_{\bo}$ looks locally
the same as $\pi$ and local b-normality follows. If it is the result of
blow up then $\pi_{\bo}$ maps into the interior provided the boundary face
is not the lift of a boundary face, necessarily of codimension two or
greater, from $X^m.$ If it is such a lift then $\pi_{\bo}$ is locally the
projection onto $X^m_{\bo},$ i\@.e\@. maps into the interior of the
corresponding front face.
\end{proof}

We shall analyze more fully the structure of the boundary faces of
$[X^n;\mathcal{C}]$ where $\mathcal{C}\subset\mathcal{B}_{\bo}$ is some
collection closed under non-transversal intersection. Unless otherwise
stated below, although mostly for notational reasons, we will make the
simplifying restriction that
\begin{equation}
\text{The boundary of }X\text{ is connected so
}\mathcal{B}_{\bo}=\cB_{(2)}=\mathcal{M}_{(2)}(X^n).
\label{ScatProd.51}\end{equation}

For a boundary face $B\in\cB_{(2)}$ it is convenient to consider three
distinct possibilities
\begin{enumerate}
\item[(i)] $B\in\mathcal{C}$
\item[(ii)] $B\notin\mathcal{C}$ but there exists $A\in\mathcal{C},$ $B\subset A.$
\item[(iii)] $B\notin\mathcal{C}$ and
$A\supset B\Longrightarrow A\notin\mathcal{C}.$
\end{enumerate}

In the first case  
\begin{equation}
\mathcal{C}=\{B\}\cup\Sm(B)\cup\Bi(B)\cup\Nc(B) 
\label{ScatProd.12}\end{equation}
is a disjoint union, where 
\begin{equation}
\begin{gathered}
\Sm(B)=\{B'\in\mathcal{C};B'\subsetneqq B\}\\
\Bi(B)=\{B'\in\mathcal{C};B'\supsetneqq B\}\\
\Nc(B)=\{B'\in\mathcal{C};B\Mand B'\text{ are not comparable}\}.
\end{gathered}
\label{ScatProd.52}\end{equation}

\begin{proposition}\label{ScatProd.11} If $\mathcal{C}\subset\cB_{\bo}$ is
  closed under non-transversal intersection and $B\in\mathcal{C}$ then
  under any factor exchange map of $X^n$ which corresponds to a permutation of
  $\{1,\dots,n\}$ transforming $B$ to $(\pa X)^{c}\times X^{n-c},$
  $c=\codim(B),$ the lift of $B\in\mathcal{C}$ to $[X^n;\mathcal{C}]$ is
  diffeomorphic to
\begin{equation}
(\pa X)^{c}\times
[\bbS^{c-1,c-1};\mathcal{C}_{\Bi}]\times
[X^{n-c};\mathcal{C}_{\Sm}]
\label{ScatProd.13}\end{equation}
where $\mathcal{C}_{\Bi}$ is the collection of boundary faces
of the totally positive part $\bbS^{c-1,c-1}$ of the
$(c-1)$-sphere corresponding to the elements of
$\Bi(B)$ in \eqref{ScatProd.12} and
$\mathcal{C}_{\Sm}$ is the collection of boundary faces
$B'\subset X^{n-c}$ arising from the elements of $\Sm(B)$ in
\eqref{ScatProd.12}.
\end{proposition}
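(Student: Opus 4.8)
The plan is to reduce to the normal form by the factor exchange and then to exploit the freedom granted by Proposition~\ref{ScatProd.1} to perform the blow ups of $\mathcal{C}$ in a \emph{size-order}, with codimension weakly decreasing. So first I would fix the permutation putting $B=(\pa X)^c\times X^{n-c}$ and record the effect of blowing up $B$ by itself. Since $B$ is a product and the $c$ boundary hypersurfaces $\{x_i=0\}$, $i\le c$, through it have global defining functions $x_1,\dots,x_c$, the description in \S\ref{Bbu} shows that the front face of $[X^n;B]$ is the \emph{trivial} inward normal sphere bundle, $\ff([X^n;B])=\bbS^{c-1,c-1}\times(\pa X)^c\times X^{n-c}$, the sphere factor being the totally positive part of the $(c-1)$-sphere in the conormal directions $dx_1,\dots,dx_c$. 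The lift of $B$ to $[X^n;\mathcal{C}]$ is this front face as further modified by the remaining blow ups, so everything comes down to showing that those blow ups refine only the two outer factors, in the patterns recorded by $\mathcal{C}_{\Bi}$ and $\mathcal{C}_{\Sm}$.

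Labelling each $B'\in\mathcal{C}$ by its set $S'$ of boundary factors (so that $B'\subseteq B''\iff S'\supseteq S''$ and $\codim B'=|S'|$), the decomposition \eqref{ScatProd.12}--\eqref{ScatProd.52} becomes completely explicit: the members of $\Bi(B)$ have $S'\subsetneq\{1,\dots,c\}$ and involve only the first $c$ defining functions, while the members of $\Sm(B)$ have $S'\supsetneq\{1,\dots,c\}$ and are exactly the ``vertical'' faces $(\pa X)^c\times B''$ with $B''$ a boundary face of $X^{n-c}$. I would then check that the two families act on complementary groups of coordinates. A member of $\Bi(B)$ lifts, once $B$ has been blown up, to the sphere face $\{s_i=0,\ i\in S'\}$ of $\bbS^{c-1,c-1}$, and these faces constitute $\mathcal{C}_{\Bi}$; a member of $\Sm(B)$ meets $B$ precisely in $(\pa X)^c\times B''$ and so refines only the $X^{n-c}$ factor, the faces $B''$ constituting $\mathcal{C}_{\Sm}$. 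Both derived collections inherit closure under non-transversal intersection from $\mathcal{C}$, so $[\bbS^{c-1,c-1};\mathcal{C}_{\Bi}]$ and $[X^{n-c};\mathcal{C}_{\Sm}]$ are well defined by Proposition~\ref{ScatProd.1}; and because the sphere coordinates $s_1,\dots,s_c$ and the $X^{n-c}$ coordinates are disjoint, the two families of blow ups commute with each other and with that of $B$, so the product structure is preserved throughout.

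The substantive point --- and the step I expect to be the main obstacle --- is to show that no element of $\Nc(B)$ disturbs the front face of $B$. For $A\in\Nc(B)$ the intersection $A\cap B$ is a proper subface of $B$ of codimension strictly greater than $c$. When $A$ and $B$ are \ncnt\ this intersection lies in $\mathcal{C}$ by the closure hypothesis and is then a member of $\Sm(B)$; a size-order blows it up before $B$, and by Corollary~\ref{24.5.2008.143} (condition \eqref{24.5.2008.147} with centre $A\cap B$) the lifts of $A$ and $B$ thereupon become disjoint and, by Lemma~\ref{ScatProd.34}(i), remain so under all subsequent blow ups. When $A$ and $B$ are transversal they lift to remain transversal, and $A$ meets the front face of $B$ only in the vertical face of $X^{n-c}$ it determines --- which, in the principal case that $\mathcal{C}$ is closed under \emph{all} intersections (e.g. $\mathcal{C}=\mathcal{B}_{\bo}$), is again the face $A\cap B\in\Sm(B)$ already recorded in $\mathcal{C}_{\Sm}$. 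Verifying this simultaneously over all of $\Nc(B)$, while checking that the intermediate collections stay closed under non-transversal intersection so that the reorderings are legitimate, is the heart of the argument and is precisely the bookkeeping governed by Lemma~\ref{ScatProd.34} and Proposition~\ref{ScatProd.39}.

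Finally I would assemble the result by blowing up in the chosen size-order. First all faces of codimension $>c$: these refine the last factor to $[X^{n-c};\mathcal{C}_{\Sm}]$ and render every element of $\Nc(B)$ disjoint from the lift of $B$, which remains a codimension-$c$ boundary face trivialized by $x_1,\dots,x_c$. Blowing up $B$ next therefore produces the trivial bundle $\bbS^{c-1,c-1}\times(\pa X)^c\times[X^{n-c};\mathcal{C}_{\Sm}]$, untouched by the now-disjoint $\Nc(B)$ faces. Blowing up the remaining faces of codimension $<c$, which contain $\Bi(B)$, then refines only the sphere factor to $[\bbS^{c-1,c-1};\mathcal{C}_{\Bi}]$. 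Lemma~\ref{ScatProd.61} ensures each centre is a p-submanifold when reached, and Proposition~\ref{ScatProd.1} ensures the resulting manifold is independent of the intersection-order, legitimising this convenient ordering; reading off the front face then yields the asserted product \eqref{ScatProd.13}.
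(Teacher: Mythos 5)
Your proposal is correct and is essentially the paper's own argument: both rest on the decomposition \eqref{ScatProd.12}, invoke Proposition~\ref{ScatProd.1} to pass to a convenient intersection-order, and then observe that the blow-ups coming from $\Sm(B)$ refine only the $X^{n-c}$ factor of the front face of $B,$ those from $\Bi(B)$ refine only the $\bbS^{c-1,c-1}$ factor, and the elements of $\Nc(B)$ are rendered harmless because once $A\cap B$ is blown up the lifts of $A$ and $B$ become disjoint and stay so. The only organisational difference is the choice of order: the paper blows up $B$ \emph{first} (the order $\{B\},$ $\Sm(B),$ $\Bi(B),$ $\Nc(B),$ each piece size-ordered, is an intersection-order), whereas you use a global size-order so that $B$ is blown up after all faces of larger codimension; by Proposition~\ref{ScatProd.1} this difference is immaterial.

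Your hedge in the third paragraph, however, deserves comment, because it is not excess caution: it marks a genuine lacuna in the paper's own proof. The paper asserts that \emph{every} $B'\in\Nc(B)$ satisfies $B'\cap B\in\Sm(B),$ but the hypothesis (closure under non-transversal intersection) guarantees this only for \ncnt\ pairs. If $B'\pitchfork B$ and $B'\cap B\notin\mathcal{C},$ then $B'$ is never separated from $B,$ and blowing up $B'$ blows up its trace in the lift of $B,$ producing a boundary face of the $X^{n-c}$ factor that is not recorded in $\mathcal{C}_{\Sm}.$ Concretely, for $n=4$ and $\mathcal{C}=\{\pa X\times\pa X\times X\times X,\ X\times X\times\pa X\times\pa X\}$ (vacuously closed under non-transversal intersection), the lift of the first face to $[X^4;\mathcal{C}]$ is $(\pa X)^2\times\bbS^{1,1}\times[X^2;(\pa X)^2],$ not $(\pa X)^2\times\bbS^{1,1}\times X^2$ as \eqref{ScatProd.13} would give, and these are not diffeomorphic as manifolds with corners. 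So \eqref{ScatProd.13} as stated requires either that $\mathcal{C}$ be closed under \emph{all} intersections \mhy\ true in the principal case $\mathcal{C}=\cB_{(2)}$ with $\pa X$ connected, where any intersection of boundary faces again lies in $\mathcal{C}$ \mhy\ or that $\mathcal{C}_{\Sm}$ be enlarged to include the faces $B'\cap B$ for $B'\in\Nc(B)$ transversal to $B.$ Your restriction to the principal case is exactly the right fix, and within it your proof is complete.
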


\begin{proof} The ordering of $\mathcal{C}$ arising from
  \eqref{ScatProd.12}, in which the pieces are size-ordered, is an
  intersection-order. Since $\mathcal{C}$ is closed under non-transversal
  intersection, each element $B'\in\Nc(B)$ corresponds to an element
  $B'\cap B\in\Sm(B).$ Once this is blown up the
  lifts of $B$ and $B'$ are disjoint, which accounts for the
  absence of terms from $\Nc(B)$ in \eqref{ScatProd.13}. Moreover
  this argument shows that the result as far as $B$ is concerned is
  the same if $\mathcal{C}$ is replaced by the union of the first three
  terms in \eqref{ScatProd.12}. Relabelling the factors so that $B$ has
  the boundary of $X$ in the first $c$ factors, the result of blowing up
  $B$ in $X^n$ is to replace a neighbourhood of it by 
\begin{equation}
(\pa X)^{c}\times \bbS^{c-1,c-1}\times X^{n-c}\times[0,1)
\label{ScatProd.14}\end{equation}
where the last factor is a defining function for the front face. Moreover,
the boundary faces in $\mathcal{C}$ (excluding those not comparable to
$B)$ lift either to the products of boundary faces of
$\bbS^{c-1,c-1}$ with the other factors except the last,
or else products of all the other factors with boundary faces of
$X^{n-c}.$ The effect of the subsequent blow-ups on the lift of
$B$ is therefore as indicated in \eqref{ScatProd.13}. 
\end{proof}

So, next suppose instead that $B\notin\mathcal{C}.$ The
decomposition \eqref{ScatProd.12} still exists, of course without $B$
itself. Case (ii) above corresponds to $\Bi(B)$ being non-empty. Since
$\Bi(B)\subset\mathcal{C}$ consists of those elements which contain $B$ it
is also closed under non-transversal intersection. For a fixed element
$A'\in\Bi(B)$ no two elements contained in $A'$ can be transversal, so this
subcollection is closed under intersection and hence has a minimal
element. Since $\Bi(B)$ is closed under non-transversal intersection, the
collection of minimal elements must be transversal in pairs. Denote this
collection 
\begin{equation}
\begin{gathered}
\mathfrak{b}(B)=\{A\in\mathcal{C};B\subset A,\
B\subset A'\subset A,\ A'\in\mathcal{C}\Longrightarrow A'=A\}\Mthen\\
\Bi(B)=\bigcup_{A\in\mathfrak{b}(B)}\Bi_{A}(B)\text{ is a disjoint
  union, where }\Bi_A(B)=\{A'\in\mathcal{C}; A\subset A'\}.
\end{gathered}
\label{ScatProd.79}\end{equation}

\begin{proposition}\label{ScatProd.15} If $B\notin\mathcal{C},$ where
  $\mathcal{C}\subset\cB_{(2)}$ is closed under
  non-transversal intersection, but $\Bi(B)\not=\emptyset$ then
  $\mathfrak{b}(B)\subset\mathcal{C}$ defined by \eqref{ScatProd.79} is a
  non-empty collection of transversally intersecting boundary faces and $B$ lifts
  to be a common boundary face (i\@.e\@. in the intersection of) the lifts of
  the elements of $\mathfrak{b}(B),$ in \eqref{ScatProd.79}, and is
  diffeomorphic to
\begin{equation}
[B;\Sm(B)]\times
  \prod_{A_i\in\mathfrak{b}(B)}
[\bbS^{d(i)-1,d(i)-1};\Bi(A_i)],\ d(i)=\codim(A_i), 
\label{ScatProd.17}\end{equation}
where $\Sm(B)$ is interpreted as a collection of boundary faces of $B$ 
and where for each $A_i\in\mathfrak{b}(B),$ $\Bi(A_i)$ is the collection of
boundary faces of $\bbS^{d(i)-1,d(i)-1}$ arising from the lifts of the
elements of $\Bi(B)$ strictly containing $A_i.$ 
\end{proposition}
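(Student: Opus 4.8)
The plan is to mirror the proof of Proposition~\ref{ScatProd.11}, now in case (ii): I would decompose $\cC$ into the sub-collections $\Sm(B),\Bi(B),\Nc(B)$ of \eqref{ScatProd.52} and exploit the freedom, guaranteed by Proposition~\ref{ScatProd.1}, to blow up $\cC$ in whatever intersection-order is convenient, then read off the lift of $B$ in a local product model. The three pieces should contribute, respectively, the $[B;\Sm(B)]$ factor, the sphere factors, and (this is the delicate point) nothing new beyond the boundary faces of $B$.

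First I would pin down the combinatorics of $\mathfrak{b}(B)$. Since $\Bi(B)$ is finite and non-empty it has minimal elements, which are exactly $\mathfrak{b}(B)$. If $A_i,A_j\in\mathfrak{b}(B)$ were \ncnt\ then $A_i\cap A_j\in\cC$ by closure under non-transversal intersection, and this intersection would contain $B$ and be properly contained in each of $A_i,A_j$, contradicting minimality; hence $\mathfrak{b}(B)$ is pairwise transversal, i.e.\ the defining-function supports $S_{A_i}$ are pairwise disjoint. For a fixed $A'\in\Bi(B)$ the set $\{D\in\cC; B\subset D\subset A'\}$ consists of faces all sharing the defining functions of $A'$, so it is pairwise non-transversal, closed under intersection, and has a unique minimal element lying in $\mathfrak{b}(B)$; thus each $A'$ contains exactly one $A_i$, yielding the disjoint partition $\Bi(B)=\bigsqcup_i\Bi_{A_i}(B)$ of \eqref{ScatProd.79} with every $S_{A'}$ contained in a single block $S_{A_i}$.

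Next comes the local model. After a factor permutation write $B=(\pa X)^c\times X^{n-c}$ and use a collar to model a neighbourhood as $(\pa X)^c\times[0,\infty)^c\times X^{n-c}$ with $B=\{x_1=\dots=x_c=0\}$. Each $A\in\Bi(B)$ is cut out by a proper subset of $\{x_1,\dots,x_c\}$ lying inside the support of the unique $A_i$ above it; since the $S_{A_i}$ are disjoint, the $\Bi(B)$ blow-ups factor through the blocks $[0,\infty)^{S_{A_i}}$, and in each block blowing up the deepest corner $A_i$ first and then the larger faces (a size-order) produces the factor $[\bbS^{d(i)-1,d(i)-1};\Bi(A_i)]$, exactly as in the passage from $B$ to \eqref{ScatProd.13}; indices outside $\bigcup_iS_{A_i}$ are never blown up and merely place the lift of $B$ at a deeper corner. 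The elements of $\Sm(B)$ involve only the $X^{n-c}$ factor (the factor $(\pa X)^c$ is closed), giving $[B;\Sm(B)]=(\pa X)^c\times[X^{n-c};\cdot]$. Because each $A_i\supset B$, the lift of $B$ sits inside the front face of every $A_i$, which is the "common boundary face" assertion, and the dimension count $\dim B+\sum_i(d(i)-1)$ confirms the product \eqref{ScatProd.17}.

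The handling of $\Nc(B)$ is where I expect the real work, and the point at which the argument genuinely diverges from Proposition~\ref{ScatProd.11}. There, with $B\in\cC$, any $B'\in\Nc(B)$ has $B\cap B'\in\Sm(B)\subset\cC$ by closure, so blowing up $\Sm(B)$ separates $B'$ from $B$; here, with $B\notin\cC$, the face $B\cap B'$ need not lie in $\cC$, and indeed one checks in coordinates that blowing up such a $B'$ can induce a non-trivial blow-up of the boundary face $B\cap B'$ of $B$. The plan is to use Lemma~\ref{ScatProd.34}, Proposition~\ref{ScatProd.39} and Lemma~\ref{24.5.2008.150} (that $\tilde B_1\cap\tilde B_2$ always lies in the lift of $B_1\cap B_2$) to show that the blow-up of each $B'\in\Nc(B)$ induces on the lift of $B$ \emph{precisely} the blow-up of the single boundary face $B\cap B'$, and that after reordering these commute into and are absorbed by the resolution of the boundary faces of $B$. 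This is what forces $\Sm(B)$ in \eqref{ScatProd.17} to be read as the full trace of $\cC$ on $B$ rather than literally $\{B'\in\cC; B'\subsetneq B\}$, and controlling these traces uniformly is the main obstacle.
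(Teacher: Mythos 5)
Your first three paragraphs are, modulo wording, the paper's own proof: the same choice of intersection-order (the elements of $\mathfrak{b}(B)$ first, then the rest of $\Bi(B)$ in size-order, then $\Sm(B),$ then $\Nc(B)$), the same identification of the lift of $B$ after the $\mathfrak{b}(B)$ blow-ups with $B\times\prod_i\bbS^{d(i)-1,d(i)-1},$ and the same bookkeeping for the remaining elements of $\Bi(B)$ and for $\Sm(B).$ The divergence is entirely in your last paragraph, because the paper disposes of $\Nc(B)$ in a single sentence: the subsequent blow-ups of faces not comparable to $B$ ``do not affect its lift, since their intersections with $B$ have already been blown up.'' That sentence is exactly the assertion you decline to make, and you are right to decline: under the stated hypotheses it is false, and so is the literal formula \eqref{ScatProd.17}. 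Take $n=5,$ $B=(\pa X)^3\times X^2$ (boundary in factors $1,2,3$) and $\mathcal{C}=\{A,B'\}$ with $A=(\pa X)^2\times X^3$ and $B'=X^2\times(\pa X)^3.$ Since $A\pitchfork B',$ $\mathcal{C}$ is vacuously closed under non-transversal intersection; here $\Bi(B)=\mathfrak{b}(B)=\{A\},$ $\Sm(B)=\emptyset,$ $\Nc(B)=\{B'\},$ and $B\cap B'=(\pa X)^5\notin\mathcal{C},$ so it is never blown up. Blowing up $A$ lifts $B$ to $B\times\bbS^{1,1};$ the lift of $B'$ meets this lift in $(B\cap B')\times\bbS^{1,1},$ a codimension-two corner, so blowing up $B'$ converts the lift of $B$ into $(\pa X)^3\times[X^2;(\pa X)^2]\times\bbS^{1,1},$ which has five boundary hypersurfaces, whereas \eqref{ScatProd.17} predicts $(\pa X)^3\times X^2\times\bbS^{1,1},$ which has four. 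So the Proposition as printed fails, and your proposed repair \mhy\ replacing $\Sm(B)$ by the trace collection $\{G\cap B;\ G\in\mathcal{C},\ G\cap B\in\cM_{(2)}(B)\},$ which is exactly the form the paper itself uses in Proposition~\ref{ScatProd.16} \mhy\ gives the right answer, $[B;(\pa X)^5]\times\bbS^{1,1}$ in this example.

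Two caveats, however. First, your contrast with case (i) is itself inaccurate: closure is only under \emph{non-transversal} intersection, so even when $B\in\mathcal{C}$ an element $B'\in\Nc(B)$ with $B'\pitchfork B$ has $B\cap B'\notin\mathcal{C}$ in general, and its blow-up distorts the lift of $B$ in just the same way (take $\mathcal{C}=\{(\pa X)^2\times X^2,\ X^2\times(\pa X)^2\}$ in $X^4$); thus \eqref{ScatProd.13} needs the same trace correction, and the separation mechanism you cite there is valid only for the \ncnt\ elements of $\Nc(B).$ Second, even for the corrected statement your final paragraph is a plan rather than a proof: it remains to show that each $\Nc(B)$ blow-up meets the current lift of $B$ cleanly in the lift of $B\cap B'$ unless an earlier centre has already separated them (when, as in the case $B\cap B'\in\Sm(B),$ it contributes nothing new), that the induced operation on the lift of $B$ is precisely the blow-up of that trace, and that the order in which these traces arrive is an intersection-order for the trace collection on $B$ (which one checks is closed under non-transversal intersection), so that the result is a well-defined $[B;\cdot].$ That is the real content of the Proposition; it is also precisely the part for which the paper offers no argument, so your ``obstacle'' is a gap in the source, not merely in your attempt.
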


\begin{proof} Give $\mathcal{C}$ an intersection-order in which the
  elements of $\mathfrak{b}(B)$ come first, followed by the other elements
  of $\Bi(B)$ in a size-order, followed by the elements of $\Sm(B)$ in
  size-order, followed by the elements of $\Nc(B),$ also in
  size-order. Since the elements of $\mathfrak{b}(B)$ are transversal, they
  can be in any order. Since $B$ lifts into the front face under the first
  blow-up and remains a boundary face of the lifts of the others it lifts
  to be in the intersection of the lifts of the elements of
  $\mathfrak{b}(B)$ and after they are blown up is of the form 
\begin{equation}
B\times \prod_{A_i\in\mathfrak{b}(B)}
\bbS^{d(i)-1,d(i)-1},\ d(i)=\codim(A_i).
\label{ScatProd.80}\end{equation}
The other elements of $\Bi(B)$ contain one of the $A_i$ and are transversal
to the others and it follows that they lift to be boundary faces of the
corresponding fractional sphere, as indicated. The boundary faces in
$\Sm(B)$ lift in the obvious way and \eqref{ScatProd.17} results from the
fact that the subsequent blow ups of boundary faces not comparable to $B$
do not affect its lift, since their intersections with $B$ have already
been blown up.
\end{proof}

The third case is then $B\notin\mathcal{C}$ and such that there is no
element of $\mathcal{C}$ containing it.

\begin{proposition}\label{ScatProd.16} If $B\subset A$ implies that
  $A\notin\mathcal{C}$ then $B$ lifts to a boundary face of
  $[X^n;\mathcal{C}]$ of the same dimension which is diffeomorphic to 
\begin{equation}
[B;\{F\in\mathcal{M}_{(2)}(B);F=G\cap B,\ G\in\mathcal{C}\}].
\label{24.5.2008.117}\end{equation}
\end{proposition}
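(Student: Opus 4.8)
The plan is to show that the entire sequence of blow-ups defining $[X^n;\mathcal{C}]$ restricts, on the proper transform of $B,$ to the iterated blow-up of $B$ along the traces of the elements of $\mathcal{C}.$ Since $B\notin\mathcal{C}$ and $\Bi(B)=\emptyset,$ the decomposition \eqref{ScatProd.12} reduces to $\mathcal{C}=\Sm(B)\cup\Nc(B).$ Write $\mathcal{C}_B=\{F\in\mathcal{M}_{(2)}(B);F=G\cap B,\ G\in\mathcal{C}\}$ for the collection appearing in \eqref{24.5.2008.117}. First I would check that $\mathcal{C}_B$ is closed under non-transversal intersection in $B.$ In adapted coordinates, writing $B=\{x_j=0:j\in I_B\}$ and $G=\{x_j=0:j\in I_G\},$ the trace $G\cap B$ is the face of $B$ with index set $I_G\setminus I_B,$ and two such traces are transversal in $B$ exactly when these index sets are disjoint. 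If the traces of $G_1,G_2\in\mathcal{C}$ are non-transversal in $B$ then $I_{G_1}\cap I_{G_2}\neq\emptyset,$ so $G_1$ and $G_2$ are non-transversal in $X^n$ and $G_1\cap G_2\in\mathcal{C};$ its trace on $B$ has index $(I_{G_1}\cup I_{G_2})\setminus I_B,$ which is exactly the intersection of the two traces. Hence $\mathcal{C}_B$ is closed under non-transversal intersection and $[B;\mathcal{C}_B]$ is well-defined by Proposition~\ref{ScatProd.1} and Definition~\ref{ScatProd.6}.

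The key local computation is that a single blow-up of a boundary face $G$ with $G\not\supset B$ induces, on the proper transform of $B,$ precisely the blow-up of the trace $G\cap B$ (doing nothing when this trace is empty or of codimension one in $B$). Introducing $t_G=\sum_{i\in I_G}x_i$ and the quotients $t_j=x_j/t_G$ as in the proof of Lemma~\ref{ScatProd.34}, the proper transform of $B$ is $\{t_j=0:j\in I_B\},$ on which $t_G$ restricts to $\sum_{i\in I_G\setminus I_B}x_i$ and the surviving $t_j,$ $j\in I_G\setminus I_B,$ satisfy $\sum t_j=1;$ comparing with the radial and spherical variables of $[B;G\cap B]$ identifies the two. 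Since no element of $\mathcal{C}$ contains $B,$ Lemma~\ref{ScatProd.61} guarantees that at every stage the lift of $B$ is its proper transform, of unchanged dimension, and remains a boundary face; this already establishes that $B$ lifts to a boundary face of $[X^n;\mathcal{C}]$ of the same dimension.

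To assemble these single steps I would blow up $\mathcal{C}$ in a size-order, which computes $[X^n;\mathcal{C}]$ by Proposition~\ref{ScatProd.1}, and track the induced operations on $B.$ Distinct traces in $\mathcal{C}_B$ are then blown up in the order of their first appearance, and this induced order is an intersection-order: if traces $F_1,F_2$ are non-transversal in $B$ with earliest representatives $G_1,G_2,$ then $G_1$ and $G_2$ are non-comparable and non-transversal in $X^n,$ so $G_1\cap G_2\in\mathcal{C}$ has strictly larger codimension than both, is therefore blown up strictly earlier, and has trace $F_1\cap F_2.$ Several elements of $\mathcal{C}$ may share a trace; only the first to be blown up affects $B,$ while the later ones meet the proper transform of $B$ in the codimension-one front face already created and so contribute nothing further. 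The induction then identifies the lift of $B$ with $[B;\mathcal{C}_B],$ and independence of the size-order follows from Proposition~\ref{ScatProd.1} applied inside $B.$

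The main obstacle is the bookkeeping of lifts: at the stage where $G\in\mathcal{C}$ is blown up I must know that the induced centre on the current proper transform of $B$ is exactly the lift of $G\cap B,$ neither too large nor already collapsed. The delicate point is that the lifts of $G$ and $B$ may be driven apart by an earlier blow-up with centre $B''$ satisfying \eqref{24.5.2008.147}. Here closure of $\mathcal{C}$ is essential: one checks that $G\cap B''\in\mathcal{C}$ has the same trace on $B$ as $G$ and is blown up no later than $B''$ separates $G$ from $B,$ so the trace $G\cap B$ is realized before it can be lost. Controlling these inclusions of traces is precisely an application of Lemma~\ref{24.5.2008.150}, and carrying it through uniformly over the size-order is the technical core of the argument.
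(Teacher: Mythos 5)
Your proposal is correct, and its skeleton is the same as the paper's: use the order-independence of Proposition~\ref{ScatProd.1} to fix a convenient intersection-order on $\mathcal{C},$ then track the blow-ups induced on the lift of $B.$ Where you differ is the choice of order, and the comparison is instructive. The paper orders $\mathcal{C}$ by comparability with $B$: first $\Sm(B)$ (size-ordered), then $\Nc(B)$ (size-ordered). This buys transparency in the first phase, since an element $G\subsetneq B$ is its own trace and can never be separated from $B$ \mhy\ the disjunction criterion \eqref{24.5.2008.147} with $B_1=G,$ $B_2=B$ would require $G=G\cap B\subset B''$ and $G\setminus B''\not=\emptyset$ simultaneously \mhy\ so each of these blow-ups visibly blows up a face of $B.$ You instead run one uniform size-order and track first appearances of traces, which forces you to confront the separation phenomenon everywhere; but your mechanism for handling it is the right one, and it closes up into a clean descent argument that you stop just short of stating: if $G_F$ is the \emph{earliest} element of $\mathcal{C}$ with trace $F,$ then a separating centre $B''$ (blown up before $G_F$) would give $G_F\cap B''\in\mathcal{C}$ with the same trace $F$ and, since separation requires $G_F\not\subset B'',$ with strictly larger codimension than $G_F,$ hence occurring strictly earlier \mhy\ contradicting minimality; so the first representative of each trace survives to induce its blow-up, and your remaining observations (later representatives meet the lift of $B$ in the codimension-one front face or not at all) finish the identification with \eqref{24.5.2008.117}. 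It is worth noting that the paper's choice of order does not actually dispose of this issue: it reappears verbatim in the $\Nc(B)$ phase, where the paper's ``the effect on $B$ is then as indicated!'' conceals exactly the bookkeeping you single out as the technical core. So your route is marginally less economical but makes explicit what the paper leaves implicit.
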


\begin{proof} Give $\mathcal{C}$ the intersection order in which all the
  $B'\subset B$ come first (size-ordered) and then all the faces
  which are not comparable to $B$ (size-ordered as well). The effect
  on $B$ is then as indicated!
\end{proof}

\section{Multi-diagonals}\label{Multidiags}

The main utility of the manifold $X^n_{\bo}$ as constructed above is that
it resolves the intersection with the boundary of each of the
multi-diagonals in $X^n.$ The total diagonal in $X^n$ is the submanifold,
diffeomorphic to $X,$ which is the image of the map $X\ni p\longmapsto
(p,p,p,\dots,p)\in X^n,$
\begin{equation}
\Diag(X^n)=\{m\in X^n;m=(p,p,\dots, p)\text{ for some }p\in X\}.
\label{ScatProd.76}\end{equation}
The partial diagonals in $X^n$ are the inverse images of the total
diagonal in $X^k,$ for some $k\le n,$ pulled back to $X^n$ by one of the
projections $X^n\longrightarrow X^k.$ Thus a partial diagonal involves
equality in at least two factors. Since we are assuming that the boundary
of $X$ is connected, there is a 1-1 correspondence between partial
diagonals, projections onto $X^k$ for $k\ge2,$ and elements of
$\cB_{\bo}.$ Thus if $B\in\cB_{\bo}$ it will be
convenient to denote the correspond partial diagonal as $D_B$ and the
corresponding projection as $\pi_B,$ where the partial diagonal is given by
equality in exactly those factors in which $B$ has a boundary component.

Diagonals are however more complicated than boundary faces, at least when
$n\ge4.$ Namely the intersection of two partial diagonals is not
necessarily a partial diagonal. Indeed
\begin{equation}
D_B\cap D_{B'}=D_{B\cap B'}\text{ if and only if }B\cap B'\text{ is
  non-transversal}. 
\label{ScatProd.72}\end{equation}
Non-transversality of intersection of $B$ and $B'$ is the condition that
$\pa X$ appears in at least one factor in common. Then equality of points
in all the factors in which the boundary occurs in $B$ and also in $B'$
separately, implies that they are equal in all factors in $B\cap B'$ giving
equality in \eqref{ScatProd.72} in the non-transversal case. Conversely, if
the intersection is transversal then separate equality does not imply
overall equality. Thus

\begin{lemma}\label{ScatProd.73} The submanifolds of $X^n$ arising
as the intersections of collections of partial diagonals in $X^n$ form the
collection of \emph{multi-diagonals} which are in 1-1 correspondence with
the (non-empty) transversal collections of boundary faces of codimension at
least two
\begin{equation}
\mathfrak{b}\subset\cB_{(2)}\Mst B_1,B_2\in\mathfrak{b}\Longrightarrow
B_1\pitchfork B_2.
\label{ScatProd.74}\end{equation}
\end{lemma}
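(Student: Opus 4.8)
The plan is to convert the whole statement into the combinatorics of partitions of the index set $\{1,\dots,n\}$. First I would encode each boundary face $B\in\cB_{(2)}$ by the set $F(B)\subseteq\{1,\dots,n\}$ of factors in which $\pa X$ appears; since $\codim B\ge2$ this is a subset with $|F(B)|\ge2,$ and the partial diagonal $D_B$ is precisely the locus where the coordinates indexed by $F(B)$ all coincide. In this language two faces satisfy $B\pitchfork B'$ exactly when $F(B)\cap F(B')=\emptyset,$ while a non-transversal intersection has $F(B\cap B')=F(B)\cup F(B')$ (using connectedness of $\pa X,$ so that $\pa X\cap\pa X=\pa X$ factor by factor). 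Re-deriving \eqref{ScatProd.72} is then immediate: imposing equality separately on two overlapping index sets forces equality on their union, whereas for disjoint index sets it does not.

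Next, for an arbitrary collection $S\subseteq\cB_{(2)}$ I would identify $\bigcap_{B\in S}D_B$ with the set of points whose coordinates are constant on each block of the partition $P$ of $\{1,\dots,n\}$ generated by the relation $i\sim j$ whenever $i,j$ lie in a common $F(B).$ The blocks of $P$ of size at least two are pairwise disjoint, so assigning to each such block the boundary face whose factor-set equals that block yields a collection $\fb$ with pairwise disjoint factor-sets, hence transversal in the sense of \eqref{ScatProd.74}, and satisfying $\bigcap_{B\in S}D_B=\bigcap_{A\in\fb}D_A.$ The engine for this reduction from $S$ to $\fb$ is \eqref{ScatProd.72}: whenever $S$ contains a non-transversal pair $\{B,B'\}$ I replace it by the single face $B\cap B'$ without changing the intersection of the diagonals, and since $F(B\cap B')$ strictly enlarges the participating index sets the process terminates, necessarily at the collection of maximal blocks of $P.$

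Conversely, any transversal $\fb$ satisfying \eqref{ScatProd.74} has pairwise disjoint factor-sets, which are exactly the size-$\ge2$ blocks of a partition, and its associated multi-diagonal is $\bigcap_{A\in\fb}D_A.$ I would then close the argument by checking that the two assignments are mutually inverse: the partition $P$ is recovered from a given multi-diagonal as the coarsest partition on whose blocks all coordinates are forced to agree, and $\fb$ is recovered from $P$ as its blocks of size at least two, so distinct transversal collections produce distinct partitions and hence distinct multi-diagonals.

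The main obstacle is the middle step, namely verifying that the intersection of partial diagonals is governed precisely by the generated equivalence relation and that the canonical transversal collection of maximal blocks reproduces the \emph{same} submanifold. This is exactly where \eqref{ScatProd.72} must be applied iteratively, and where one must confirm that passing from $S$ to its transversal representative $\fb$ loses none of the equality constraints; the potential subtlety is the well-definedness of the correspondence, since many different collections $S$ yield the same multi-diagonal, and one must see that they all collapse to the unique $\fb$ determined by $P.$
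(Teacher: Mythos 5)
Your proof is correct and takes essentially the same route as the paper: the paper deduces the lemma directly from \eqref{ScatProd.72} (overlapping index sets merge equalities, disjoint ones impose independent equalities), and your partition-of-$\{1,\dots,n\}$ bookkeeping is precisely the chain/connected-component construction that the paper itself spells out immediately afterwards to define the transversal union $\fb_1\Cup\fb_2$. Your extra verification that the correspondence is well defined and injective is a minor (and sound) elaboration of what the paper leaves implicit.
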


The multi-diagonal corresponding to the transversal family $\fb$ will
be denoted by $D_{\fb}$, so
\begin{equation}
D_{\mathfrak{b}}=\{m\in
X^n;\pi_{B}(m)\in\Diag(X^k),\ k=\codim(B)\ \forall\ B\in\mathfrak{b}\}.
\label{ScatProd.75}\end{equation}

Let us now clarify the sense in which $X^n_{\bo}$ resolves the
multi-diagonals. Consider first the total diagonal.

\begin{lemma}\label{ScatProd.53} The total diagonal in $X^n$ is naturally
  diffeomorphic to $X$ and is a b-submanifold but never a
  p-submanifold. Under blow-up of the boundary face of maximal codimension,
  $(\pa X)^n,$ the total diagonal lifts to (i\@.e\@. the closure of its
  interior is) a p-submanifold.
\end{lemma}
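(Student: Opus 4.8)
\textbf{Reduction to the deepest corner.} The plan is to make everything local at the maximal corner, after recording the one geometric fact that permits this. Writing $\Diag(X^n)$ as the image of $p\mapsto(p,\dots,p)$ shows at once that it is a closed embedded submanifold naturally diffeomorphic to $X$. The point to exploit is that a diagonal point $(p,\dots,p)$ lies on $\pa(X^n)$ if and only if $p\in\pa X$, and then \emph{every} factor lies on $\pa X$; hence $\Diag(X^n)$ meets the boundary only inside the single maximal corner $(\pa X)^n$, of codimension $n$. So the whole analysis reduces to a neighbourhood of a point of $(\pa X)^n$. There, in adapted coordinates $x_i\ge0$, $y_i$ on the $i$-th factor, the diagonal is cut out by the interior equations $y_1=\dots=y_n$ together with $x_1=\dots=x_n$. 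Rewriting the latter as $\log x_i-\log x_{i+1}=0$ exhibits $\Diag(X^n)$ as the common zero set of functions whose b-differentials $dx_i/x_i-dx_{i+1}/x_{i+1}$, together with the ordinary $dy_i-dy_{i+1}$, are smooth and independent sections of $\bT^*(X^n)$; this is exactly the assertion that $\Diag(X^n)$ is a b-submanifold.

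\textbf{Failure of the p-submanifold property.} For the non-existence of a p-submanifold structure I would argue by codimension counting at the corner. Near such a boundary point the interior of $\Diag(X^n)$ lies in the interior of $X^n$, so no boundary defining function vanishes identically on $\Diag(X^n)$; were it a p-submanifold it would therefore be an \emph{interior} p-submanifold, and an interior p-submanifold meets a boundary face of codimension $k$ in a corner of its own of codimension $k$. Here the relevant face is $(\pa X)^n$, of codimension $n$, so $\Diag(X^n)\cong X$ would have to carry a corner of codimension $n$. But $X$ is a manifold with boundary, with corners of codimension at most $1$, and $n\ge2$; this contradiction shows $\Diag(X^n)$ is never a p-submanifold. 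Equivalently, the tangent direction $\pa_{x_1}+\dots+\pa_{x_n}$ at the corner is not a coordinate direction for any local product decomposition, the precise analogue of the diagonal through the corner of $[0,1]^2$.

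\textbf{The lift after blow-up.} For the lift under blow-up of $B=(\pa X)^n$ I would pass to the front-face coordinates of Lemma~\ref{ScatProd.34} with $I=\{1,\dots,n\}$: put $t_B=\sum_i x_i$ and $t_i=x_i/t_B$, so $\sum_i t_i=1$ and $t_B$ defines the new front face. Since $x_i=t_B t_i$ and $t_B>0$ in the interior, the relation $x_1=\dots=x_n$ lifts to $t_1=\dots=t_n$, which under $\sum_i t_i=1$ is the single point $t_i=1/n$, lying in the \emph{interior} of the spherical factor $\bbS^{n-1,n-1}$ (all $t_i>0$). Thus the proper transform of $\Diag(X^n)$, namely the closure of $\beta^{-1}(\Diag(X^n)\setminus(\pa X)^n)$, is locally $\{t_i=1/n,\ y_1=\dots=y_n\}$: it is cut out purely by interior coordinates set to constants, crosses the front face $\{t_B=0\}$ transversally, and meets no other boundary hypersurface (the sphere's own faces $\{t_i=0\}$ are avoided). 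This is precisely an interior p-submanifold. Away from the front face the blow-down is a diffeomorphism and $\Diag(X^n)$ does not touch the boundary at all, so the property there is automatic.

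\textbf{Main obstacle.} The step needing care, and the substance of the argument, is the passage to the closure at $t_B=0$: one must verify that the proper transform really is smooth up to the front face and is given by $t_i=1/n$, rather than acquiring extra components or a tangency. This holds because for $t_B>0$ the diagonal is exactly $\{t_i=1/n,\ y_i\text{ equal}\}$ independently of $t_B$, so its closure merely adjoins the $t_B=0$ slice of the same interior locus. Everything else, the diffeomorphism with $X$, the independence of the b-differentials, and the codimension bookkeeping, is routine once the reduction to the single maximal corner is in hand.
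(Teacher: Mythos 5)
Your proof is correct and follows essentially the same route as the paper: localize at the maximal corner, pass to the projective coordinates $t_B=\sum_i x_i,$ $t_i=x_i/t_B$ of the blow-up, and observe that the lifted diagonal sits over the centre $t_i=1/n$ of the fractional sphere, hence is cut out by interior coordinates alone and is an interior p-submanifold. Your codimension-counting argument for the ``never a p-submanifold'' claim is actually more complete than the paper's own treatment, which disposes of that point only via the informal remark preceding the lemma comparing it to the diagonal in $[0,1]^2.$
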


\begin{proof} A neighbourhood of this `maximal corner' of $X^n$ is of the
  form $[0,1)^n\times (\pa X)^n$ with the coordinate in each of the first
    factors given by a fixed boundary defining function lifted from each
    factor and then denoted $x_j.$ The total diagonal meets this in the
    b-submanifold $\{x_1=\dots=x_n\}\times\Diag_{\tot}(\pa X)^n.$ After
    blowing up the corner a neighbourhood of the front face is of the form
\begin{equation}
[0,1)_\rho\times\bbS^{n-1,n-1}\times(\pa X)^n,\ \rho =x_1+\dots+x_n
\label{ScatProd.54}\end{equation}
to which the total diagonal lifts as
\begin{equation}
[0,1)\times\{\bar\omega\}\times\Diag_{\tot}(\pa X)^n,\ \bar\omega\in\bbS^{n-1,n-1}
\label{ScatProd.55}\end{equation}
being the `centre' of the fractional sphere and hence an interior
point. This shows that the total diagonal in $X^n$ is resolved to a
p-submanifold.
\end{proof}

The general case of a multi-diagonal $D_{\mathfrak{b}}$ is similar;
the next lemma shows that it is resolved in $[X^n;\cC]$ for any
collection of boundary faces $\cC,$ closed under non-transversal
intersection and containing $\fb.$

\begin{proposition}\label{ScatProd.7}  Let $\cC$ be an
  intersection-ordered family of boundary faces of $X^n$ and $\fb\subset
  \cC$ a transversally intersecting subcollection then the lift of $D_\fb$
  to $[X^n;\cC]$ is a p-submanifold.
\end{proposition}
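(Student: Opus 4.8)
The plan is to exploit the freedom in the blow-up order. Since being a p-submanifold is a local condition and, by Proposition~\ref{ScatProd.1}, the iterated blow-up $[X^n;\cC]$ is independent of the intersection-order chosen, I would first select an order adapted to $\fb.$ The family $\fb$ consists of pairwise transversal boundary faces, so it is vacuously closed under non-transversal intersection; hence Lemma~\ref{l1.21.7.8}, applied with $\cC_1=\fb,$ provides an intersection-order on $\cC$ in which every element of $\fb$ is blown up before every element of $\cC\setminus\fb.$ This splits the construction into two phases: a first phase in which only the elements of $\fb$ are blown up, whose purpose is to resolve $D_\fb,$ and a second phase consisting of blow-ups of the (lifts of the) remaining boundary faces, whose purpose is merely to preserve the resolution already achieved.

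For the first phase I would show that $D_\fb$ lifts to a p-submanifold of $[X^n;\fb],$ generalizing Lemma~\ref{ScatProd.53}. Writing each $B_i\in\fb$ as the face with $\pa X$ exactly in the factors of a set $S_i\subset\{1,\dots,n\},$ transversality of $\fb$ means the $S_i$ are pairwise disjoint, and $D_\fb$ imposes, on each group $S_i$ separately, that the corresponding factors all be equal. Near any boundary face met by $D_\fb$ I would use adapted coordinates $(x_j,y_j)$ in each factor and observe that blowing up $B_i$ replaces the $x_j,\ j\in S_i,$ by a front-face variable $\rho_i=\sum_{j\in S_i}x_j$ together with fractional-sphere coordinates $t_j=x_j/\rho_i$; the diagonal condition that the $x_j$ agree becomes the interior condition that the $t_j$ all equal the centre $1/|S_i|$ of $\bbS^{|S_i|-1,|S_i|-1},$ exactly as in the total-diagonal case. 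Because the $S_i$ are disjoint, these resolutions are carried out in independent groups of variables and do not interfere, while the remaining boundary defining functions coming from factors outside $\bigcup_i S_i$ impose no condition on $D_\fb$ and so are harmless. Hence after the first phase $D_\fb$ is cut out entirely by interior coordinates and is a p-submanifold, with the order within $\fb$ immaterial since its elements are transversal (Proposition~\ref{ScatProd.1}).

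For the second phase, the elements of $\cC\setminus\fb$ are boundary faces of $X^n$ and therefore lift to boundary faces of $[X^n;\fb].$ Blowing up a boundary face sends p-submanifolds to p-submanifolds by Lemma~\ref{ScatProd.61}, so the lift of $D_\fb$ remains a p-submanifold throughout the second phase, and in particular in $[X^n;\cC].$

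The step I expect to be the main obstacle is the local analysis in the first phase: one must check that blowing up $\fb$ alone resolves $D_\fb$ at \emph{every} boundary face it meets, including deep corners at which several of the groups $S_i$ are simultaneously activated and at which additional factors outside $\bigcup_i S_i$ also lie in $\pa X.$ The disjointness of the $S_i$ is precisely what makes the simultaneous resolutions commute and combine into a single product of fractional-sphere centres, and making this combination precise is the crux; the two reductions, namely order-independence and the lifting lemma, are then purely formal.
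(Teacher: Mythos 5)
Your proposal is correct and takes essentially the same approach as the paper: the paper likewise uses the transversality of $\fb$ (hence vacuous closure under non-transversal intersection) to choose an intersection-order putting $\fb$ first, resolves $D_\fb$ in $[X^n;\fb],$ and then appeals to the fact that p-submanifolds lift to p-submanifolds under the remaining boundary-face blow-ups (Lemma~\ref{ScatProd.61}). The only cosmetic difference is in the first phase: where you redo the local coordinate computation for several disjoint groups, the paper applies a factor-exchange map to identify $[X^n;\fb]$ with $X^{n-k}\times\prod_{i}[X^{k_i};(\pa X)^{k_i}]$ and $D_\fb$ with a product of total diagonals, so that Lemma~\ref{ScatProd.53} applies factor by factor---which is precisely your observation that the disjoint groups $S_i$ cannot interfere, even at the deep corners you flag as the crux.
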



\begin{proof} Let $\cC = \fb \cup \cC'$ (disjoint union).  Because
  $\cC$ is closed under non-transversal intersection and $\fb$ is a
  transversal collection, there is an intersection order on $\cC$ such that
  all elements of $\fb$ come before any element of $\cC'$, and $\cC'$
  itself is size-ordered. We claim that the lift of $D_\fb$ to 
  $[X^n;\fb]$ is a p-submanifold.

If a factor-exchange map is used to identify
\begin{equation}
X^n \equiv X^{n-k}\times \prod_{i=1}^LX^{k_i}
\label{ScatProd.77}\end{equation}
in such a way that the each element $B_i\in\mathfrak{b}$ is
identified with the corner of maximal codimension in $X^{k_i}$ then the
space obtained by blow up of the elements of $\mathfrak{b}$ is identified
smoothly with
\begin{equation}
X^{n-k}\times\prod_{i=1}^L[X^{k_i};(\pa X)^{k_i}].
\label{ScatProd.78}\end{equation}
Then Lemma~\ref{ScatProd.53} shows that $D_{\mathfrak{b}},$ which is
identified with the product of $X^{n-k}$ and the maximal diagonals in the
$X^{k_i},$ is resolved to a p-submanifold in $[X^n;\mathfrak{b}].$ 

As noted earlier, under the blow-up of boundary faces, a p-submanifold (in
this case an interior p-submanifold) lifts to a p-submanifold. This proves
that the lift remains a p-submanifold under subsequent blow-up of the
elements of $\cC'$.
\end{proof}

Let us next gather some notation and information about intersections of
multi-diagonals. Since the intersection of any two multi-diagonals is
another multi-diagonal, given $\fb_1$ and $\fb_2,$ there exists a
transversal family $\fb_1\Cup\fb_2$ uniquely defined by the condition
\begin{equation}
D_{\mathfrak{b_1}}\cap D_{\mathfrak{b_2}}=D_{\mathfrak{b_1}\Cup\mathfrak{b_2}}.
\label{ScatProd.82}\end{equation}
The family $\fb_1\Cup \fb_2$ will be called the `transversal union' of
$\fb_1$ and $\fb_2.$ It is defined as follows. Partition
$\mathfrak{b_1}\cup\mathfrak{b_2}$ into subsets where two elements lie in
the same subset if and only if there is a chain of elements connecting
them, each intersecting the next non-transversally. Then the elements of
$\fb_1\Cup\fb_2$ consist of the intersections over these subsets.

Clearly, if all pairs $(B_1,B_2)\in \fb_1\times \fb_2$ meet transversally
then $\fb_1\Cup\fb_2 = \fb_1\cup \fb_2.$ Otherwise the transversal union
has fewer elements than the union and they need not be elements of either
collection.

It is also convenient to introduce the following notation:
\begin{itemize}
\item Write $\mathfrak{b}_1\pitchfork\mathfrak{b}_2$ if
  $D_{\fb_1}\pitchfork D_{\fb_2},$ or equivalently if $\fb_1\cup \fb_2
  = \fb_1\Cup \fb_2.$
\item Say $\mathfrak{b}_1$ and $\mathfrak{b}_2$ are comparable if
$\fb_1\Cup\fb_2=\fb_2$ or $\fb_1\Cup\fb_2=\fb_1$ which is equivalent to
  $D_{\fb_2}\subset D_{\fb_1}$ or $D_{\fb_1}\subset D_{\fb_2}.$
\item Otherwise say $\fb_1$ and $\fb_2$ are \ncnt: this is equivalent
to $D_{\fb_1}$ and $D_{\fb_2}$ being \ncnt, or combinatorially, to
the condition that $\mathfrak{b}_1\Cup\mathfrak{b}_2$ is neither
the union nor either of the individual sets of boundary faces.
\end{itemize}

To motivate the discussion of the next section, let us give a local
coordinate description of the multi-diagonal $D_\fb$ and its lift to
$[X^n;\fb].$ Explicitly, there is a set $(I_1,\ldots,I_L)$ of
disjoint subsets of $\{1,\ldots,n\}$, each of cardinality $\geq 2$,
such that
\begin{equation}\label{e1.21.7.8}
  D_\fb = \cap_{r=1}^L\{z_k=z_l\mbox{ for all }k,l\in I_r\}.
\end{equation}
Using adapted local coordinates $z = (x,y)$ near the boundary of $X,$ a
full set of local boundary defining functions for $[X^n;\fb]$ are given by
the $t_{r},$ $B_r\in\fb,$ which are the sums 
\begin{equation}
T_{r}=\sum\limits_{j\in I_r}x_j
\label{24.5.2008.160}\end{equation}
of the local defining functions for $B_r$ and the $t_j=x_j/T_{r}$ if
$j\in B_r$ for some $r$ and and $t_j=x_j$ otherwise. Interior coordinates
lift to interior coordinates. 

In order to describe the lift of $D_\fb$ to $[M;\fb]$ introduce new
variables  
\begin{equation}\label{e3.21.7.8}
s_j = \log t_j,\; u_j = y_j - y_{a_r} \mbox{ if }j \in J_r\mbox{ for
  some }r.
\end{equation}
Then the variables
\begin{equation}
(s_j,u_j,t_k,y_k)\mbox{ for }j \not\in J,\; k \in J,\mbox{ where
}t_k\geq 0
\end{equation}
($J = J_1\cup\ldots\cup J_L$) form an adapted local coordinate system
on $[X^n;\fb]$ with respect to which
\begin{equation}\label{e4.21.7.8}
\tilde{D}_\fb = \{s_j=0, u_j=0;j\in J\}.
\end{equation}

Let us now consider the family $\cD(\fb)$ of all multi-diagonals containing
$D_\fb.$ It is clear that if $D \in \cD(\fb),$ then $D$ must be an
intersection of the form
\begin{equation}
D =\{s_j=0, u_j=0;j \in K_0\}
\cap\cap_{r=1}^M\{s_i=s_j, u_i=u_j\mbox{ for all }i,j\in K_r\}.
\end{equation}
where $K_0,K_1,\ldots, K_r$ are disjoint subsets. In fact, for each
$r=1,\ldots, M$, $K_r$ must be contained in one of $J_1,\ldots,J_L.$

This discussion shows that any given multi-diagonal lifts to a
p-submanifold, but that there is no single system of adapted coordinates
which put all elements of $\cD$ in standard form.  The notion of a
d-collection, which we introduce in the next section, is designed to
capture the local structure of families like $\cD.$

\section{D-collections}\label{D-coll}

Next we introduce a notation for collections of p-submanifolds which
includes the resolutions of diagonals.

Any p-submanifold is locally of the form \eqref{24.5.2008.161} in adapted
coordinates. The \emph{interior codimension} is $d=|I|;$ $Y$ is an interior
p-submanifold if $k=0,$ i\@.e\@.~no boundary variables are involved in its
definition, otherwise it is contained in a unique boundary face of
codimension $k$ (its \emph{boundary hull} B(Y)). If $N$ is such that $d+N$
is less than or equal to the number of interior variables then $Y$ can
alternatively be brought to a {\em local diagonal} form relative to the
adapted coordinates in the sense that
\begin{equation}
U\cap Y = \{(x,y) \in U; x_i=0,\ 1\le i\le k,\ y_i = y_j\text{ for all }i,j\in
\mathfrak{l}_l,\ l=1,\dots,N\}
\label{24.5.2008.157}\end{equation}
where the $\mathfrak{l}_l$ are some disjoint subsets (including possibly
none) each having cardinality $\geq 2.$

Of course to get \eqref{24.5.2008.157} one just needs to divide the
interior coordinates into groups and subtract one of $N$ of the remaining
interior variables from each element of each set. The interior
codimension of $Y$ (which is constant, since $Y$ is connected by
assumption) is $\sum\limits_{l}(|\mathfrak{l}_l|-1).$

The important property of boundary diagonals that we wish to capture in the
notion of a d-collection is that they can simultaneously brought to such
diagonal form near any point.

\begin{definition} A collection $\cE$ of p-submanifolds (if not connected
  then each must have fixed dimension) in $M$ is called a
  \emph{d-collection} if for each point $p\in M$ there is one set of adapted
  coordinates based at that point in terms of which all the elements of
  $\cE$ through that point take the form \eqref{24.5.2008.157}.
\end{definition}
\noindent Clearly this condition is void locally at any point not contained
in one of the elements of $\cE$ and for any single p-submanifold which does
not have maximal interior codimension. Any collection of boundary faces can
be added to a d-collection and it will remain a d-collection since they are
automatically of the form \eqref{24.5.2008.157} (for any adapted
coordinates) for the empty collection of disjoint sets $\mathfrak{l}_l.$

We will decompose a d-collection into the subcollections of elements which
are and those which are not boundary faces.
\begin{equation}
\cE=\cE_{\bo}\cup\cE',\ \cE_{\bo}=\cE\cap\mathcal{M}(M).
\label{24.5.2008.149}\end{equation}
As usual, including a boundary hypersurface in a given collection $\cE$ is
a matter of convention; for the sake of definiteness we exclude hyerpsurfaces.

\begin{lemma}\label{24.5.2008.158} Any subcollection of the boundary faces
  of the elements of a d-collection (with the addition of any of the
non-hypersurface boundary faces of the manifold) is a d-collection.
\end{lemma}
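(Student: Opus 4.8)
The plan is to verify the defining property of a d-collection directly, reusing the coordinate system supplied by the hypothesis on $\cE.$ Write $\cE^\sharp$ for the new collection: a subcollection of the boundary faces of the elements of $\cE,$ together with any adjoined non-hypersurface boundary faces of $M.$ Fixing an arbitrary point $p\in M,$ I would invoke the d-collection property of $\cE$ to choose one set of adapted coordinates $(x,y)$ based at $p$ in which every element of $\cE$ passing through $p$ has the diagonal form \eqref{24.5.2008.157}. The whole argument then reduces to checking that this same coordinate system puts every element of $\cE^\sharp$ through $p$ into that form as well.

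There are two cases. First, if $F\in\cE^\sharp$ is one of the adjoined boundary faces of $M$ with $p\in F,$ then in any adapted coordinates it is cut out by the vanishing of some boundary defining functions, so it has the form \eqref{24.5.2008.157} with the empty collection of diagonal sets $\mathfrak{l}_l$; this is precisely the remark already made just after the definition. Second, suppose $F$ is a boundary face of some $Y\in\cE$ with $p\in F.$ Then $p\in F\subseteq Y,$ so $Y$ passes through $p$ and in the chosen coordinates
\[
Y=\{x_i=0,\ 1\le i\le k;\ y_i=y_j\text{ for }i,j\in\mathfrak{l}_l,\ l=1,\dots,N\}.
\]
The key step is the observation that, near $p,$ the boundary hypersurfaces of $Y$ are exactly the traces $Y\cap\{x_i=0\}$ for $i>k$ (the boundary hypersurfaces of $M$ not already containing $Y$), so any boundary face of $Y$ is locally $F=Y\cap\{x_i=0:i\in S\}$ for a set $S$ of indices with $i>k.$ Hence
\[
F=\{x_i=0,\ i\in\{1,\dots,k\}\cup S;\ y_i=y_j\text{ for }i,j\in\mathfrak{l}_l,\ l=1,\dots,N\},
\]
which is again of the form \eqref{24.5.2008.157}, with the \emph{same} diagonal sets and a larger set of vanishing boundary coordinates.

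Since $p$ was arbitrary, this would complete the argument. The only point needing care---and the nearest thing to an obstacle---is that a single coordinate system must simultaneously handle all elements of $\cE^\sharp$ through $p,$ rather than one system per element. This is automatic here: every such $F$ that is not a boundary face of $M$ lies in an element $Y\in\cE$ with $p\in Y,$ and the d-collection hypothesis already furnishes one system diagonalizing all those $Y$ at once, while the boundary faces of $M$ are in standard form in every adapted system. The substantive content is thus entirely local and amounts to the elementary fact that passing to a boundary face only imposes additional boundary conditions and leaves the interior diagonal conditions untouched.
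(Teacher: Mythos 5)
Your proof is correct and takes the same route as the paper, whose entire proof reads ``Immediate from the definition'': you have simply spelled out that immediacy, checking that the single coordinate system furnished by the d-collection property of $\cE$ at each point also puts boundary faces of elements (extra vanishing boundary coordinates, same diagonal sets $\mathfrak{l}_l$) and adjoined boundary faces of $M$ (empty collection of $\mathfrak{l}_l$) into the form \eqref{24.5.2008.157}. No gaps; the key observation that one coordinate system suffices simultaneously for the whole new collection is exactly the point, and you address it correctly.
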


\begin{proof} Immediate from the definition.
\end{proof}

As already mentioned, the lifts of the diagonals give examples of
d-collections provided the appropriate boundary faces have been blown up.

\begin{proposition}\label{ScatProd.56} If $\mathcal{C}\subset\cB_{\bo}$ is
  closed under non-transversal intersection then all the diagonals
  $D_{\mathfrak{b}}$ with $\mathfrak{b}\subset \mathcal{C}$
  lift from $X^n$ to $[X^n;\mathcal{C}]$ to interior p-submanifolds which form a
  d-collection.
\end{proposition}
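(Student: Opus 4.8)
The plan is to separate the two assertions — that each $D_\fb$ lifts to an \emph{interior} p-submanifold, and that the totality of these lifts is a d-collection — and to treat the second as the substantive point.

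For the first assertion I would simply invoke Proposition~\ref{ScatProd.7}: since $\fb\subset\cC$ is a transversal subcollection of a collection closed under non-transversal intersection, the lift of $D_\fb$ to $[X^n;\cC]$ is a p-submanifold. To see that it is \emph{interior} I would factor the blow-up through $[X^n;\fb]$, which is legitimate by the reordering freedom of Proposition~\ref{ScatProd.1} since $\fb$ is transversal and may be blown up first. The factor-exchange decomposition \eqref{ScatProd.78} together with Lemma~\ref{ScatProd.53}, applied in each factor, shows that $D_\fb$ is resolved to the centre of a product of fractional spheres, hence to an interior p-submanifold of $[X^n;\fb]$ whose defining functions are the interior coordinates $s_j,u_j$ of \eqref{e4.21.7.8}. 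The remaining centres $\cC\setminus\fb$ are boundary faces; since interior coordinates lift to interior coordinates under boundary blow-up, the functions $s_j,u_j$ persist as interior coordinates and $\tilde D_\fb$ remains an interior p-submanifold in $[X^n;\cC]$.

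For the d-collection property I would fix a point $p\in[X^n;\cC]$ and produce a \emph{single} adapted coordinate system in which every lifted diagonal through $p$ takes the diagonal form \eqref{24.5.2008.157}. The first — and I expect hardest — step is to show that the diagonals through $p$ possess a common finest member, i.e.\ that there is a transversal $\fb_*\subset\cC$ with $p\in\tilde D_{\fb_*}$ and $\tilde D_{\fb'}\supset\tilde D_{\fb_*}$ whenever $p\in\tilde D_{\fb'}$. The key inputs are that $\cC$ is closed under non-transversal intersection — so that $\fb_1\Cup\fb_2\subset\cC$ and $D_{\fb_1}\cap D_{\fb_2}=D_{\fb_1\Cup\fb_2}$ of \eqref{ScatProd.82} is again one of our diagonals — together with an analogue for diagonals of Lemma~\ref{24.5.2008.150}, guaranteeing that $p\in\tilde D_{\fb_1}\cap\tilde D_{\fb_2}$ forces $p\in\widetilde{D_{\fb_1\Cup\fb_2}}$. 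Taking $\fb_*$ to be the transversal union of all $\fb$ with $p\in\tilde D_\fb$ then yields the finest diagonal through $p$.

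Granting this, all diagonals through $p$ lie in the family $\cD(\fb_*)$ of multi-diagonals containing $D_{\fb_*}$, and I would finish by invoking the explicit local computation of \S\ref{Multidiags}: in the coordinates $(s_j,u_j,t_k,y_k)$ adapted to $[X^n;\fb_*]$ every element of $\cD(\fb_*)$ is simultaneously of the diagonal form, the conditions $s_i=s_j$, $u_i=u_j$ involving only the interior coordinates. It then remains to check that passing from $[X^n;\fb_*]$ to $[X^n;\cC]$ does not disturb this: the outstanding centres are boundary faces, and near the interior point $p\in\tilde D_{\fb_*}$ their blow-up refines only the boundary coordinates $t_k$ and the fractional-sphere variables, leaving the diagonal coordinates $(s_j,u_j)$ intact. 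Hence the single coordinate system survives and witnesses the d-collection property. The principal obstacle throughout is the finest-diagonal step, where the combinatorics of $\Cup$ and the persistence of intersections of lifts must be controlled.
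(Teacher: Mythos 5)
Your proposal is correct, and in substance it is the paper's own argument: the printed proof consists of the single remark that ``the same coordinates work for all diagonals,'' i.e.\ a citation of the coordinate discussion at the end of \S\ref{Multidiags}, and what you have written is the scaffolding that citation silently requires. Your first part (p-submanifold via Proposition~\ref{ScatProd.7}, factoring through $[X^n;\fb]$ using Lemma~\ref{ScatProd.53}, then persistence of interior coordinates under boundary blow-up) is exactly the paper's route. Your second part makes explicit two things the paper never states: the reduction to a finest diagonal $D_{\fb_*}$ through a given point, so that every lifted diagonal through $p$ lies in the family $\cD(\fb_*)$ covered by the coordinates \eqref{e4.21.7.8}, and the check that blowing up the remaining centres $\cC\setminus\fb_*$ (all boundary faces, which may be taken last in an intersection-order) preserves the diagonal form \eqref{24.5.2008.157}; the latter is the same mechanism as the boundary-face case in the proof of Proposition~\ref{24.5.2008.137} and is unproblematic.

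The one place where your plan is not yet a proof is the step you yourself flag: the inclusion $\tilde D_{\fb_1}\cap\tilde D_{\fb_2}\subset\widetilde{D_{\fb_1\Cup\fb_2}}$ in $[X^n;\cC]$. This genuinely needs an argument: for general submanifolds the intersection of lifts can strictly exceed the lift of the intersection (two curves tangent at a blown-up point already exhibit this), and the proof of Lemma~\ref{24.5.2008.150} is specific to boundary faces and does not transfer verbatim to diagonals. What saves the claim here is that the lifted diagonals and the boundary-face centres always meet cleanly, in product-type and never tangential position, so at each blow-up the normal directions picked up by the two lifts over the centre agree only along those of the intersection. Combinatorially this is exactly what the paper establishes for the boundary diagonals in \S\ref{b-diag} (equation \eqref{ScatProd.62} and Lemma~\ref{ScatProd.83}, i.e.\ \eqref{ScatProd.84}), by the same reordering devices (Proposition~\ref{ScatProd.1}, Lemma~\ref{l1.21.7.8}) that you invoke elsewhere; those arguments do not use Proposition~\ref{ScatProd.56} itself, so there is no circularity in adapting them to close your gap. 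With that lemma supplied your proof is complete, and it is considerably more detailed than the one printed in the paper.
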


\begin{proof} This follows from the discussion at the end of the preceding
section since the same coordinates work for all diagonals.
\end{proof}

For a d-collection $\cE$ of p-submanifolds we consider a closure
condition corresponding to the index sets in \eqref{24.5.2008.157} that
define them.  Let $\mathfrak{l}$ and $\mathfrak{l}'$ be two subpartitions
of the index set (of interior coordinates). Then as in \S\ref{Sect-Md} we write
\begin{equation}
\begin{gathered}
\mathfrak{l}\Subset\mathfrak{l}'\text{ if each set
}\mathfrak{l}_i\in\mathfrak{l}\text{ is contained in one of the 
}\mathfrak{l}'_j\\ 
\mathfrak{l}\pitchfork\mathfrak{l}'\text{ all sets
}\mathfrak{l}_i\text{ are disjoint from all sets }\mathfrak{l}_j.
\end{gathered}
\label{24.5.2008.140}\end{equation}
Thus in the second case $\mathfrak{l}\cup\mathfrak{l}'$ is still a
subpartition. 

Now the condition we impose on $\cE$ concerns the elements which are not
boundary faces, and which pass through a given point
\begin{equation}
\begin{gathered}
\forall\ E,\ E'\in\cE'\Mand p\in E\cap E'\text{ if neither condition
  in \eqref{24.5.2008.140} holds}\\
\text{for the index sets defining them then}\\
\exists\ F\in \cE',\ p\in F\text{ with the same index set as }E\cap
E',\\
\text{containing it and with boundary hull in }B(E)\dotplus B(E').
\end{gathered}
\label{24.5.2008.139}\end{equation}

\begin{proposition}\label{24.5.2008.137} If $\cE$ is a d-collection of
p-submanifolds all contained in (or equal to) proper boundary faces of a
manifold with corners $M$ for which the closure condition
\eqref{24.5.2008.139} holds then on the blow up of an
element $G\in\cE_{\bo}$ or an element $E\in\cE'$ of maximal interior codimension
the elements of $\cE\setminus\{E\}$ lift to a d-collection in $[M;E]$ which
again satisfies the closure condition.
\end{proposition}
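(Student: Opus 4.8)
The claim has two parts: (1) that the lifts of $\cE\setminus\{E\}$ form a d-collection in $[M;E]$, and (2) that the closure condition \eqref{24.5.2008.139} persists. The plan is to work entirely in local adapted coordinates, exploiting the fact that a d-collection by definition admits, at each point, a single system of adapted coordinates in which every element is in diagonal form \eqref{24.5.2008.157}. I would split into the two cases allowed by the hypothesis: $G\in\cE_{\bo}$ (blowing up a boundary face) and $E\in\cE'$ of maximal interior codimension (blowing up a genuine diagonal-type p-submanifold). The boundary-face case should be the easier one, since Lemma~\ref{ScatProd.58}—style reasoning from \S\ref{Bbu} already controls how boundary faces and diagonals lift, and the coordinates $t_B,t_j$ introduced there are explicit.

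**First I would handle the d-collection property.** Fix a point $q\in[M;E]$ lying over $p=\beta(q)\in M$, and choose the single adapted coordinate system at $p$ putting all elements of $\cE$ through $p$ simultaneously into the form \eqref{24.5.2008.157}. The task is to produce one adapted system at $q$ doing the same for the lifts. When blowing up a boundary face $G$, the new coordinates are $t_G=\sum_{i\in I}x_i$ together with $t_j=x_j/t_G$ or $t_j=x_j$, and the interior coordinates $y_j$ are unchanged; since the diagonal conditions $y_i=y_j$ involve only interior coordinates, every diagonal element stays in diagonal form, and the boundary-coordinate conditions transform in the controlled way recorded in \eqref{e2.16.7.8}. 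When blowing up an interior p-submanifold $E$ of maximal interior codimension, the relevant change of variables is the one displayed at the end of \S\ref{Multidiags}: $s_j=\log t_j$, $u_j=y_j-y_{a_r}$, which resolves $E$ to $\{s_j=0,u_j=0\}$ and—crucially—keeps every other diagonal through $p$ in diagonal form, because by the structure recorded after \eqref{e4.21.7.8} each block $K_r$ of a larger diagonal is contained in a single block $J_l$ of $E$, so subtracting the common base coordinate is compatible. Maximality of the interior codimension of $E$ is what guarantees that $E$ is (locally) the finest diagonal through $p$, so no element we are lifting is strictly contained in $E$ in the interior directions, and the front-face blow-up does not split any of them.

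**Next, the closure condition.** Take $F',F''\in(\cE\setminus\{E\})'$ whose lifts pass through $q$ and for which neither alternative of \eqref{24.5.2008.140} holds. I would pull back to $p$ and invoke the closure condition for $\cE$ at $p$, which supplies an $F\in\cE'$ with the index set of $F'\cap F''$, containing it, with boundary hull inside $B(F')\dotplus B(F'')$. The point is then to check that the lift of this witness $F$ still works at $q$: its index set is preserved because index sets are read off from the interior diagonal blocks, which (as above) transform compatibly; its containment of the intersection is preserved because lifting is monotone on these nested diagonals; and the boundary-hull condition $B(F)\subset B(F')\dotplus B(F'')$ is preserved using the behaviour of $\dotplus$ under blow-up, which is exactly the content controlled by Corollary~\ref{24.5.2008.143} and the $\dotplus$ computations of Lemma~\ref{ScatProd.34}. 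The only subtlety is the case where $E$ itself was the witness at $p$ (or coincides with $F'\cap F''$): here I would use maximality of the interior codimension of $E$ to argue that after blowing $E$ up the two lifts $\widetilde{F'},\widetilde{F''}$ have in fact become transversal or comparable at $q$, so the hypothesis of \eqref{24.5.2008.139} is vacuous at $q$ and nothing needs checking.

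**Main obstacle.** The genuinely delicate step is the second case of part~(1)—verifying that blowing up an interior p-submanifold of maximal codimension keeps all the \emph{other}, larger diagonals in simultaneous diagonal form, together with the vacuity argument in the closure step when $E$ is the witness. The difficulty is bookkeeping: one must confirm that the substitution $u_j=y_j-y_{a_r}$ chosen to resolve $E$ does not break the diagonal form of a coarser diagonal whose block straddles several of $E$'s blocks, and conversely that after this blow-up a previously \ncnt pair whose intersection was $E$ genuinely separates. I expect this to reduce to the local partition combinatorics already laid out after \eqref{e4.21.7.8}—each $K_r$ sits inside a single $J_l$—so the substitutions are nested rather than conflicting; making that reduction precise, and matching it to the $\dotplus$ bookkeeping from Lemma~\ref{ScatProd.34}, is where the real work lies.
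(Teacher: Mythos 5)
Your boundary-face case ($G\in\cE_{\bo}$) is essentially the paper's argument and is fine. The genuine gap is in the other case, the blow-up of $E\in\cE'$ of maximal interior codimension: your argument stops before the blow-up actually happens. The substitution $u_j=y_j-y_{a_r}$ (and the variables $s_j=\log t_j$, which at the end of \S\ref{Multidiags} describe the lift of a diagonal to the \emph{boundary} blow-up $[X^n;\fb]$, not the blow-up of a diagonal) only brings $E$ into standard p-submanifold form at $p\in M$; it produces no adapted coordinates at a point $q$ of the front face of $[M;E]$, which is where the d-collection condition must be checked. There the blow-up is given locally by polar (projective) coordinates in the variables cutting out $E$, and the lift of an element $F$ whose index set is $\Subset$ that of $E$ (equivalently, whose interior conditions are implied by those of $E$) is cut out by the \emph{vanishing} of some interior angular variables together with equalities among others. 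A condition ``angular variable $=0$'' is not of the form \eqref{24.5.2008.157}, so the lifted family is not visibly a d-collection. The paper's proof repairs this with a specific device: add the lifted base coordinate $y_{a_r}$, which survives as an interior coordinate on $[M;E]$, to every lifted interior angular variable, turning each vanishing condition into an equality with $y_{a_r}$ while preserving the other equalities, so that \eqref{24.5.2008.157} is restored simultaneously for all lifts. Nothing in your proposal plays this role. Note also that the preliminary classification of the elements meeting $E$ (index set $\Subset$ that of $E$, or transversal to it) is not merely ``$E$ is the finest diagonal through $p$'': it requires maximality \emph{combined with} the closure condition \eqref{24.5.2008.139}, since a partially overlapping index set would force a witness $F\in\cE'$ of interior codimension strictly larger than that of $E$.

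Second, your justification that the witness survives, namely ``its containment of the intersection is preserved because lifting is monotone on these nested diagonals,'' is false as a general principle, and it is exactly the failure mode that the boundary-hull clause of \eqref{24.5.2008.139} exists to exclude. If $F'$ and $F''$ are both contained in the centre $G$ but the witness $F$ satisfies $F\setminus G\neq\emptyset$, then $F'\cap F''$ lifts to its full preimage (a sphere-bundle worth of points in the front face) while $F$ lifts to the closure of $F\setminus G$, and the containment genuinely fails. The paper's proof of the boundary-face case isolates precisely this danger and rules it out by the hull condition: $B(F)\subset B(F')\dotplus B(F'')\subset G$ forces $F\subset G$, so $F$ also lifts to its preimage. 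Citing monotonicity here assumes what has to be proved. (Your observation that a pair whose witness was $E$ itself becomes disjoint, hence unproblematic, after $E$ is blown up is correct in spirit, but it does not repair either of the gaps above.)
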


\begin{proof} Certainly the blow up of $E$ is well-defined, since it is a
  p-submanifold by assumption.

The simplest case is if $G\in\cE_{\bo}$ is actually a boundary face. Then all
the other p-submanifolds certainly lift to p-submanifolds. The other
elements of $\cE_{\bo}$ lift to boundary faces, the elements of $\cE'$
which are not contained in $G$ lift to the closures of the complements with
respect to $G$ and the elements of $\cE'$ which are contained in $G$ lift
to their preimages. Away from $G$ nothing has changed and near it, the
boundary defining functions $x_1,\dots,x_k$ which define it are replace by
their sum $T_G$ and $t_j=x_j/T_G.$ The defining conditions involving
interior variables are unchanged by the blow up. The changes of
intersections of elements of $\cE'$ correspond to whether they are
contained in $G$ or not and so \eqref{24.5.2008.157} persists everywhere
locally, with only the boundary functions changing. The closure condition
also persists at every point of intersection after blow up, since the only
problem would from $E,$ $E'\subset G$ but $F\setminus G\not=\emptyset,$
since then the lift of $F$ would not contain the lift of the intersection
of $E$ and $E'.$ The last condition in \eqref{24.5.2008.139}, on the hulls,
prevents this from happening, since if $E$ $E'\subset G$ both then the
boundary hull of $F$ must also be contained in $G$ and the boundary hull of
its lift must be contained in the hull of the lifts of $E$ and $E'.$

So next consider the blow up of an element $Y\in\cE'$ with boundary hull
$B.$ The condition of maximality of its interior codimension, i\@.e\@. the number
of equations defining it within $B,$ means, by \eqref{24.5.2008.139} that
the only other elements of $\cE'$ it meets must satisfy one of the
conditions in \eqref{24.5.2008.140}, since otherwise the $F$ whose
existence is demanded by \eqref{24.5.2008.139} would have larger interior
codimension. In particular the only way another element of $\cE'$ can be
contained in $Y$ is if it is a boundary face of $Y,$ hence has the same
index set but boundary hull which is a boundary face of the boundary hull
of $Y.$ On blow up of $Y$ these p-submanifolds lift to boundary faces of
the front face produced by the blow up (which is one good reason boundary
faces are allowed in the definition of d-collections). So consider elements
of $\cE'$ which meet, but are not contained in $Y.$ By the maximality of
the interior codimension of $Y,$ these corresponds to index sets in one of
the two cases in \eqref{24.5.2008.140} so fall into two classes, those with
interior defining conditions implied by the defining conditions for $Y$ and
those involving variables which are completely independent of those
defining $Y.$ The latter clearly lift to have the same defining conditions
and with hull simply the lift of the previous hull.

The blow up of $Y$ can be made explicit locally by choosing one of the
elements labelled by the $\mathfrak{l}_{i}$ and subtracting it from the
others. This changes the defining conditions for $Y$ into the vanishing of
interior variables and boundary variables, so locally the blow up
corresponds to polar coordinates in these variables. All the elements of
$\cE'$ meeting $Y,$ but not contained in it, and corresponding to the first
case in \eqref{24.5.2008.140} have lifts defined by some of the new
boundary variables (not including $\rho _{\ff})$ and the vanishing of some
of the interior polar variables. It follows that the same intersection
property \eqref{24.5.2008.139} results by simply adding the lifted `extra
variable' to each of these lifted interior variables. Thus it follows that
the the d-submanifold condition holds for the lift of the elements of
$\cE\setminus\{Y\}.$

It remains only to check the closure condition, but this persists from the
same arguments since the local index sets have not changed.
\end{proof}

Notice that the interior codimension of each element of the lift of the
d-collection on the blow up of a boundary face is the same as before blow
up. On blow up of an element of $Y\in\cE'$ with maximal interior codimension,
all the lifts have the same interior codimension except for any which are
boundary faces of $E$ itself, which lift to boundary faces of $[M;E]$ and
hence are subsequently boundary faces, for which the notion of an interior
codimension is not defined.

\section{Boundary diagonals}\label{b-diag}

In Proposition~\ref{ScatProd.7} it is shown that the diagonal
$D_{\mathfrak{b}}$ associated to a transversal subset
$\mathfrak{b}\subset\cB_{(2)}$ lifts to a p-submanifold 
of $[X^n;\mathcal{C}]$ provided $\mathfrak{b}\subset\mathcal{C}.$ If
$A\in\cM(\cap\fb)$ is a boundary face of the intersection of the elements
of $\fb$ and $\tilde A$ is its lift to $[X^n;\fb]$ we denote the
intersection by
\begin{equation}
H_{A,\mathfrak{b}}=\tilde A\cap D_{\fb}\ A\subset\cap\mathfrak{b}.
\label{ScatProd.58}\end{equation}
These are all p-submanifolds, indeed they are each interior p-submanifolds of the
corresponding boundary face $\tilde A,$ henceforth denoted $A$ again, which
is the boundary-hull, i\@.e\@. $H_{A,\mathfrak{b}}$ is contained in no
smaller boundary face than $A.$ As such their lifts are always well-defined
under blow up of boundary faces (for us only lifted from $X^n)$ and
$H_{A,\mathfrak{b}}$ remains an interior p-submanifolds of the lift of $A.$
Thus we conclude that
\begin{equation}
\begin{gathered}
\text{Provided }\mathfrak{b}\subset\mathcal{C},\
H_{A,\mathfrak{b}}\subset[X^n;\mathcal{C}]
\text{ is an interior p-submanifold of }\\
A\text{ lifted to }[X^n;\mathcal{C}],\ \forall\ A\subset
B(\fb)=\cap\mathfrak{b}.
\end{gathered}
\label{ScatProd.59}\end{equation}

If fact if $\mathfrak{b}\subset\mathcal{C}$ then \eqref{ScatProd.58} still
holds in after further blow ups:
\begin{equation}
H_{A,\mathfrak{b}}=A\cap D_{\mathfrak{b}}
\Min[X^n;\mathcal{C}],\ A\subset\cap\mathfrak{b}.
\label{ScatProd.62}\end{equation}
This allows us to compute the intersection of $H_{A,\mathfrak{b}}$ and
$H_{A',\mathfrak{b}'}$ in any $[X^n;\mathcal{C}]$ in which they are both
defined, i\@.e\@. if $\mathfrak{b}\cup\mathfrak{b}'\subset\mathcal{C},$
$A\subset\cap\mathfrak{b}$ and $A'\subset\cap\mathfrak{b}'.$

First, if $\mathfrak{b},$ $\mathfrak{b}'\subset\cB_{(2)}$ are each
transversal subsets their `transversal union' is defined by, and following,
\eqref{ScatProd.82}.

Consider the boundary diagonals which lie in a given boundary face. As
already noted, if $A\in\cB_{(2)}$ then $H_{A,\mathfrak{b}}$ is a
p-submanifold of the lift of $A$ to $[X^n;\mathcal{C}]$ provided
$\mathfrak{b}\subset\mathcal{C}$ and $A\subset\cap\mathfrak{b}.$ We need to
blow up all these submanifolds, as $A$ and $\mathfrak{b}$ vary over all
such possibilities. Notice that if $\mathfrak{b},$
$\mathfrak{b}'\subset\mathcal{C},$ with the latter closed under
non-transversal intersection, then
$\mathfrak{b}\Cup\mathfrak{b}'\subset\mathcal{C}$ since the elements are
all non-transversal intersections of elements of $\mathfrak{b}$ and
$\mathfrak{b}'.$

From \eqref{ScatProd.82} we conclude:

\begin{lemma}\label{ScatProd.83} If
  $\mathfrak{b}_i\subset\mathcal{C}\subset\cB_{(2)}$ are two 
  transversal subsets for $i=1,2$ and
  $A\subset(\cap\mathfrak{b}_1)\cap(\cap\mathfrak{b}_2)$ is a common
  boundary face then $\mathfrak{b}_1\Cup\mathfrak{b}_2\subset\mathcal{C},$
  $A\subset\cap(\mathfrak{b}_1\Cup\mathfrak{b}_2)$ and
\begin{equation}
H_{A,\mathfrak{b}_1}\cap H_{A,\mathfrak{b}_2}=H_{A,\mathfrak{b}_1\Cup\mathfrak{b}_2}\Min
[X^n;\mathcal{C}].
\label{ScatProd.84}\end{equation}
\end{lemma}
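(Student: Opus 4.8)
The plan is to dispose of the two set-theoretic assertions first and then focus on the displayed identity \eqref{ScatProd.84}, which carries the real content. That $\fb_1\Cup\fb_2\subset\cC$ is immediate: by the construction of the transversal union every element of $\fb_1\Cup\fb_2$ is an iterated non-transversal intersection of elements of $\fb_1\cup\fb_2\subset\cC$, and $\cC$ is closed under non-transversal intersection — this is precisely the remark recorded just before the statement. For $A\subset\cap(\fb_1\Cup\fb_2)$ I would note that $A\subset\cap\fb_1$ and $A\subset\cap\fb_2$ force $A\subset B$ for every $B\in\fb_1\cup\fb_2$; since each element of $\fb_1\Cup\fb_2$ is an intersection of some of these $B$, it too contains $A$.

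For \eqref{ScatProd.84} I would first invoke \eqref{ScatProd.62} to rewrite each boundary diagonal as the intersection of a lifted boundary face with a lifted diagonal inside $[X^n;\cC]$, so that, writing tildes for lifts,
\begin{equation*}
H_{A,\fb_1}\cap H_{A,\fb_2}=\tilde A\cap\tilde D_{\fb_1}\cap\tilde D_{\fb_2},\qquad
H_{A,\fb_1\Cup\fb_2}=\tilde A\cap\tilde D_{\fb_1\Cup\fb_2}.
\end{equation*}
The task thereby reduces to comparing $\tilde D_{\fb_1}\cap\tilde D_{\fb_2}$ with $\tilde D_{\fb_1\Cup\fb_2}$ after intersecting with $\tilde A$. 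One inclusion is cheap: in $X^n$ we have $D_{\fb_1\Cup\fb_2}=D_{\fb_1}\cap D_{\fb_2}$ by \eqref{ScatProd.82}, hence $D_{\fb_1\Cup\fb_2}\subset D_{\fb_i}$, and since the lift of well-placed submanifolds is monotone under inclusion, $\tilde D_{\fb_1\Cup\fb_2}\subset\tilde D_{\fb_1}\cap\tilde D_{\fb_2}$; intersecting with $\tilde A$ gives $H_{A,\fb_1\Cup\fb_2}\subset H_{A,\fb_1}\cap H_{A,\fb_2}$.

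The reverse inclusion is where I expect the difficulty, since in general the intersection of lifts is strictly larger than the lift of the intersection — compare Lemma~\ref{24.5.2008.150}, where boundary faces can even be pulled apart under blow up. Here the mechanism that rescues us is the d-collection structure: by Proposition~\ref{ScatProd.56} the diagonals $D_\fb$ with $\fb\subset\cC$ lift to interior p-submanifolds forming a d-collection in $[X^n;\cC]$, so near any point $p\in H_{A,\fb_1}\cap H_{A,\fb_2}$ there is a \emph{single} system of adapted coordinates in which $\tilde D_{\fb_1}$, $\tilde D_{\fb_2}$ and $\tilde D_{\fb_1\Cup\fb_2}$ are simultaneously in the local diagonal form \eqref{24.5.2008.157}. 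In these coordinates the three diagonals are cut out by \emph{linear} conditions $y_i=y_j$ on the interior variables (together with the boundary conditions of their common hull), the index sets of $\tilde D_{\fb_1}$ and $\tilde D_{\fb_2}$ being two subpartitions whose join is exactly the index set attached to $\fb_1\Cup\fb_2$; that this matching of ``join of partitions'' with ``transversal union'' is realized by an element of the collection is precisely what the closure condition \eqref{24.5.2008.139} guarantees. Hence the common solution set of the two linear systems is cut out by the combined (join) equations, i.e. equals $\tilde D_{\fb_1\Cup\fb_2}$ locally near $p$; letting $p$ range over $H_{A,\fb_1}\cap H_{A,\fb_2}$ yields $\tilde A\cap\tilde D_{\fb_1}\cap\tilde D_{\fb_2}\subset\tilde D_{\fb_1\Cup\fb_2}$, completing \eqref{ScatProd.84}. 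The one point demanding care, and the crux of the whole argument, is checking that the simultaneous diagonal coordinates supplied by the d-collection genuinely linearize all three diagonals at once, so that the intersection is controlled by the combinatorics of partitions rather than by any blow-up artefact.
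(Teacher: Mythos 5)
Your reduction via \eqref{ScatProd.62} and the easy inclusion $\tilde D_{\fb_1\Cup\fb_2}\subset\tilde D_{\fb_1}\cap\tilde D_{\fb_2}$ are both fine (the diagonals always meet the interior of $X^n,$ so their lifts are closures of complements at every stage and lifting is monotone under inclusion). The gap is in the reverse inclusion, and it is a circularity. The d-collection property furnished by Proposition~\ref{ScatProd.56} only provides, at a point $p,$ adapted coordinates linearizing those elements of the collection which pass \emph{through} $p$; to place $\tilde D_{\fb_1\Cup\fb_2}$ in simultaneous diagonal form near $p\in H_{A,\fb_1}\cap H_{A,\fb_2}$ you must already know that $p\in\tilde D_{\fb_1\Cup\fb_2},$ which is precisely the content of the inclusion being proved. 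Your appeal to the closure condition \eqref{24.5.2008.139} does not repair this: that condition is a \emph{hypothesis} of Proposition~\ref{24.5.2008.137}, not a consequence of Proposition~\ref{ScatProd.56}; in the paper it is verified only for collections of boundary diagonals, in Proposition~\ref{ScatProd.86}, and that verification rests on \eqref{ScatProd.62} together with exactly the intersection statement \eqref{ScatProd.84} you are trying to prove (via \eqref{24.5.2008.153}). So, as written, the crux of your argument assumes what this lemma is needed to deliver later in the paper.

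The local-linearization idea can nevertheless be salvaged, and the repaired argument is genuinely different from the paper's own proof, which instead reorders the blow-ups: by Proposition~\ref{ScatProd.1} one may blow up first $\fb_1,$ then the elements of $\fb_1\Cup\fb_2,$ then $\fb_2,$ check \eqref{ScatProd.84} explicitly in that order, and conclude by order-independence. To fix your version, use only that $\tilde D_{\fb_1}$ and $\tilde D_{\fb_2},$ which do pass through $p,$ are interior p-submanifolds in simultaneous diagonal form: near $p$ their intersection is cut out by equations in the interior variables alone, so every point of it is a limit of points of it having all boundary coordinates positive, i.e.\ the intersection is locally the closure of its own intersection with the interior of the manifold. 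On the interior, canonically identified with the interior of $X^n,$ one has $D_{\fb_1}\cap D_{\fb_2}=D_{\fb_1\Cup\fb_2}$ by \eqref{ScatProd.82}; since $\tilde D_{\fb_1\Cup\fb_2}$ is closed and contains this interior set, taking closures gives $\tilde D_{\fb_1}\cap\tilde D_{\fb_2}\subset\tilde D_{\fb_1\Cup\fb_2},$ with no information about $\tilde D_{\fb_1\Cup\fb_2}$ near $p$ needed until after the fact. Intersecting with $\tilde A$ and invoking \eqref{ScatProd.62} then yields \eqref{ScatProd.84}. This route is arguably more self-contained than the paper's (no reordering machinery), at the price of leaning directly on the coordinate description behind Proposition~\ref{ScatProd.56}.
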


\begin{proof} This follows from the fact that we can identify $D_{\mathfrak{b}_1}$
in $[X^n;\mathfrak{b}_1],$ then blow up the elements of
  $\mathfrak{b}_1\Cup \mathfrak{b}_2$ and then $\mathfrak{b}_2.$ There is
an intersection order of $\mathcal{C}$ in which these are the first
blow-ups and are in this order; it then follows that \eqref{ScatProd.84}
holds in general. 
 \end{proof}

\begin{definition}\label{ScatProd.85} The codimension of a transversal
  collection $\mathfrak{b}\subset\cB_{(2)}$ is the sum of the
  codimensions of its elements, so is the codimension of their
  intersection. A size-order on such transversal collections is an order in
  which this total codimension is weakly decreasing. 
\end{definition}

In the second stage of the construction of the scattering n-fold product we
need to blow up all of the $H_{A,\mathfrak{b}}.$ Since this has to be done
step by step we consider a closure condition under intersection on a
collection of the submanifolds which is enough to allow them all to be
blown up unambiguously.

\begin{definition}\label{ScatProd.87} A collection
  $\cG\subset\{H_{A,\mathfrak{b}}\}$ in $[X^n;\mathcal{C}],$ so by
  assumption $H_{A,\mathfrak{b}}\in\cG$ implies
  $\mathfrak{b}\subset\mathcal{C},$ is \emph{intersection-closed} if 
\begin{equation}
H_{A_i,\mathfrak{b}_i}\in\cG,\ i=1,2\Longrightarrow
H_{A,\mathfrak{b}}\in\cG,\ A=A_1\cap
A_2,\ \mathfrak{b}=\mathfrak{b}_1\Cup\mathfrak{b}_2.
\label{ScatProd.88}\end{equation}
A \emph{chain-order} on $\cG$ is an order in which each $\mathfrak{b}$
which occurs does so only in an uninterrupted interval with the codimension of
$\mathfrak{b}$ weakly decreasing overall and when $\mathfrak{b}$ is
unchanging, the codimension of $A$ is weakly decreasing.
\end{definition}
\noindent Thus this is a `lexicographic order' in which $\mathfrak{b}$ is
the first `letter' and $A$ the second.

\begin{proposition}\label{ScatProd.86} An intersection-closed
  collection, $\cG,$ of boundary diagonals in $[X^n;\mathcal{C}]$ can be
  blown up in any chain-order (so under such blow ups all later elements lift to
  p-submanifolds) and the resulting manifold is independent of the
  chain-order chosen.
\end{proposition}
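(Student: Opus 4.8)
The plan is to develop this as the boundary-diagonal counterpart of the size-order and intersection-order theory for boundary faces: the role played there by Lemma~\ref{ScatProd.34} is taken here by the d-collection stability statement of Proposition~\ref{24.5.2008.137}, closure under non-transversal intersection is replaced by the intersection-closed condition \eqref{ScatProd.88}, and the intersection calculus is supplied by Lemma~\ref{ScatProd.84}. The first step is to observe that an intersection-closed $\cG$, viewed as a family of p-submanifolds of $[X^n;\mathcal{C}]$ together with the boundary hulls $A$ that carry them, is a d-collection satisfying the closure condition \eqref{24.5.2008.139}. Indeed, the index-set meaning of $H_{A_1,\fb_1}\cap H_{A_2,\fb_2}=H_{A_1\cap A_2,\fb_1\Cup\fb_2}$ is precisely that, for a pair whose interior defining index sets are neither nested nor disjoint, the element $F$ demanded by \eqref{24.5.2008.139} is present in $\cG$, carries the interior index set of the intersection, and has boundary hull $A_1\cap A_2\subset B(E)\dotplus B(E')$.

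For well-definedness I would argue by induction, in parallel with Lemma~\ref{ScatProd.41}. The decisive point is that whenever two centres are \ncnt, their transversal-union intersection $H_{A_1\cap A_2,\fb_1\Cup\fb_2}$ produced by \eqref{ScatProd.82} is blown up first: since $\fb_1\Cup\fb_2$ is the \emph{join} of the two partitions defining $D_{\fb_1}$ and $D_{\fb_2}$, the associated diagonal $D_{\fb_1\Cup\fb_2}=D_{\fb_1}\cap D_{\fb_2}$ is strictly smaller than each, so this intersection has strictly larger interior codimension and is therefore a predecessor. Consequently each centre blown up is, at the moment it is reached, of maximal interior codimension among those remaining (the hulls $A$ falling under the $\cE_{\bo}$ clause of Proposition~\ref{24.5.2008.137}), so Proposition~\ref{24.5.2008.137} applies: the centre is a p-submanifold and the residual family again lifts to a d-collection obeying \eqref{24.5.2008.139}. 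Iterating shows every later element lifts to a p-submanifold. Exactly as in Lemma~\ref{ScatProd.41}, two centres of equal interior codimension commute, since an \ncnt\ such pair has, by Lemma~\ref{ScatProd.84} and \eqref{ScatProd.88}, its intersection already removed and hence lifts to be disjoint.

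Independence of the chain-order I would then obtain by a defect argument modelled on Proposition~\ref{ScatProd.1}. I would assign to a chain-order a defect recording its departure from being interior-codimension monotone inside each level of fixed $\codim(\fb)$ (Definition~\ref{ScatProd.85}), show that the earliest adjacent inverted pair may be transposed, that the transposition again meets the axioms of Definition~\ref{ScatProd.87} and strictly lowers the defect, and that the two centres genuinely commute across it. This last commutation is where I expect the main obstacle to lie: I must exclude that the transposed lifts become \ncnt\ in the intermediate blow-up. Here Lemma~\ref{ScatProd.84} together with a persistence argument of the type in Proposition~\ref{ScatProd.39} does the work, for any \ncnt\ pair meets in $H_{A_1\cap A_2,\fb_1\Cup\fb_2}$, whose join structure again forces strictly larger interior codimension and hence places it strictly earlier; once it is blown up the two lifts are disjoint and remain so, while a comparable pair can cease to be comparable only against a centre containing the smaller member, which interior-codimension monotonicity among the predecessors forbids. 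Driving the defect to zero reaches the standard order of the previous paragraph, and Proposition~\ref{24.5.2008.137} then delivers the canonical diffeomorphism extending the identity of interiors.
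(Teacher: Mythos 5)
Your proposal follows the paper's skeleton---realize $\cG$ as a d-collection, verify the closure condition \eqref{24.5.2008.139} so that Proposition~\ref{24.5.2008.137} applies inductively, then argue order-independence---but the two steps you lean on are not correct as stated, and they are exactly the points where the real work lies. The first concerns the intersection calculus. You justify \eqref{24.5.2008.139} by the identity $H_{A_1,\fb_1}\cap H_{A_2,\fb_2}=H_{A_1\cap A_2,\fb_1\Cup\fb_2}$, attributed to \eqref{ScatProd.84}. That equation (Lemma~\ref{ScatProd.83}) is proved only for a \emph{common} hull $A_1=A_2$; for distinct hulls the equality is false in $[X^n;\cC]$ in general, since if $A_1$ and $A_2$ are \ncnt\ with $A_1\cap A_2\in\cC$ their lifts to $[X^n;\cC]$ are disjoint (Proposition~\ref{ScatProd.39}), so the left side is empty while $H_{A_1\cap A_2,\fb_1\Cup\fb_2}$ is a non-empty interior p-submanifold of the lift of $A_1\cap A_2$. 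What is true, and all that the closure condition needs, is the inclusion $H_{A_1,\fb_1}\cap H_{A_2,\fb_2}\subset H_{A_1\cap A_2,\fb_1\Cup\fb_2}$; but establishing that inclusion after all the boundary blow-ups creating $[X^n;\cC]$ is not formal: it is \eqref{ScatProd.62} combined with Lemma~\ref{24.5.2008.150} (the statement $\tilde B_1\cap\tilde B_2\subset\widetilde{B_1\cap B_2}$, which takes a long proof). This is the paper's key technical input here, and your proposal never invokes it or any substitute.

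The second, more serious, gap is the repeated inference that for an \ncnt\ pair the element $H_{A_1\cap A_2,\fb_1\Cup\fb_2}$ ``has strictly larger interior codimension and is therefore a predecessor'' in any chain-order. This conflates two different quantities: a chain-order (Definition~\ref{ScatProd.87}) is keyed to the codimension of $\fb$ in the sense of Definition~\ref{ScatProd.85}, i.e.\ $\codim(\cap\fb)$, and forming $\fb_1\Cup\fb_2$ strictly increases interior codimension while possibly leaving $\codim(\cap\fb)$ unchanged. Concretely, for $n=4$ take $\fb_1=\{B_{12},B_{34}\}$ and $\fb_2=\{B_{13},B_{24}\}$, so that $\fb_1\Cup\fb_2=\{B_{1234}\}$: all three collections have codimension $4$ and common hull $A=B_{1234}=(\pa X)^4$, so the order $H_{A,\fb_1},\ H_{A,\fb_2},\ H_{A,\fb_1\Cup\fb_2}$ is a legitimate chain-order in which the intersection element comes \emph{last}. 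For such an order your induction breaks down: when $H_{A,\fb_1}$ is blown up it is not of maximal interior codimension, it still meets $H_{A,\fb_2}$ with index sets that are neither nested nor disjoint, and nothing already blown up separates them; indeed, after blowing up $H_{A,\fb_1}$ the lifts of $H_{A,\fb_2}$ and of $H_{A,\fb_1\Cup\fb_2}$, which were comparable, become \ncnt---locally the configuration of two coordinate axes in $\bbR^3$, the standard example of non-commuting blow-ups. So the transpositions your defect argument needs at this point are precisely the commutations that remain unproven. The paper does not attempt them this way: its proof commutes only adjacent elements with the \emph{same} $\fb$ (where chain-closure guarantees $H_{A_1\cap A_2,\fb}$ genuinely does come earlier, by the within-block ordering in $\codim A$), and it explicitly defers the reordering among distinct $\fb$'s to the results of \S\ref{Reorder} (Proposition~\ref{ScatProd.25} and its companions). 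What actually controls admissibility is the Introduction's partial order \eqref{ScatProd.174}, in which $D_{\fa}\subset D_{\fb}$ forces $\fa$ to come first; your example shows that tracking this inclusion relation, rather than either codimension, is what any correct ordering argument must do.
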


\begin{proof} It follows from Proposition~\ref{ScatProd.56} that the
  elements of $\cG$ form a d-collection of p-submanifolds in
  $[X^n;\mathcal{C}].$ To apply Proposition~\ref{24.5.2008.137} we need to
  check the closure condition \eqref{24.5.2008.139}. Consider two elements
  $H_{A_i,\mathfrak{b}_i}$ of $\mathcal{G};$ by assumption
  $H_{A,\mathfrak{b}}$ in \eqref{ScatProd.88} is also an element of
  $\mathcal{G}.$ Applying Lemma~{24.5.2008.150} to $A_1$ and $A_2$ shows
  that 
\begin{equation}
A_1\cap A_2\subset A\Min[X^n;\mathcal{C}].
\label{24.5.2008.152}\end{equation}
However from \eqref{ScatProd.62} we conclude that 
\begin{equation}
H_{A_1,\mathfrak{b}_1}\cap H_{A_2,\mathfrak{b}_2}\subset
H_{A,\mathfrak{b}}\Min [X^n;\mathcal{C}].
\label{24.5.2008.153}\end{equation}
Since the index sets as a d-collection are just the $\mathfrak{b}$'s this
shows that the closure condition \eqref{24.5.2008.139} for $\mathcal{G}$
follows from \eqref{ScatProd.88}.

Thus Proposition~\ref{24.5.2008.137} shows that the elements of
$\mathcal{G}$ can be blown up in any order so when blown up each element
has maximal interior codimension or is a boundary face. Clearly a
chain-order as defined above has this property.

To complete the proof of the Proposition it remains to show that different
chain-orders lead to the same blown up manifold. To see this means first
showing that two neighbouring elements $H_{A_i,\mathfrak{b}}$ with the same
$\mathfrak{b}$ and with $A_i$ of the same original codimension in $X^n$ can
be interchanged. By \eqref{ScatProd.88},
$H_{A_1\cap A_2,\mathfrak{b}}\in\mathcal{G}$ must already have been blown up. As
follows from \eqref{24.5.2008.153}, before it is blown up this contains the
intersection of the $H_{A_i,\mathfrak{b}}$ but cannot contain either of
them. It follows as in the case of boundary faces that after this boundary
diagonal has been blown up these two are disjoint and hence can be
interchanged. It is also necessary see that the ordering amongst the
$\mathfrak{b}$ can be changed, subject to the decrease of codimension of
$\mathfrak{b}.$ That this is possible follows from the next result which
completes the proof of the Proposition.
\end{proof}

\begin{definition}\label{24.5.2008.120} 
Let $\mathcal{H}_{*,\mathcal{C}}$ be the collection of the
$H_{A,\mathfrak{b}}\subset[X^n;\mathcal{C}]$ where
$\mathfrak{b}\subset\mathcal{C}$ is a transversal subcollection of boundary
faces and $A\subset\cap\mathfrak{b}.$ A collection
$\mathcal{G}\subset\mathcal{H}_{*,\mathcal{C}}$ is \emph{face--closed} if
$H_{A,\mathfrak{b}_1\Cup  \mathfrak{b}_2}\in\mathcal{G}$ whenever
$H_{A,\mathfrak{b}_i}\in\mathcal{G}$ for $i=1,2.$ Thus each of the subcollections
which have a given boundary face as boundary hull is closed under intersection.

Such a collection is \emph{chain--closed} if
$H_{A,\mathfrak{b}}\in\mathcal{G}\subset\cH_{*,\mathcal{C}}$ and $A'\subset 
A\subset\cap\mathfrak{b},$ implies $H_{A',\mathfrak{b}}\in\mathcal{G}.$

A collection $\mathcal{G}\subset\mathcal{H}_{*,\mathcal{C}}$ is \emph{fc-closed} if
it is both face-closed and chain-closed.
\end{definition}

Now, for a collection $\mathcal{G}\subset\mathcal{H}_{*,\mathcal{C}}$ let
$\beta(\mathcal{G})$ be the collection of transversal boundary faces which
  occurs, that is, $\mathfrak{b}\in\beta (\mathcal{G})$ if and only if
  $H_{A,\mathfrak{b}}\in \mathcal{G}$ for some $A.$

\begin{lemma}\label{24.5.2008.115} If $\mathcal{C}\subset\cB_{(2)}$ is
  closed under non-transversal intersection then
  $\mathcal{G}\subset\mathcal{H}_{*,\mathcal{B}}$ is fc-closed if and only if
it is intersection-closed in the sense of \eqref{ScatProd.88} and
\begin{equation}
\begin{aligned}
\mathfrak{b}\in&\beta(\mathcal{G})\Longrightarrow\\
&\{A\in\cB_{(2)};H_{A,\mathfrak{b}}\in\mathcal{G}\}\text{ is
  closed under passage to boundary faces.}
\end{aligned}
\label{ScatProd.91}\end{equation}
\end{lemma}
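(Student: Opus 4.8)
The plan is to unwind the definitions and observe at the outset that condition \eqref{ScatProd.91} is nothing other than the chain-closure condition of Definition~\ref{24.5.2008.120}: for a fixed $\mathfrak{b}\in\beta(\mathcal{G})$, saying that $\{A\in\cB_{(2)};H_{A,\mathfrak{b}}\in\mathcal{G}\}$ is closed under passage to boundary faces is exactly the requirement that $H_{A,\mathfrak{b}}\in\mathcal{G}$ together with $A'\subset A\subset\cap\mathfrak{b}$ force $H_{A',\mathfrak{b}}\in\mathcal{G}$ (and for $\mathfrak{b}\notin\beta(\mathcal{G})$ the set is empty, hence trivially closed). Writing fc-closed $=$ face-closed $+$ chain-closed, and noting that the right-hand side of the Lemma is intersection-closed $+$ chain-closed, the asserted equivalence thus reduces to proving that, \emph{in the presence of chain-closure}, face-closure and intersection-closure \eqref{ScatProd.88} are equivalent. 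I would record this reduction first and then dispatch the two remaining implications.

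For the implication intersection-closed $\Rightarrow$ face-closed I would simply specialize \eqref{ScatProd.88} to $A_1=A_2=A$: if $H_{A,\mathfrak{b}_1}$ and $H_{A,\mathfrak{b}_2}$ both lie in $\mathcal{G}$, then taking $A=A\cap A$ yields $H_{A,\mathfrak{b}_1\Cup\mathfrak{b}_2}\in\mathcal{G}$, which is precisely face-closure. This direction needs neither chain-closure nor any geometry.

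The substantive direction is face-closed $+$ chain-closed $\Rightarrow$ intersection-closed. Given $H_{A_1,\mathfrak{b}_1},H_{A_2,\mathfrak{b}_2}\in\mathcal{G}$ with $A_1\cap A_2\neq\emptyset$ (the empty case rendering \eqref{ScatProd.88} vacuous), set $A=A_1\cap A_2$, which is a boundary face of $M$ contained in both $A_1$ and $A_2$ and hence a boundary face of each. Since $A\subset A_1\subset\cap\mathfrak{b}_1$ and $A\subset A_2\subset\cap\mathfrak{b}_2$, two applications of chain-closure descend the given elements to $H_{A,\mathfrak{b}_1},H_{A,\mathfrak{b}_2}\in\mathcal{G}$; face-closure then combines them into $H_{A,\mathfrak{b}_1\Cup\mathfrak{b}_2}\in\mathcal{G}$, which is the conclusion of \eqref{ScatProd.88}.

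The one place that needs care — and the main, though modest, obstacle — is verifying that the combined object $H_{A,\mathfrak{b}_1\Cup\mathfrak{b}_2}$ is genuinely well-defined and lies in $\mathcal{H}_{*,\mathcal{C}}$, i.e.\ that $\mathfrak{b}_1\Cup\mathfrak{b}_2\subset\mathcal{C}$ is transversal and that $A\subset\cap(\mathfrak{b}_1\Cup\mathfrak{b}_2)$. The first holds because $\mathcal{C}$ is closed under non-transversal intersection, so the elements of $\mathfrak{b}_1\Cup\mathfrak{b}_2$, being non-transversal intersections of elements of $\mathfrak{b}_1\cup\mathfrak{b}_2$, all lie in $\mathcal{C}$. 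The second is exactly the containment supplied by Lemma~\ref{ScatProd.83}, whose hypothesis $A\subset(\cap\mathfrak{b}_1)\cap(\cap\mathfrak{b}_2)$ we have just established. With these two points in place the descend-then-combine argument goes through, and together with the easy converse this yields the claimed equivalence.
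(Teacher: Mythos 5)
Your proposal is correct and follows essentially the same route as the paper's proof: identify \eqref{ScatProd.91} with chain-closure, get face-closure from \eqref{ScatProd.88} by taking $A_1=A_2$, and for the converse descend both elements to $A_1\cap A_2$ by chain-closure and then combine by face-closure. Your additional verification that $H_{A,\mathfrak{b}_1\Cup\mathfrak{b}_2}$ is well-defined (via closure of $\mathcal{C}$ under non-transversal intersection and Lemma~\ref{ScatProd.83}) is a point the paper leaves implicit, but it does not change the argument.
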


\begin{proof} Indeed, \eqref{ScatProd.91} is just a restatement of the
  chain-closure condition. The intersection-closure property
  \eqref{ScatProd.88} implies the face-closure condition by
  applying it with $A_1=A_2.$ Conversely \eqref{ScatProd.88} must always
  hold for an fc-closed collection in the sense defined above since
  $H_{A_i,\mathfrak{b}_i}\in\mathcal{G}$ for $i=1,2$ implies $H_{A_1\cap
    A_2,\mathfrak{b}_i}\in\mathcal{G}$ for $i=1,2$ by chain-closure and
  then $H_{A_1\cap A_2,\mathfrak{b}_1\Cup\mathfrak{b}_2}\in\mathcal{G}$ by
  face-closure.
\end{proof}

\begin{remark}\label{24.5.2008.121} Thus Proposition~\ref{ScatProd.86}
  applies to an fc-closed collection of boundary diagonals. It also follows
  that the part of an fc-closed collection
  $\mathcal{G}\subset\mathcal{H}_{*,\mathcal{C}}$ which occurs before any
  given point in a chain-order is also fc-closed 
  and chain-ordered, since the elements the existence of which is required
  by \eqref{ScatProd.60} and \eqref{ScatProd.91} must occur earlier in the
  chain-order. 
\end{remark}

\section{Scattering configuration spaces}\label{Scatcon}

In this section we complete the definition of the scattering
configuration space $X^n_{\scat}.$  This is defined by blowing up the
boundary d-collection of boundary multi-diagonals in $X^n_{\bo}.$

\begin{definition}\label{24.5.2008.163} The n-fold scattering configuration
  space (or stretched product) of a compact manifold with boundary is
  defined to be
\begin{equation}
X_{\scat}^n=[X^n_{\bo};H_{*,\cB_{\bo}}]
\label{24.5.2008.164}\end{equation}
where the boundary diagonals are to be blown up in a chain-order.
\end{definition}

That this manifold exists and is independent of choice of the chain-order
chosen follows from the fact that Proposition~\ref{ScatProd.86} certainly
applies to the collection of all boundary diagonals when $\cC=\cB_{\bo}.$
Moreover the same argument applies to show the symmetry of the resulting object.

\begin{proposition}\label{24.5.2008.165} The permutation group $\Sigma _n$
  acts on $X^n_{\scat}$ as the lifts of the factor exchange diffeomorphisms
  of $X^n.$
\end{proposition}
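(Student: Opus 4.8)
The proof rests on a general principle: a diffeomorphism of a manifold with corners that permutes a collection of admissible centres lifts canonically to the corresponding iterated blow-up, because that blow-up is independent of the order. The plan is to apply this principle twice, once for each stage of the construction \eqref{24.5.2008.164}, and then to check that the resulting lifts compose correctly so as to define an action.

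First I would record the lifting of the first stage. Let $\sigma$ be the factor-exchange diffeomorphism of $X^n$ induced by a permutation of $\{1,\dots,n\}$. Since $\sigma$ carries boundary faces to boundary faces and preserves codimension, it permutes $\cB_{\bo}$ among itself: for connected $\partial X$ this is immediate because $\cB_{\bo}=\cM_{(2)}(X^n)$, and for disconnected boundary it holds because the symmetric condition defining $\cB_{\bo}$ is permutation-invariant. Hence blowing up $\cB_{\bo}$ in an intersection-order $o$ and blowing up $\sigma(\cB_{\bo})=\cB_{\bo}$ in the order $o\circ\sigma^{-1}$ give, by Proposition~\ref{ScatProd.1}, canonically diffeomorphic manifolds. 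Composing this canonical identification with $\sigma$ on the interior of $X^n$ and extending by continuity yields a diffeomorphism $\tilde\sigma$ of $X^n_{\bo}$ covering $\sigma$.

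Next I would treat the second stage. The map $\tilde\sigma$ sends the lift of $D_\fb$ to the lift of $D_{\sigma\fb}$ and the boundary face $A$ to $\sigma A$, so it carries each boundary diagonal $H_{A,\fb}$ to $H_{\sigma A,\sigma\fb}$; here $\sigma\fb=\{\sigma B:B\in\fb\}$ is again transversal. Because $\sigma$ preserves the codimension of boundary faces and the total codimension of transversal collections, it maps the full collection $H_{*,\cB_{\bo}}$ bijectively to itself and carries any chain-order to a chain-order. By Proposition~\ref{ScatProd.86} the blow-up $X^n_{\scat}=[X^n_{\bo};H_{*,\cB_{\bo}}]$ is independent of the chain-order, so exactly as before $\tilde\sigma$ extends by continuity to a diffeomorphism of $X^n_{\scat}$ covering $\sigma$.

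Finally, the homomorphism property follows from uniqueness of the continuous extension from the dense interior. For permutations $\sigma,\tau$, both the lift of $\sigma\tau$ and the composite of the lifts of $\sigma$ and of $\tau$ are diffeomorphisms of $X^n_{\scat}$ that restrict to $\sigma\tau$ on the interior, which is canonically identified with the interior of $X^n$; being continuous and agreeing on a dense set, they coincide. The identity lifts to the identity for the same reason, giving a homomorphism $\Sigma_n\to\operatorname{Diff}(X^n_{\scat})$. The only point needing care, and where the symmetry of the construction is genuinely invoked, is that $\sigma$ respects the combinatorial data governing admissible orders at each stage—the codimension for $\cB_{\bo}$ and the lexicographic pair $(\fb,A)$ for the boundary diagonals; but since a factor-exchange map acts purely by relabelling indices, all these quantities are manifestly preserved, so this step presents no real obstacle.
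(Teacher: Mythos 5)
Your proof is correct and takes essentially the same route as the paper: the paper's own proof is the single sentence that the result ``just amounts to carrying out the blow up in \eqref{24.5.2008.164} in a different chain-order,'' i.e.\ exactly the order-independence results (Proposition~\ref{ScatProd.1} for $\cB_{\bo}$ and Proposition~\ref{ScatProd.86} for the boundary diagonals) that you invoke at each stage. Your write-up merely supplies the details the paper leaves implicit, including the check that factor exchanges carry chain-orders to chain-orders and the density argument for the homomorphism property.
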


\begin{proof} This just amounts to carrying out the blow up in
  \eqref{24.5.2008.164} in a different chain-order.
\end{proof}

This means that to construct all the maps $X^n_{\scat}\longrightarrow
X^m_{\scat},$ $m<n,$ covering the projections off various factors, it
suffices to consider the case $m=n-1$ with the last factor projected off
and then apply permutations and compose. This is discussed in detail in
\S\ref{Sect-Ssp} where the arguments depend on more complicated commutation
results which we proceed to discuss.

\section{Reordering blow-ups}\label{Reorder}

From Proposition~\ref{ScatProd.86} it follows that the blow up in
$[X^n;\mathcal{C}]$ of an fc-closed subcollection
$\mathcal{G}\subset\mathcal{H}_{*,\mathcal{C}},$ where
$\mathcal{C}\subset\mathcal{B}_{\bo}$ is closed under non-transversal
intersection, is iteratively defined with respect to any chain-order and
the final result is a manifold with corners which is independent of the
chain-order. Thus, under these conditions, $[X^n;\mathcal{C};\mathcal{G}]$
is well-defined. In this section we give three results which relate these
manifolds under blow-down.

\begin{remark}\label{ScatProd.166} In the blow up of an fc-closed
  collection $\cG$ of boundary diagonals the order is such that all lifts
  are closures of inverse images of complements with respect to the
  centre. It follows that the intersection properties of any two elements,
  meaning whether they are transversal, comparable or \ncnt, remain
  unchanged unless their (original and hence persisting) intersection is
  blown up in which case they become disjoint.
\end{remark}

\begin{proposition}\label{ScatProd.25} Let
$\mathcal{C}\subset\cB_{(2)}$ be closed under non-transversal
intersection and suppose $\mathcal{G}\subset\mathcal{H}_{*,\mathcal{C}}$ is an
fc-closed subcollection of boundary diagonals such that
\begin{equation}
H_{A,\mathfrak{a}}\in\mathcal{G}\Longrightarrow A\in\mathcal{C}
\label{ScatProd.27}\end{equation}
and consider a particular element $H_{C,\mathfrak{c}}\in\mathcal{G}$ with
the additional properties 
\begin{equation}
\begin{gathered}
\{H_{C,\mathfrak{b}}\in\mathcal{G};\mathfrak{b}\not=\mathfrak{c}\}
\text{ is closed under intersections}\Mand\\
\mathcal{C}\ni A\supsetneq C\Longrightarrow H_{A,\mathfrak{c}}\notin\mathcal{G}
\end{gathered}
\label{ScatProd.60}\end{equation}
then there is a blow-down map
\begin{equation}
[X^n;\mathcal{C};\mathcal{G}]
\longrightarrow
[X^n;\mathcal{C};\mathcal{G}'],\
\mathcal{G}'=\mathcal{G}\setminus\{H_{C,\mathfrak{c}}\}.
\label{9.1.2008.4}\end{equation}
\end{proposition}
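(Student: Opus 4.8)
The plan is to exhibit $H_{C,\mathfrak{c}}$ as the \emph{final} blow-up in a chain-order realising $[X^n;\mathcal{C};\mathcal{G}]$, so that \eqref{9.1.2008.4} is obtained simply by omitting it. First I would observe that $\mathcal{G}'$ is again fc-closed, which is precisely what the two hypotheses in \eqref{ScatProd.60} provide. Face-closure of $\mathcal{G}'$ can fail only through a pair $H_{C,\mathfrak{b}_1},H_{C,\mathfrak{b}_2}\in\mathcal{G}'$ with $\mathfrak{b}_1\Cup\mathfrak{b}_2=\mathfrak{c}$, and this is ruled out by the first condition; chain-closure can fail only if some $H_{A,\mathfrak{c}}\in\mathcal{G}'$ with $C\subsetneq A$ demands $H_{C,\mathfrak{c}}$, and this is ruled out by the second. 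Hence $[X^n;\mathcal{C};\mathcal{G}']$ is well-defined by Proposition~\ref{ScatProd.86}.

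Next I would fix a chain-order on $\mathcal{G}$ in which $H_{C,\mathfrak{c}}$ is the last element of its $\mathfrak{c}$-block; this is legitimate because the second condition in \eqref{ScatProd.60} makes $C$ maximal---hence of minimal codimension---among the faces occurring with $\mathfrak{c}$, so it falls last under the weakly decreasing codimension of $A$. Every element blown up after $H_{C,\mathfrak{c}}$ is then some $H_{A',\mathfrak{b}'}$ with $\codim(\mathfrak{b}')\le\codim(\mathfrak{c})$ and $\mathfrak{b}'\neq\mathfrak{c}$, and the heart of the argument is to show that $H_{C,\mathfrak{c}}$ commutes with each such element, so that it may be transposed to the very end without altering the manifold. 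By Remark~\ref{ScatProd.166} the two meet transversally, comparably, or are \ncnt\ exactly as in $[X^n;\mathcal{C}]$, unless their intersection has already been blown up, in which case they are disjoint; in the transversal and comparable cases they commute outright.

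The substantive case is when $H_{C,\mathfrak{c}}$ and a later $H_{A',\mathfrak{b}'}$ are \ncnt. Here \eqref{ScatProd.62} together with Lemma~\ref{24.5.2008.150} identify their intersection as (a subset of) $H_{C\cap A',\,\mathfrak{c}\Cup\mathfrak{b}'}$, and intersection-closure \eqref{ScatProd.88} of the fc-closed $\mathcal{G}$ places this element in $\mathcal{G}$. A codimension count then shows it is blown up strictly earlier: either $\mathfrak{c}$ and $\mathfrak{b}'$ are not comparable, so $\codim(\mathfrak{c}\Cup\mathfrak{b}')>\codim(\mathfrak{c})$ and the intersection lies in an earlier $\mathfrak{b}$-block; or they are comparable, which---since $\mathfrak{b}'$ sits in a block no earlier than $\mathfrak{c}$---forces $\mathfrak{c}\Cup\mathfrak{b}'=\mathfrak{c}$, while the \ncnt-ness of the $H$'s forces $C\cap A'\subsetneq C$, so that chain-closure places $H_{C\cap A',\mathfrak{c}}$ before $H_{C,\mathfrak{c}}$ within the $\mathfrak{c}$-block. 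In every case the obstructing intersection is gone by the time of the transposition, the two are disjoint, and they commute. Moving $H_{C,\mathfrak{c}}$ past all its successors identifies $[X^n;\mathcal{C};\mathcal{G}]$ with $[X^n;\mathcal{C};\mathcal{G}';H_{C,\mathfrak{c}}]$, whose terminal blow-down is the map \eqref{9.1.2008.4}.

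I expect the main obstacle to be the bookkeeping in the \ncnt\ case: one must check uniformly, across the possible interactions of the faces $C,A'$ with the diagonal families $\mathfrak{c},\mathfrak{b}'$, that the resolving intersection always carries strictly larger codimension---in $\mathfrak{b}$, or failing that in the boundary face---so that it is scheduled before $H_{C,\mathfrak{c}}$ in the chain-order and the transposition is admissible.
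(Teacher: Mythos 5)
You follow the same route as the paper: first check that $\mathcal{G}'$ is fc-closed, then commute $H_{C,\fc}$ step by step to the end of a chain-order, using Remark~\ref{ScatProd.166} to reduce each transposition to the two centres being transversal, disjoint or comparable at the moment they are interchanged. The fc-closedness argument is fine. The genuine gap is the codimension count on which the whole \ncnt\ case rests, and which you yourself identify as the crux: it is \emph{not} true that if $\fc$ and $\fb'$ are not comparable then $\codim(\fc\Cup\fb')>\codim(\fc)$. One only gets $\codim(\fc\Cup\fb')\geq\codim(\fc)$, with equality exactly when the total index set of $\fb'$ is contained in that of $\fc$. For a concrete failure take $n=4$ and, writing $B_{ij\dots}$ for the boundary face with a factor of $\pa X$ exactly in the listed slots, take $\mathcal{C}=\cB_{(2)}$, $C=B_{1234}$, $\fc=\{B_{12},B_{34}\}$, $\fb'=\{B_{13},B_{24}\}$, so that $\fc\Cup\fb'=\{B_{1234}\}$: all three collections have codimension $4$, as do all the faces involved, so even your fallback (strictly larger codimension of the boundary face) fails. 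The collection $\mathcal{G}=\{H_{C,\fc},H_{C,\fb'},H_{C,\fc\Cup\fb'}\}$ is fc-closed and satisfies \eqref{ScatProd.27} and both parts of \eqref{ScatProd.60}; since Definition~\ref{ScatProd.87} constrains blocks only through weakly decreasing codimension, $H_{C,\fc},\ H_{C,\fb'},\ H_{C,\fc\Cup\fb'}$ is a legitimate chain-order, with $H_{C,\fc}$ (trivially) last in its block. Your first transposition must then move $H_{C,\fc}$ past $H_{C,\fb'}$; these are \ncnt\ and their intersection $H_{C,\fc\Cup\fb'}$ has not yet been blown up, so the two blow-ups do not commute and the procedure halts. (The Proposition itself is true here: the chain-order putting $H_{C,\fc\Cup\fb'}$ first works.) A secondary slip: the second part of \eqref{ScatProd.60} does not make $C$ of minimal codimension among faces occurring with $\fc$, since faces $A$ with $H_{A,\fc}\in\mathcal{G}$ need not be comparable to $C$; so $H_{C,\fc}$ cannot always be put last in its block. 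That one is harmless, because such a later $H_{A,\fc}$ is separated from $H_{C,\fc}$ by $H_{A\cap C,\fc}$, whose face has \emph{strictly} larger codimension and hence genuinely precedes $H_{C,\fc}$ within the block.

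What your argument is missing is that, when codimensions tie, the position of the $(\fc\Cup\fb')$-block relative to the $\fc$- and $\fb'$-blocks is a matter of \emph{choice}, not a consequence of the chain-order axioms, and the proof must make that choice. This is how the paper proceeds: it proves the commutation identity \eqref{24.5.2008.135} relative to a chosen chain-order of $\mathcal{G}'$, and its assertion that the block of $\fa\Cup\fc$ precedes that of $\fc$ unless $\fc\Subset\fa$ is a property arranged by that choice, which is legitimate because Proposition~\ref{ScatProd.86} guarantees every chain-order yields the same manifold. The clean repair of your version is to refine the chain-order by a secondary key: order blocks by weakly decreasing codimension of $\fb$, and among blocks of equal codimension by weakly decreasing interior codimension of $D_{\fb}$ (coarser collections first). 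When $\codim(\fc\Cup\fb')=\codim(\fc)$ the union $\fc\Cup\fb'\neq\fc$ is a strict coarsening of $\fc$ on the same index set, hence has strictly larger interior codimension and lies in a strictly earlier block; together with the strict-inequality case this restores your claim that the resolving intersection is always blown up before the transposition, and the rest of your argument then goes through.
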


\begin{proof} First observe that the collection $\mathcal{G}'$ is
  fc-closed. The chain-closure condition is just \eqref{ScatProd.91} and
  this holds for $\mathfrak{b}\not=\mathfrak{c}$ since it holds for
  $\mathcal{G}.$ For $\mathfrak{b}=\mathfrak{c}$ the only danger is that
  the element in question is $H_{C,\mathfrak{c}}.$ However, the second
  assumption in \eqref{ScatProd.60} shows that $C$ cannot be a boundary
  face of $A$ with $H_{A,\mathfrak{c}}\in\mathcal{G}'.$ Similarly if
  $H_{A_i,\mathfrak{b}_i}\in\mathcal{G}'$ for $i=1,$ $2,$ the element
  required in \eqref{ScatProd.88}, which exists in $\mathcal{G}$ by
  hypothesis, is certainly in $\mathcal{G}'$ unless $A_1\cap A_2=C$ and
  $\mathfrak{c}=\mathfrak{b}_1\Cup\mathfrak{b}_2.$ Thus $C$ must then be a
  boundary face of both $A_1$ and $A_2.$ It cannot be that
  $\mathfrak{b}_i=\mathfrak{c}$ for either $i=1$ or $2$ since this would
  mean $A_i=C$ by the second part of \eqref{ScatProd.60} and hence the
  corresponding boundary diagonal would not be in $\mathcal{G}'.$ Thus
  $\mathfrak{b}_i\neq\mathfrak{c}$ and then \eqref{ScatProd.91} implies
  that $H_{C,\mathfrak{b}_1}$ and $H_{C,\mathfrak{b}_2}\in\mathcal{G}.$
  Their intersection being $H_{C,\mathfrak{c}}$ then violates the first
  condition in \eqref{ScatProd.60} so \eqref{ScatProd.88} does hold for
  $\mathcal{G}'.$

Thus the right side of \eqref{9.1.2008.4} is indeed defined. The body of
the proof below is devoted to showing that if $H_{A,\mathfrak{a}}$ is the
last element in $\mathcal{G}'$ with respect to a chosen chain-order and
$\widetilde{\mathcal{G}}$ is $\mathcal{G}'$ with this removed then  
\begin{equation}
[X^n;\mathcal{C};\widetilde{\mathcal{G}};H_{C,\mathfrak{c}};H_{A,\mathfrak{a}}]
=[X^n;\mathcal{C};\widetilde{\mathcal{G}};H_{A,\mathfrak{a}};H_{C,\mathfrak{c}}]
\label{24.5.2008.135}\end{equation}
including of course showing that both are defined.

To see that this identity proves the Proposition, observe, following
Remark~\ref{24.5.2008.121}, that the $\mathcal{G}_j$ obtained from
$\mathcal{G}$ by dropping the last $j$ terms for $1\le j\le j',$ where
$H_{C,\mathfrak{c}}$ is $j'+1$ terms from the end, are fc-closed and
chain-ordered. Moreover, the conditions of the Proposition hold for all
these $j.$ Then iterating \eqref{24.5.2008.135} shows that all the
manifolds obtained by blowing up $H_{C,\mathfrak{c}}$ at some later point
are all canonically diffeomorphic and hence there is a blow-down map
\eqref{9.1.2008.4}.

Thus we are reduced to showing \eqref{24.5.2008.135} under the assumptions
of the Proposition. To do so we consider that the intersection properties of
$H_{C,\mathfrak{c}}$ and $H_{A,\mathfrak{a}}$ (which is the last element of
$\mathcal{G})$ in the manifold $M_1=[X^n;\mathcal{C}]$ and subsequently in
the manifold $M_2$ which is $M_1$ with all elements preceding $H_{C,\fc}$
blown up.

All boundary faces $B$ forming the boundary-hull of elements
$H_{B,\mathfrak{b}}\in\mathcal{G}$ are, by assumption in
\eqref{ScatProd.27}, in $\mathcal{C}$ and hence have already been blown up
in $M_1.$ In particular if the intersection $A\cap C$ is \ncnt, then $A\cap
C$ has been blown up, the lifts of $A$ and $C$ are disjoint and hence so
are $H_{C,\mathfrak{c}}$ and $H_{A,\mathfrak{a}}.$ On the other hand, if
$A\pitchfork C$ in $X^n$ then $\cap\mathfrak{a}$ and $\cap\mathfrak{c},$
which contain them, must also be transversal so in fact $\mathfrak{a}$ and
$\mathfrak{c}$ must meet transversally as subsets of $\cB_{(2)}.$ It
follows that $H_{A,\mathfrak{a}}$ and $H_{C,\mathfrak{c}}$ are transversal
in $M_1.$ By Remark~\ref{ScatProd.166} they remain transversal in $M_2.$

So we need consider the intersection properties of $H_{C,\fc}$ and a later
$H_{A,\fa}$ with $A$ and $C$ comparable. The chain-order condition on
blow-ups, in which $H_{A,\mathfrak{a}}$ comes after $H_{C,\mathfrak{c}}$
implies that $\mathfrak{a}>\mathfrak{c}$ since the second part of
\eqref{ScatProd.60} implies that $H_{C,\fc}$ is the last element
corresponding to the diagonal $D_{\fc}.$ It follows that $\fa\Cup\fc<\fc$
unless $\fc\Subset\fa.$ In the first case $H_{A\cap C,\fa\Cup\fc}$ precedes
$H_{C,\fc}$ and so has already been blown up, and hence by
Remark~\ref{ScatProd.166}, $H_{C,\fc}$ and $H_{A,\fa}$ are disjoint at some
point before $H_{C,\fc}$ in the order of $\cG$ and so remain so in $M_2.$

Summarizing we see that 
\begin{equation}
\begin{gathered}
\text{Either }A\Mand C\text{ not comparable }\Mor\fa\not\Subset\fc
\Longrightarrow\\
H_{C,\fc}\Mand
H_{A,\fa}\text{ are transversal (or disjoint) in }M_2.
\end{gathered}
\label{ScatProd.170}\end{equation}

Thus it remains to consider the cases in which $A$ and $C$ are comparable
and the diagonals $D_{\fa}$ and $D_{\fc}$ are also comparable.

Suppose first that $A\subset C$ with strict inclusion, before blow up. Then
$H_{A,\mathfrak{c}}$ has been blown up earlier and 
\begin{equation}
A\subsetneq C, \fc\Subset\fa\Longrightarrow H_{C,\fc}\Mand H_{A,\fa}\text{
  are disjoint in }M_2
\label{ScatProd.169}\end{equation}
by Remark~\ref{ScatProd.166}. Next
\begin{equation}
A=C,\ \fc\Subset\fa
\Longrightarrow H_{C,\mathfrak{c}}\subset H_{C,\mathfrak{a}}\Min M_2
\label{ScatProd.167}\end{equation}
since this is true in $M_1$ and not affected by subsequent blow
ups. Finally 
\begin{equation}
C\subsetneq A,\ \fc\Subset\fa
\Longrightarrow H_{C,\fc}\subset H_{A,\fa}\Min M_2\Mand
H_{C,\fa}\in\mathcal{G}.
\label{ScatProd.168}\end{equation}

Combining \eqref{ScatProd.170}, \eqref{ScatProd.169}, \eqref{ScatProd.167}
and \eqref{ScatProd.168} we see that in $M_2,$ i\@.e\@.~immediately before
$H_{C,\fc}$ is to be blown up in $\cG,$ all the subsequent lifted boundary
diagonals are transversal (including disjoint) or closed else $H_{C,\fc}$
is contained in $H_{A,\fc}$ with $H_{C,\fa}$ coming earlier if $C\subsetneq
A.$ From this \eqref{24.5.2008.135} follows.

This completes the proof of \eqref{24.5.2008.135} and hence of the Proposition.
\end{proof}

Next we consider a result which allows a boundary face to be blown down
without blowing down the boundary diagonals which are (or rather were)
contained in it provided the diagonals do not `involve' the given boundary
face.

\begin{proposition}\label{ScatProd.26}  Let
  $B\in\mathcal{C}\subset\cB_{(2)}$ be such that both $\mathcal{C}$ and 
  $\mathcal{C}'=\mathcal{C}\setminus\{B\}$ are closed under non-transversal
  intersections and suppose
  $\mathcal{G}\subset\mathcal{H}_{*,\mathcal{C}'}$ is an fc-closed
  subcollection of boundary diagonals such that in addition
\begin{equation}
\begin{gathered}
B\text{ comes last in some intersection-order of }\mathcal{C}\Mand\\
H_{A,\mathfrak{b}}\in\mathcal{G}\text{ implies }A\in\mathcal{C}
\end{gathered}
\label{ScatProd.28}\end{equation}
then there is a blow-down map 
\begin{equation}
[X^n;\mathcal{C};\mathcal{G}]\longrightarrow
[X^n;\mathcal{C}';\mathcal{G}].
\label{9.1.2008.5}\end{equation}
\end{proposition}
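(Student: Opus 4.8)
The plan is to produce the blow-down map \eqref{9.1.2008.5} by realising $B$ as a single final centre of blow-up. Since $B$ comes last in some intersection-order of $\mathcal C$, Proposition~\ref{ScatProd.1} gives $[X^n;\mathcal C]=[X^n;\mathcal C';B]$, whence $[X^n;\mathcal C;\mathcal G]=[X^n;\mathcal C';B;\mathcal G]$. Writing $N=[X^n;\mathcal C']$ and $\tilde B$ for the lift of $B$ to $N$, everything reduces to the commutation identity
\begin{equation*}
[N;\tilde B;\mathcal G]=[N;\mathcal G;\tilde B],
\end{equation*}
since then \eqref{9.1.2008.5} is simply the map that forgets the final blow-up of $\tilde B$. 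I would prove this identity by transporting $\tilde B$ from the front to the back of a chain-order on $\mathcal G$ through successive interchanges, so the task is to control, in $N$, the position of $\tilde B$ relative to each single diagonal $H_{A,\mathfrak b}\in\mathcal G$.

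Because $H_{A,\mathfrak b}\in\mathcal G$ forces $A\in\mathcal C$, either $A=B$ or $A\in\mathcal C'$, and I would run through the position of $A$ against $B$ in $X^n$ using Lemma~\ref{ScatProd.34} and Proposition~\ref{ScatProd.39}. If $A=B$, then $H_{B,\mathfrak b}\subset\tilde B$ and the two are comparable. If $A\pitchfork B$, the lifts of $A$ and $B$ stay transversal, and since $H_{A,\mathfrak b}$ lies in $\tilde A$ and adds only interior conormals, $\tilde B\pitchfork H_{A,\mathfrak b}$. If $A,B$ are \ncnt, then $A\cap B\in\mathcal C'$ by closure of $\mathcal C'$, so its blow-up makes $\tilde A,\tilde B$ — and hence $\tilde B$ and $H_{A,\mathfrak b}\subset\tilde A$ — disjoint. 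If $A\subsetneq B$, then blowing up $A\in\mathcal C'$ makes $\tilde A$ and $\tilde B$ transversal (Proposition~\ref{ScatProd.39}(2), with $A$ itself the centre witnessing \eqref{24.5.2008.146}), so again $\tilde B\pitchfork H_{A,\mathfrak b}$.

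The one delicate case is $B\subsetneq A$: here $\tilde B$ lies inside the front face $\tilde A$ created by the blow-up of $A$, as does $H_{A,\mathfrak b}$, so the two share the conormal of $\tilde A$ and are \ncnt\ in $N$. The resolution is combinatorial. Their intersection is $\tilde B\cap H_{A,\mathfrak b}=H_{B,\mathfrak b}$, which by chain-closure of $\mathcal G$ (Definition~\ref{24.5.2008.120}) again lies in $\mathcal G$; sharing the same $\mathfrak b$ but having strictly larger codimension of its boundary hull, it precedes $H_{A,\mathfrak b}$ in every chain-order. Hence, by the time $\tilde B$ is to be interchanged with $H_{A,\mathfrak b}$, the centre $H_{B,\mathfrak b}$ has already been blown up; as the blow-up of the clean intersection of two p-submanifolds of a d-collection separates them, $\tilde B$ and $H_{A,\mathfrak b}$ are disjoint at that stage and the interchange is legitimate. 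In all other cases Remark~\ref{ScatProd.166} ensures that the transversal or comparable relation found above persists under the intervening blow-ups of $\mathcal G$, degenerating to disjointness exactly when the pertinent intersection (again in $\mathcal G$, by fc-closure) is blown up, while Proposition~\ref{24.5.2008.137}, applied to the d-collection $\{\tilde B\}\cup\mathcal G$ (a boundary face may always be adjoined to a d-collection), certifies that each intermediate lift of $\tilde B$ is a p-submanifold. Transporting $\tilde B$ to the end then yields the displayed identity.

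The principal obstacle is precisely the configuration $B\subsetneq A$: it is the only one producing an \ncnt\ pair $(\tilde B,H_{A,\mathfrak b})$ in $N$, and the legitimacy of the interchange rests on the interplay between the chain-order (which blows up the smaller-hull diagonal $H_{B,\mathfrak b}$ first) and chain-closure (which forces $H_{B,\mathfrak b}$ into $\mathcal G$ to begin with). Checking carefully that this separation is not disturbed by the remaining blow-ups of $\mathcal G$ is where the genuine work lies.
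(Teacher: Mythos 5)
Your proposal is correct and follows essentially the same route as the paper's proof: reduce via Proposition~\ref{ScatProd.1} to commuting the lift of $B$ through a chain-order on $\mathcal{G}$, then run the same case analysis on the position of the boundary hull $A$ relative to $B$, with the delicate case $B\subsetneq A$ resolved exactly as in the paper by chain-closure (which puts $H_{B,\mathfrak{b}}\in\mathcal{G}$ earlier in every chain-order, so its blow-up separates $\tilde B$ from $H_{A,\mathfrak{b}}$). The only difference is presentational \mhy\ you transport $\tilde B$ forward through successive interchanges while the paper inducts by peeling off the last element of $\mathcal{G}$ \mhy\ and these require the identical set of pairwise commutations.
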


\begin{proof} By hypothesis both sides of \eqref{9.1.2008.5} are well
  defined and at least at a formal level differ by the blow-up of $B.$
  Furthermore, the hypotheses continue to hold if the `tail' of
  $\mathcal{G}$ is cut off at any point, with respect to a given
  chain-order. Thus we only need to show that under the given hypotheses,
  blowing up (the lift of) $B$ in $[X^n;\mathcal{C}';\mathcal{G}]$ gives
  the same manifold as blowing it up before the last element of
  $\mathcal{G}$ since then we can use induction over the number of elements
  in $\mathcal{G}$ to prove \eqref{9.1.2008.5} in the general case.

So, let the last element of $\mathcal{G}$ be $H_{A,\mathfrak{a}}.$ Thus
$\mathfrak{a}\subset\mathcal{C}'$ is a transversal collection of boundary
faces and by the second hypothesis, either $A=B$ or else
$A\in\mathcal{C}'.$ Now, if $B$ and $A$ are \ncnt, then $A\cap B$ must
already have been blown up and as a result $B$ and $H_{A,\mathfrak{a}}$ are
disjoint in $[X^n;\mathcal{C}']$ and this must remain true at the point of
interest, so commutation is possible. If $A$ and $B$ are transversal, then
so are $H_{A,\mathfrak{a}}$ and $B$ for any $\mathfrak{a}$ so the same
conclusion follows.

Thus we need only consider the case that $A$ and $B$ are
comparable. Suppose first that $B\subset A$ is a strict inclusion. Then, by
the chain condition on $\mathcal{G},$ $H_{B,\mathfrak{a}}\in\mathcal{G}$
must already have been blown up. However, initially, this is the
intersection of $H_{A ,\mathfrak{a}}$ and $B$ and by
Remark~\ref{ScatProd.166} remains so until it is blown up. Thus the
manifolds $H_{A ,\mathfrak{a}}$ and $B$ must be disjoint at the point of
interest and commutation is trivial.

Thus we may suppose conversely that $B\supset A,$ at first strictly. Then
$A\in\mathcal{C}'$ has been blown up, and in doing so it becomes
transversal to $B$ and this implies that $B$ is transversal to
$H_{A,\mathfrak{a}}$ so this remains true at the end of the blow up of
$\mathcal{G}$ and commutation of the blow up of $B$ and
$H_{A,\mathfrak{a}}$ is again possible.

Thus only the case $B=A$ remains. Then $A=B\subset\cap\mathfrak{a}$ and
indeed $H_{A,\mathfrak{a}}=H_{B,\mathfrak{a}}\subset B.$ By
Remark~\ref{ScatProd.166}, this must still be true at the point of interest
since no $H_{B,\mathfrak{c}}$ containing $B_{B,\mathfrak{a}}$ can have been
blown up (since $\mathcal{G}$ is chain-ordered), so commutation is again
possible.

This proves, inductively, that there is a blow-down map \eqref{9.1.2008.5}.
\end{proof}

The third result in this section corresponds to blowing down boundary
diagonals with boundary hull which has not been blown up.

\begin{proposition}\label{ScatProd.29} Suppose
  $\mathcal{C}\subset\cB_{(2)}$ and $B\in\mathcal{C}$ is such that
  both $\mathcal{C}$ and $\mathcal{C}'=\mathcal{C}\setminus\{B\}$ are
  closed under passage to boundary faces (in $X^n)$ and
  $\mathcal{G}\subset\mathcal{H}_{*,\mathcal{C}'}$ is fc-closed and
 such that 
\begin{equation}
H_{A,\mathfrak{b}}\in\mathcal{G}\Longrightarrow A\in\mathcal{C}
\label{ScatProd.30}\end{equation}
then there is an iterated blow-down map 
\begin{equation}
[X^n;\mathcal{C}';\mathcal{G}]\longrightarrow [X^n;\mathcal{C}';\mathcal{G}'],\
\mathcal{G}'=\{H_{A,\mathfrak{b}}\in\mathcal{G};A\not=B\}.
\label{ScatProd.31}\end{equation}
\end{proposition}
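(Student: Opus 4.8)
The plan is to follow the template of Propositions~\ref{ScatProd.25} and~\ref{ScatProd.26}: first confirm that the target $[X^n;\mathcal{C}';\mathcal{G}']$ is defined, then realise the blow-down as an iterate of single commutations, each justified by Lemma~\ref{ScatProd.34} together with Remark~\ref{ScatProd.166}. Two structural consequences of the hypotheses drive everything. Since both $\mathcal{C}$ and $\mathcal{C}'=\mathcal{C}\setminus\{B\}$ are closed under passage to boundary faces, $B$ is \emph{maximal} in $\mathcal{C}$: no element of $\mathcal{C}$ properly contains $B,$ for such a containment would exhibit $B$ as a boundary face of an element of $\mathcal{C}'$ and force $B\in\mathcal{C}'.$ Moreover, for every $A\in\mathcal{C}'$ the face $A\cap B$ is a boundary face of $A,$ hence already lies in $\mathcal{C}'$ and has been blown up in $[X^n;\mathcal{C}'].$ This second fact is exactly what compensates for $B$ itself being absent from $\mathcal{C}'.$

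First I would check that $\mathcal{G}'=\{H_{A,\fb}\in\mathcal{G};A\ne B\}$ is fc-closed, so that the right-hand side of \eqref{ScatProd.31} makes sense. Face-closure is immediate, since the intersection rule of Lemma~\ref{ScatProd.83} combines only diagonals sharing a common hull and so cannot manufacture a hull-$B$ element out of two elements of $\mathcal{G}'.$ For chain-closure, if $H_{A,\fb}\in\mathcal{G}'$ and $A'\subset A,$ then $H_{A',\fb}\in\mathcal{G}$ by chain-closure of $\mathcal{G}$; here $A'=B$ is impossible, as it would give $B\subset A\in\mathcal{C}$ against the maximality of $B,$ so $H_{A',\fb}\in\mathcal{G}'.$ By Remark~\ref{24.5.2008.121} every initial segment of a chain-order on $\mathcal{G}$ is again fc-closed, which licenses the induction below.

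Next I would reduce to a single blow-down. Fix a chain-order in which the hull-$B$ diagonals occur as late as their $\fb$-codimension permits and let $H_{B,\fc}$ be the last of them; then every strictly later $H_{A,\fa}\in\mathcal{G}$ has $A\ne B.$ It suffices to show $H_{B,\fc}$ commutes past each such $H_{A,\fa},$ so that it may be carried to the very end and blown down; iterating over the finitely many hull-$B$ diagonals then yields \eqref{ScatProd.31}. Writing $M_2$ for the manifold with all predecessors of $H_{B,\fc}$ blown up, I would run the trichotomy of Lemma~\ref{ScatProd.34}. If $A$ and $B$ are \ncnt,\ then $A\cap B\in\mathcal{C}'$ has already been blown up, so by Remark~\ref{ScatProd.166} the lifts of $A$ and $B,$ hence $H_{A,\fa}$ and $H_{B,\fc},$ are disjoint; if $A\pitchfork B,$ then $\fa$ and $\fc$ are transversal and so are the two diagonals. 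By maximality the only comparable possibility is $A\subsetneq B,$ and then \eqref{ScatProd.62} gives the \emph{exact} intersection $H_{A,\fa}\cap H_{B,\fc}=H_{A,\fa\Cup\fc}$ (since $A\cap B=A$); chain-closure applied to $H_{B,\fc}$ puts $H_{A,\fc}\in\mathcal{G},$ so face-closure puts $H_{A,\fa\Cup\fc}\in\mathcal{G},$ and this hull-$A$ diagonal precedes $H_{B,\fc}$ (its $\fb$-codimension is at least that of $\fc,$ and when equal $\codim A>\codim B$ places it earlier). Its prior blow-up disjoins the pair by Remark~\ref{ScatProd.166}, just as in \eqref{ScatProd.169}. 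Thus in every case $H_{A,\fa}$ and $H_{B,\fc}$ are transversal or disjoint in $M_2,$ which is what is needed to interchange them.

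The hard part, and the reason this cannot simply be quoted from Proposition~\ref{ScatProd.25}, is that $B\notin\mathcal{C}'$: the centre $H_{B,\fc}$ sits inside the \emph{unblown} face $B$ rather than in a front face, so the \ncnt\ step — which in Proposition~\ref{ScatProd.25} used that the hull $C\in\mathcal{C}$ had itself been blown up — must instead be powered by the stronger closure hypothesis, via $A\cap B\in\mathcal{C}'$ as noted above. I expect the main care to go into verifying that this substitution is legitimate at every stage and that no hull-$B$ blow-up is ever needed to separate the pairs, and then into the final blow-down itself: since $B$ is not blown up, $H_{B,\fc}$ is an interior p-submanifold of the boundary face $B,$ and carrying out its blow-up last and undoing it exhibits the single blow-down step, from which \eqref{ScatProd.31} follows by the induction of the second paragraph.
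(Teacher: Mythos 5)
Your proposal is correct and takes essentially the same route as the paper's own proof: reduce to commuting the last hull-$B$ diagonal (the one with $\fc$ of minimal codimension) past all later elements, powered by the two consequences of the boundary-face closure hypothesis that you isolate \mhy\ maximality of $B$ in $\mathcal{C}$ and $A\cap B\in\mathcal{C}'$ for every $A\in\mathcal{C}'$ \mhy\ with the comparable case $A\subsetneq B$ disposed of by the prior blow-up of $H_{A,\fa\Cup\fc},$ exactly as in the paper. Your explicit check that $\mathcal{G}'$ and the intermediate collections remain fc-closed, and your handling of the transversal case by persistence of transversality rather than disjointness, are minor elaborations of steps the paper leaves implicit, not a different argument.
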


\begin{proof} Under the hypotheses of the Proposition it suffices to show
  that there is a blow-down map 
\begin{equation}
[X^n;\mathcal{C}';\mathcal{G}]\longrightarrow [X^n;\mathcal{C}';\mathcal{L}]
\label{ScatProd.32}\end{equation}
where $\mathcal{G}=\mathcal{L}\cup\{H_{B,\mathfrak{b}}\}$ with
$H_{B,\mathfrak{b}}\in\mathcal{G}$ where $\mathfrak{b}$ is of minimal
codimension for the existence of such an element. The general case then
follows by iteration. Thus we need to show that $H_{B,\mathfrak{b}}$ can be
blown down, and to do this it needs to be commuted through the boundary
diagonal faces $H_{A,\mathfrak{a}}$ which come after it. We can take the
chain-order of $\mathcal{G}$ so that $H_{B ,\mathfrak{b}}$ is followed only
by elements $H_{C,\mathfrak{b}}$ with $\codim C<\codim B$ or by
$H_{A,\mathfrak{a}}$ with $\mathfrak{a}$ of smaller codimension than
$\mathfrak{b}.$

Consider the intersection properties of $H_{B,\fb}$ and $H_{A,\fa}$ in
$M=[X^n;\mathcal{C}']$ with the boundary diagonals preceding $H_{B,\fb}$
also blown up.  Certainly neither $A$ nor $C$ can contain $B$ since then
$\mathcal{C}'$ would not contain the boundary faces of all its elements. It
follows that $H_{C\cap B,\mathfrak{b}}$ has already been blown up so
$H_{B,\mathfrak{b}}$ and $H_{C,\mathfrak{b}}$ are disjoint, by
Remark~\ref{ScatProd.166}. So, consider a later element
$H_{A,\mathfrak{a}}\in\mathcal{G}$ where
$\codim(\mathfrak{a})<\codim(\mathfrak{b}).$ As already noted, $A\supset B$
is not possible. Thus either $A$ and $B$ are not comparable or $A\subset B$
strictly. In either first case, $A\cap B\in\mathcal{C}'$ has been blown up
so $H_{B,\fb}$ and $H_{A,\fa}$ are disjoint. In the second,
$H_{A,\fa\Cup\fb}$ has been blown up and again these manifolds are disjoint
in $M.$

Thus all the boundary diagonals following $H_{B,\fb}$ are disjoint from it
and commutation is possible, proving \eqref{ScatProd.32} and hence the
Proposition.
\end{proof}

\section{Scattering stretched projections}\label{Sect-Ssp}

For any compact manifold $X$ with connected boundary the n-fold scattering
space is defined has been defined in \S\ref{Scatcon}.

\begin{theorem}\label{ScatProd.94} The stretched boundary projection off
  any $n-l$ factors lifts to a uniquely defined smooth map which is a
  b-fibration giving a commutative diagramme with the vertical blow-down maps
\begin{equation}
\xymatrix{X^n_{\scat}\ar^{\pi_{\scat}}[r]\ar[d]&X^l_{\scat}\ar[d]\\
X^n_{\bo}\ar^{\pi_{\bo}}[r]\ar[d]&X^l_{\bo}\ar[d]\\
X^n\ar^{\pi}[r]&X^l.}
\label{ScatProd.95}\end{equation}
\end{theorem}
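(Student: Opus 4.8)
The plan is to reduce to the single case $l=n-1$ with the last factor projected off, and then to exhibit $X^n_{\scat}$ as an iterated blow up of $X^{n-1}_{\scat}\times X$; composing the resulting iterated blow-down map with the projection $X^{n-1}_{\scat}\times X\to X^{n-1}_{\scat}$ off the last factor will give $\pi_{\scat}$. The reduction is legitimate because Proposition~\ref{24.5.2008.165} provides the action of the symmetric group, so a projection off $n-l$ factors decomposes into a chain of one-factor projections, each conjugate to the case treated; a composite of b-fibrations is again a b-fibration, and the three-storey diagram \eqref{ScatProd.95} is assembled by stacking the single-factor diagrams on top of the already-constructed $\pi_{\bo}$ of Proposition~\ref{ScatProd.5}. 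Writing $\cB=\cB^\ver\cup\cB^\nver$ and $\caH=\caH_{\ver,\ver}\cup\caH_{\nver,\ver}\cup\caH_{\nver,\nver}$ as in the Introduction, and using that Proposition~\ref{ScatProd.86} makes the chain-order immaterial, the starting identities are
\begin{equation*}
X^n_{\scat}=[X^n_{\bo};\caH]\Mand X^{n-1}_{\scat}\times X=[X^n;\cB^\ver;\caH_{\ver,\ver}].
\end{equation*}
The key combinatorial observation I would record first is that if $\fa$ is vertical then \emph{every} element of $\fa$ is vertical, since a boundary face larger than a vertical face still has $X$ in its last factor; consequently every transversal family occurring in $\caH_{\ver,\ver}\cup\caH_{\nver,\ver}$ lies entirely in $\cB^\ver$, so the non-vertical faces can be removed without disturbing these diagonals.

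The construction of the blow-down map then proceeds in two stages. In the first I would peel off the purely non-vertical diagonals: applying Proposition~\ref{ScatProd.25} repeatedly, dropping one $H_{C,\fc}\in\caH_{\nver,\nver}$ at a time in reverse chain-order, produces a blow-down map $X^n_{\scat}\to[X^n_{\bo};\caH_{\ver,\ver}\cup\caH_{\nver,\ver}]=[X^{n-1}_{\bo}\times X;\cB^\nver;\caH_{\ver,\ver}\cup\caH_{\nver,\ver}]$. In the second stage I would eliminate $\cB^\nver$ together with $\caH_{\nver,\ver}$ by an induction over $\cB^\nver$ that at each step selects the largest (smallest-codimension) surviving non-vertical face $B$: Proposition~\ref{ScatProd.26} commutes $B$ past the surviving boundary diagonals and blows it down — its hypotheses apply precisely because no surviving diagonal involves $B$ in its transversal family, by the vertical/non-vertical observation above — and then Proposition~\ref{ScatProd.29} blows down the boundary diagonals $H_{B,\fa}$ whose boundary hull is the now-unblown $B$. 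Iterating exhausts $\cB^\nver$ and $\caH_{\nver,\ver}$ and leaves $[X^n;\cB^\ver;\caH_{\ver,\ver}]=X^{n-1}_{\scat}\times X$, giving the desired iterated blow-down map.

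Finally, $\pi_{\scat}$ is the composite of this blow-down map with the product projection $X^{n-1}_{\scat}\times X\to X^{n-1}_{\scat}$. Both factors are b-submersions — blow-down maps always are, and so is a projection off a product factor — hence $\pi_{\scat}$ is a b-submersion; commutativity of \eqref{ScatProd.95} with $\pi_{\bo}$ and $\pi$ is automatic, since every step covers the identity on the interior. To conclude that $\pi_{\scat}$ is a b-fibration it remains only to check b-normality, which I would verify face by face exactly as in Proposition~\ref{ScatProd.5}: each boundary hypersurface of $X^n_{\scat}$ is the lift of an original hypersurface of $X^n$, or a front face of a boundary-face blow up, or a front face of a boundary-diagonal blow up, and in each case one reads off locally that it maps into a single boundary hypersurface of $X^{n-1}_{\scat}$ or onto the interior.

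The main obstacle will be the verification, at every stage of the two inductions, that the precise hypotheses of Propositions~\ref{ScatProd.25}, \ref{ScatProd.26} and \ref{ScatProd.29} really hold for the intermediate collections. In particular one must confirm that the truncated tails remain fc-closed (cf.\ Remark~\ref{24.5.2008.121}), that the working subfamilies of $\cB^\nver$ stay closed under non-transversal intersection for Proposition~\ref{ScatProd.26} and under passage to boundary faces for Proposition~\ref{ScatProd.29} as faces are removed one at a time, and that the side conditions \eqref{ScatProd.60} and \eqref{ScatProd.28} controlling which diagonals \emph{involve} the face being blown down are met — the two closure hypotheses being distinct is exactly what makes alternating the propositions delicate. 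The geometric content of each step is modest; it is the consistent bookkeeping of the vertical/non-vertical combinatorics through the peeling process that is the hard part.
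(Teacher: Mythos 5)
Your construction of the iterated blow-down map follows the paper's own proof essentially step for step: the same reduction via Proposition~\ref{24.5.2008.165} to the case $l=n-1$ with the last factor projected off, the same three-part decomposition of the boundary diagonals, the removal of $\caH_{\nver,\nver}$ by repeated application of Proposition~\ref{ScatProd.25} in reverse chain-order, and then the same alternating induction with Propositions~\ref{ScatProd.26} and~\ref{ScatProd.29}, peeling off at each stage the largest surviving non-vertical boundary face and then the boundary diagonals whose hull it is. Up to the analogue of \eqref{ScatProd.101} your argument and the paper's coincide, and the caveats you list at the end (fc-closure of truncated tails, closure of the shrinking face collections, the side conditions \eqref{ScatProd.60} and \eqref{ScatProd.28}) are exactly the points the paper checks.

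The gap is in your final step, where you conclude that $\pi_{\scat}$ is a b-submersion because ``blow-down maps always are'' b-submersions. This is false, and it contradicts \S\ref{mwc} of the paper: blow-down maps are b-submersions for blow-ups of \emph{boundary faces}, but not for other p-submanifolds. The elements of $\caH_{\nver,\nver}$ are boundary diagonals---interior p-submanifolds of boundary faces, not boundary faces---so the first stage of your construction, the iterated blow-down $X^n_{\scat}\longrightarrow[X^n_{\bo};\caH_{\ver,\ver}\cup\caH_{\nver,\ver}],$ is \emph{not} a b-submersion, and therefore neither is your composite map $X^n_{\scat}\longrightarrow X^{n-1}_{\scat}\times X.$ (Your second stage is unproblematic in this respect: the centres there, the elements of $\cB^{\nver}$ and of $\caH_{\nver,\ver},$ lift to boundary faces of the intermediate spaces in which they are blown up, so those blow-downs are b-submersions.) The conclusion you want is still true, but for the reason the paper gives: one must observe that the directions in which the b-differential of the $\caH_{\nver,\nver}$ blow-downs fails to be surjective are exactly the ones annihilated by the subsequent projection off the factor of $X,$ so that it is only the full composite $X^n_{\scat}\longrightarrow X^{n-1}_{\scat},$ and not the map to $X^{n-1}_{\scat}\times X,$ that is a b-submersion. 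This is not a dispensable subtlety: it is the reason the theorem produces a b-fibration onto $X^{l}_{\scat}$ while the intermediate blow-down maps are never b-fibrations, and your proof needs this argument (together with the b-normality check, which you correctly model on Proposition~\ref{ScatProd.5}) in order to close.
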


\begin{proof} It suffices to prove this in case $l=n-1$ with the last
  factor projected off and then iterate; the lower part of the diagramme
  has already been constructed in Proposition~\ref{ScatProd.5}. We proceed
  to show there is an iterated blow-down map 
\begin{equation}
\beta :X^n_{\scat}\longrightarrow X^{n-1}_{\scat}\times X
\label{ScatProd.96}\end{equation}
starting from the corresponding map for the boundary stretched products 
\begin{equation}
\beta :X^n_{\bo}\longrightarrow X^{n-1}_{\bo}\times X.
\label{ScatProd.97}\end{equation}
The scattering n-fold stretched product is obtained from $X^n_{\bo}$ by
blowing up, in chain-order, all the $H_{A,\mathfrak{b}}.$ Each
$\mathfrak{b}\subset\cB_{(2)}$ either has $\cap\mathfrak{b}\subset
X^{n-1}\times\pa X,$ or not, and similarly either $A\subset X^{n-1}\times
\pa X$ or not. This leads to the decomposition
\begin{equation}
\cH_{*,*}=\cH_{\ver,\ver}\cup\cH_{\nver,\ver}\cup\cH_{\nver,\nver}
\label{ScatProd.98}\end{equation}
where the first part corresponds to those $H_{A,\mathfrak{b}}$ with
$\cap\mathfrak{b}\subset X^{n-1}\times\pa X$ (and hence also $A\subset
X^{n-1}\times\pa X,$ the second part to those with $A\subset
X^{n-1}\times\pa X$ but $\cap\mathfrak{b}\cap X^{n-1}\times(X\setminus\pa
X)\not=\emptyset$ and the third part being the remainder. Note that the
implied order here is very far from a chain-order.

Since $\cH_{\ver,\ver}$ is just the lift from $X^{n-1}_{\bo}$ of all the
boundary diagonals there 
\begin{equation}
[X^n;\mathcal{B}_{(2)}^{\ver};\cH_{\ver,\ver}]=X^n_{\scat}\times X
\label{24.5.2008.154}\end{equation}
which is the space we wish to map to. Thus we need to show 
that the elements of $\cH_{\nver,\nver}$ and $\cH_{\nver,\ver}$ can all be
blown down.

Proposition~\ref{ScatProd.25} applies directly to the elements of
$\cH_{\nver,\nver}.$ Thus we define successive subsets of the set of all
boundary diagonals by dropping the elements of $\cH_{\nver,\nver},$
starting with the last, in an overall chain-order. Then \eqref{ScatProd.27}
holds for each $\mathcal{G}$ so constructed as do the two conditions in
\eqref{ScatProd.60}. Thus, the elements of $\cH_{\nver,\nver}$ can indeed
be blown down in reverse order and we have shown the existence of an
iterated blow down map 
\begin{equation}
X^n_{\scat}\longrightarrow [X^n;\cB_{(2)};\cH_{\ver,\ver}\cup\cH_{\nver,\ver}].
\label{24.5.2008.155}\end{equation}

Now, we order the boundary faces of $X^n$ by taking first the elements of
$\cB_{(2)}^{\ver},$ products of the boundary faces of $X^{n-1}$ with $X,$ in
size-order, and then the remainder which form $\cB_{(2)}^{\nver}$ in a
size-order. Overall this is an intersection order so
$X^{n}_{\bo}=[X^{n-1}_{\bo}\times X;\mathcal{B}_{(2)}^{\ver}].$

Propositions~\ref{ScatProd.26} and \ref{ScatProd.29} can now be applied to
show the existence of successive blow-down maps
\begin{equation}
[X^{n-1}_{\bo}\times X;\mathcal{B};\cH_{\ver,\ver};\cH_{\nver,\ver}]\longrightarrow
[X^{n-1}_{\bo}\times X;\mathcal{B}^{\nver}(j);\cH_{\ver,\ver};\cH_{\nver,\ver}(j)]
\label{ScatProd.100}\end{equation}
where $\mathcal{B}^{\nver}(j)\subset\mathcal{B}^{\nver}$ is obtained by
removing the last element $j$ elements and $\cH_{\nver,\ver}(j)\subset
\cH_{\nver,\ver}$ is obtained by removing all those $H_{B,\ver}$
corresponding to the $B\in\mathcal{B}^{\nver}\setminus\mathcal{B}^{\nver}(j).$
This can be accomplished inductively, so it suffices to show that we can
pass from the space on the right in \eqref{ScatProd.100} for $j$ to the
same space for $j+1.$ Thus, the objective is to remove the last element of
$\cB^{\nver}_{(2)};$ Proposition~\ref{ScatProd.26} shows that this is
possible. Thus there is a blow down map 
\begin{equation}
[X^{n-1}_{\bo}\times X;\mathcal{B}^{\nver}(j);\cH_{\ver,\ver};\cH_{\nver,\ver}(j)]
\longrightarrow
[X^{n-1}_{\bo}\times X;\mathcal{B}^{\nver}(j+1);\cH_{\ver,\ver};\cH_{\nver,\ver}(j)]
\label{24.5.2008.156}\end{equation}
for each $j.$ Now, Proposition~\ref{ScatProd.29} can then be applied to `remove,'
i\@.e\@. blow down, all elements of
$\cH_{\nver,\ver}(j)\setminus\cH_{\nver,\ver}(j);$ these are of the form
$H_{B,\mathfrak{b}}$ where
$\cB_{(2)}^{\nver}(j+1)=\cB_{(2)}^{\nver}(j)\cup\{B\}.$ Thus, by alternating
between Propositions~\ref{ScatProd.26} and \ref{ScatProd.29} we have shown
the existence of an iterated blow-down map 
\begin{equation}
X^{n}_{\scat}\longrightarrow [X^{n-1}_{\bo}\times
  X;\cH_{\ver,\ver}]=X^{n-1}_{\scat}\times X
\label{ScatProd.101}\end{equation}
as desired. Composing with the projection shows the existence of a b-map
$\pi_{\scat}$ giving the commutative diagramme of smooth maps \eqref{ScatProd.95}.

By construction the stretched projection is a b-map. To show that it is a
b-submersion we proceed backwards through the construction above. Starting
from $X^{n-1}_{\scat}\times X$ all of the blow-ups of elements of
$\cH_{\nver,\ver}$ and $\cB^{\nver}_{(2)}$ are of boundary faces. Thus
these blow-ups are indeed b-submersions. However, the remaining blow-ups,
of the elements of $\cH_{\nver,\nver}$ are not of boundary faces and so
these are not b-submersions. Rather it is only the map back to
$X^{n-1}_{\scat},$ i\@.e\@. after the projection off the factor of $X$ that
is a b-submersion.
\end{proof}

\section{Vector spaces}\label{Vspaces}

As one indication of the claimed `universality' of these
scattering-stretched products alluded to in the Introduction we consider
here the case of a vector space, $V$ over $\bbR.$ As noted in the
Introduction the `model' asymptotic translation-invariance structure for a
general manifold with boundary is the radial compactification
$\overline{V}$ of $V$ to a ball. This has useful analytic properties, for
instance the space $S^0(V)$ of classical (1-step) symbols on $V$ of order
$0$ is identified with $\CI(\overline{V}).$ The question we consider here
is the extension of the difference map 
\begin{equation}
\delta :V\times V\ni(u,v)\longmapsto u-v\in\overline{V}
\label{24.5.2008.125}\end{equation}
and its higher variants 
\begin{equation}
\delta _{ij}:V^n\overset{\pi_{ij}}\longrightarrow V\times V\overset{\delta
}\longrightarrow \overline{V}.
\label{24.5.2008.126}\end{equation}

\begin{proposition}\label{24.5.2008.127} The difference map 
  \eqref{24.5.2008.125} extends to a b-fibration $\tilde\delta
  :(\overline{V})^2_{\scat}\longrightarrow \overline{V}$ and the higher
  maps \eqref{24.5.2008.126} similarly extend to b-fibrations 
\begin{equation}
\tilde{\delta }_{ij}:(\overline{V})^n_{\scat}\longrightarrow \overline{V}
\label{24.5.2008.128}\end{equation}
for all $i\not=j.$ 
\end{proposition}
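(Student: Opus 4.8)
The plan is to reduce everything to the case $n=2$ and then resolve the difference explicitly near the boundary.

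\emph{Reduction to $n=2$.} By Proposition~\ref{24.5.2008.165} the symmetric group acts on $(\overline{V})^n_{\scat}$ through lifts of the factor exchanges, so it suffices to treat one pair, say $i=n-1,$ $j=n,$ and then conjugate by a permutation to reach an arbitrary $(i,j).$ The map $\delta_{ij}$ factors as $\delta\circ\pi_{ij},$ where $\pi_{ij}\colon V^n\to V\times V$ is the projection onto the chosen pair. By Theorem~\ref{ScatProd.94} the stretched projection off the remaining $n-2$ factors lifts $\pi_{ij}$ to a b-fibration $\pi_{\scat}\colon(\overline{V})^n_{\scat}\to(\overline{V})^2_{\scat}.$ Granting the case $n=2,$ namely a b-fibration $\tilde\delta\colon(\overline{V})^2_{\scat}\to\overline{V}$ covering $\delta,$ the composite $\tilde\delta\circ\pi_{\scat}$ covers $\delta_{ij}$ and is again a b-fibration, since b-fibrations compose. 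Thus $\tilde\delta_{ij}=\tilde\delta\circ\pi_{\scat}$ is the required extension, and all the work is in the case $n=2.$

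\emph{The case $n=2$.} Here $\cB_{(2)}$ is the single corner $\pa\overline{V}\times\pa\overline{V},$ so $(\overline{V})^2_{\bo}$ is its blow up and, by Definition~\ref{24.5.2008.163}, $(\overline{V})^2_{\scat}=[(\overline{V})^2_{\bo};H]$ with $H=\tilde\Diag\cap\ff$ the lift of the diagonal meeting the b-front face. There are four boundary hypersurfaces: the two old faces $\mathrm{lb},\mathrm{rb},$ the b-front face $\ff,$ and the scattering front face $\mathrm{sf}$ created by blowing up $H.$ Writing $x=1/|z|$ for the boundary defining function of $\overline{V},$ I would verify directly that $\tilde\delta$ is smooth across each face and that $\tilde\delta^*x=a\,\rho_{\mathrm{lb}}\rho_{\mathrm{rb}}\rho_{\ff}$ with $a>0$ smooth, i.e.\ the exponent is $1$ on $\mathrm{lb},\mathrm{rb},\ff$ and $0$ on $\mathrm{sf}.$ Because $\overline{V}$ has a single boundary hypersurface, b-normality is automatic; so only smoothness and the b-submersion property need checking.

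\emph{Core computation.} Near $\mathrm{lb}$ and $\mathrm{rb}$ the statement reduces to the fact that translation $z\mapsto z-v$ by a fixed $v$ extends to a scattering diffeomorphism of $\overline{V},$ whence $z_1-z_2\to\infty$ smoothly in the direction of the unbounded variable and $x\circ\tilde\delta\sim\rho_{\mathrm{lb}}$ (resp.\ $\rho_{\mathrm{rb}}$). The essential point is the behaviour near $\ff$ and $\mathrm{sf}.$ Using $z_j=\omega(y_j)/x_j$ and the projective variables $t=x_1+x_2,$ $s=x_1/t$ on $\ff,$ one has
\[
z_1-z_2=\tfrac1t\bigl(\omega(y_1)/s-\omega(y_2)/(1-s)\bigr),
\]
which is nonvanishing off the diagonal, giving a smooth map into $\pa\overline{V}$ with $x\circ\tilde\delta\sim t=\rho_{\ff}.$ On $H$ the bracket vanishes ($s=\tfrac12,$ $y_1=y_2$), and the key observation is that $t(z_1-z_2)$ is smooth and vanishes to first order on $H,$ its linearization in the normal variables $(s-\tfrac12,\,y_1-y_2)$ being an isomorphism onto $V$ (this is Lemma~\ref{ScatProd.53} in concrete form). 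Blowing up $H$ introduces the radial variable $\rho_{\mathrm{sf}}$ with $t(z_1-z_2)=\rho_{\mathrm{sf}}\cdot(\text{smooth nonvanishing }V\text{-valued function});$ dividing by the lift $\hat t=t/\rho_{\mathrm{sf}}$ of the $\ff$-defining function shows that $z_1-z_2$ extends smoothly across $\mathrm{sf}$ as a submersion onto a neighbourhood of the interior of $\overline{V},$ with $x\circ\tilde\delta\sim\hat t=\rho_{\ff}$ and no power of $\rho_{\mathrm{sf}}.$ This fixes the exponents above. Surjectivity of the b-differential is clear in the interior, comes from the exponent $1$ in the b-normal direction on $\mathrm{lb},\mathrm{rb},\ff,$ and from the submersivity just established on $\mathrm{sf};$ hence $\tilde\delta$ is a b-submersion and, b-normality being free, a b-fibration. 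The main obstacle is exactly this computation near $\mathrm{sf}$: one must confirm that blowing up precisely the boundary diagonal $H$ resolves the $0/0$ indeterminacy of $z_1-z_2$ at the diagonal-at-infinity to a smooth $\overline{V}$-valued submersion, and that this picture glues smoothly to the one on $\ff$ across the corner $\ff\cap\mathrm{sf}.$
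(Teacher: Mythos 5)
Your proposal is correct and follows essentially the same route as the paper's own proof: reduction of $\tilde{\delta}_{ij}$ to the case $n=2$ by composing with the stretched projection b-fibration of Theorem~\ref{ScatProd.94}, and then an explicit computation in projective coordinates near the corner of $(\overline{V})^2$ showing that the lifted difference map is singular exactly along the boundary diagonal, which the scattering blow up resolves. The paper dismisses the $n=2$ computation as ``elementary'' where you carry out the verification of smoothness, the boundary exponents and b-normality in detail (your only slip, that $t(z_1-z_2)=\rho_{\mathrm{sf}}\cdot G$ with $G$ \emph{nonvanishing}, is harmless: $G$ vanishes where the lifted diagonal meets the scattering front face, but smoothness and submersivity of $G/\hat t$ persist there), so the two arguments coincide in substance.
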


\begin{proof} The second result follows from the first and the properties
  of $(\overline V)^n_{\scat}$ since $\tilde{\delta}_{ij}=\tilde\delta
  \circ\pi_{ij,\scat}$ where $\pi_{ij,\scat}$ is the scattering stretched
  projection corresponding to $\pi_{ij}.$ As the product of two b-fibrations
  it is also a b-fibration. The proof that $\delta$ extends to a
  b-fibration is elementary. First, the difference extends smoothly to
  either $\overline{V}\times V$ or $V\times\overline{V}$ so it suffices to
  consider a small neighbourhood of $(\pa \overline{V})^2$ in
  $(\overline{V})^2.$ This is of the form $[0,1)^2\times\bbS^{n-1}$ with
    the boundary variables being inverted radial variables, so the
    difference, written in terms of inverted polar coordinates in
    $\overline{V}$ is
\begin{equation}
((x,\omega ),(s,\theta))\longmapsto (|s\omega -x\theta|,\frac{\omega/x
    -\theta/s}{|\omega/x -\theta/s|}).
\label{24.5.2008.129}\end{equation}
The blow up to $(\overline{V})^{2}_{\bo}$ replaces $x,s$ by $T=x+s$ and
$y=\frac{x-s}{x+s}$ so the difference map becomes 
\begin{equation}
X=T|(1-y)\omega -(1+y)\theta|,\
\phi =\frac{(1-y)\omega -(1+y)\theta}{|(1-y)\omega -(1+y)\theta|}\in\bbS^{n-1}.
\label{24.5.2008.130}\end{equation}
This is smooth away from $T=0,$ $\omega =\theta$ and the scattering blow
up, precisely of this set, resolves the singularity.
 \end{proof}

As an example consider the following result on `multilinear convolution'
examining integrals of the form 
\begin{equation}
\int_{V^n}a_{0,1}(z_0-z_1)\cdots a_{0,n}(z_0-z_n)a_{1,2}(z_1-z_2)\cdots
  a_{n-1,n}(z_{n-1}-z_n)dz_1\ldots dz_n.
\label{24.5.2008.133}\end{equation}

\begin{corollary}\label{24.5.2008.131} If $a_{i,j}\in\rho
  ^{n+1}\CI(\overline{V})$ are classical symbols of order $-n-1$ for all
  $0\le i<j\le n$ and  
\begin{equation}
A=\prod_{0\le i<j\le n}\tilde \delta_{ij}^*a_{i,j}\in\CI((\overline
V)^{n+1}_{\scat}) 
\label{24.5.2008.132}\end{equation}
then $A$ pushes forward to a classical symbol, with logs, on $\overline{V}.$ 
\end{corollary}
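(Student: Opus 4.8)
The plan is to recognize \eqref{24.5.2008.133} as a push-forward along a scattering stretched projection and then invoke the fact, recorded in \S\ref{mwc}, that b-fibrations preserve polyhomogeneous conormality under push-forward (the Melrose push-forward theorem). Concretely, the integration over $z_1,\dots,z_n$ with $z_0$ held fixed is integration along the fibres of the projection $\pi:(\overline V)^{n+1}\to\overline V$ onto the zeroth factor; by the permutation symmetry of Proposition~\ref{24.5.2008.165} together with Theorem~\ref{ScatProd.94}, this lifts to a b-fibration $\pi_{\scat}:(\overline V)^{n+1}_{\scat}\to\overline V.$ Thus $\int_{V^n}A$ is, up to the choice of density, exactly $\pi_{\scat\,*}A,$ and the whole assertion reduces to an application of the push-forward theorem to a polyhomogeneous integrand.

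First I would check that the integrand $A$ of \eqref{24.5.2008.132} is polyhomogeneous on $(\overline V)^{n+1}_{\scat}.$ Each $\tilde\delta_{ij}$ is a b-fibration by Proposition~\ref{24.5.2008.127}, so each pullback $\tilde\delta_{ij}^*a_{i,j}$ is polyhomogeneous; since $a_{i,j}\in\rho^{n+1}\CI(\overline V)$ the pullback vanishes to the corresponding order on every boundary hypersurface that $\tilde\delta_{ij}$ carries to $\pa\overline V,$ and is smooth and non-vanishing elsewhere. The product $A$ is therefore polyhomogeneous, with an index family obtained by summing these individual contributions face by face. Next I would rewrite the Lebesgue density $dz_1\cdots dz_n$ as a b-density on $(\overline V)^{n+1}_{\scat},$ absorbing the Jacobian factors from the iterated blow-downs; this produces explicit boundary weights at each hypersurface.

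With the index family in hand, the core step is to apply the push-forward theorem to $A$ times this density. This requires that, at every boundary hypersurface of $(\overline V)^{n+1}_{\scat}$ which $\pi_{\scat}$ maps into the \emph{interior} of $\overline V$ (the faces that are genuinely integrated out), the total exponent be strictly positive, so that the fibre integral converges; the decay order $n+1$ of each $a_{i,j}$ is precisely what is needed to clear this threshold. The output is then polyhomogeneous conormal on $\overline V,$ with index set at $\pa\overline V$ given by the b-calculus extended union of the contributions of the faces lying over $\pa\overline V.$ Because all the exponents occurring here are real (integer-shifted), this extended union is again a discrete set of real exponents, but coincidences among them force logarithmic factors; hence the push-forward is a classical symbol \emph{with logs}, as claimed.

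The hard part will be the bookkeeping of the index family and the verification of the integrability threshold: one must enumerate the boundary hypersurfaces of $(\overline V)^{n+1}_{\scat}$ produced by the b-blow-ups and by the boundary-diagonal blow-ups, decide for each whether $\pi_{\scat}$ sends it to the interior or to $\pa\overline V,$ and compute the exponent of $A$ against the lifted density on each. The scattering faces — those encoding particle clusters escaping to infinity with bounded relative separations — are where this computation is most delicate, and it is exactly the coincidences of the resulting exponents there that produce the logarithmic terms in the final symbol.
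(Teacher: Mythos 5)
Your proposal is correct and follows exactly the route the paper intends: the corollary is stated there without a separate proof, as an immediate consequence of Proposition~\ref{24.5.2008.127}, of Theorem~\ref{ScatProd.94} together with the permutation symmetry of Proposition~\ref{24.5.2008.165} (which give the b-fibration $\pi_{\scat}$ off the factors being integrated), and of the push-forward theorem for b-fibrations recalled in \S\ref{mwc}. The bookkeeping you flag as the hard part \mhy\ rewriting the density, assembling the index family, and checking the integrability threshold at the faces mapped to the interior \mhy\ is precisely what the paper also delegates to that cited push-forward machinery, so your argument is the intended one.
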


\providecommand{\bysame}{\leavevmode\hbox to3em{\hrulefill}\thinspace}
\providecommand{\MR}{\relax\ifhmode\unskip\space\fi MR }
\providecommand{\MRhref}[2]{%
  \href{http://www.ams.org/mathscinet-getitem?mr=#1}{#2}
}
\providecommand{\href}[2]{#2}


\end{document}